\documentclass[a4paper,reqno]{amsart}
\usepackage[english]{babel}
\usepackage{amsmath,amsthm,amssymb,amsfonts}

\usepackage{enumitem}

\usepackage{hyperref}

\newtheorem{thm}{Theorem}[section]
\newtheorem{prp}[thm]{Proposition}
\newtheorem{cor}[thm]{Corollary}
\newtheorem{lem}[thm]{Lemma}
\theoremstyle{definition}
\newtheorem{dfn}[thm]{Definition}
\newtheorem{rem}[thm]{Remark}

\numberwithin{equation}{section}

\newcommand{\RR}{\mathbb{R}}
\newcommand{\CC}{\mathbb{C}}
\newcommand{\NN}{\mathbb{N}}
\newcommand{\ZZ}{\mathbb{Z}}

\newcommand{\Npos}{\mathbb{N}_+}
\newcommand{\Rpos}{\mathbb{R}^+}
\newcommand{\Rnon}{\mathbb{R}^+_0}

\newcommand{\tc}{\,:\,}
\newcommand{\defeq}{:=}
\newcommand{\eqdef}{=:}

\newcommand{\BigO}{\mathcal{O}}

\DeclareMathOperator{\Span}{span}

\newcommand{\sobolev}[2]{L^{#1}_{#2}}

\DeclareMathOperator*{\esssup}{ess\,sup}

\DeclareMathOperator{\supp}{supp}

\newcommand{\dist}{\varrho}
\newcommand{\weight}{\varpi}
\newcommand{\Vol}{\mathcal{V}}

\newcommand{\Mult}{W}

\newcommand{\vu}{u}
\newcommand{\uK}{K_0}

\newcommand{\Kern}{\mathcal{K}}
\newcommand{\opL}{\mathcal{L}}
\newcommand{\opH}{\mathfrak{H}}

\newcommand{\done}{{d_1}}
\newcommand{\dtwo}{{d_2}}
\newcommand{\jone}{j}
\newcommand{\jtwo}{k}

\newcommand{\pdeg}{d}
\newcommand{\hpdeg}{\sigma}

\newcommand{\scale}{\xi}
\newcommand{\FScale}{\Xi}
\newcommand{\ESum}{\Sigma}

\newcommand{\Pot}{\mathcal{P}}
\newcommand{\MPot}{\mathcal{MP}}
\newcommand{\GPot}{\mathfrak{P}}
\newcommand{\co}{\varkappa}

\newcommand{\even}{\mathrm{e}}
\newcommand{\convex}{\mathrm{c}}

\newcommand{\zosc}{\mathrm{osc}}
\newcommand{\ztrans}{\mathrm{trans}}
\newcommand{\zexp}{\mathrm{exp}}

\newcommand{\Ai}{\mathrm{Ai}}   % Airy
\newcommand{\Bi}{\mathrm{Bi}}

\DeclareMathOperator{\Four}{\mathcal{F}}   % Fourier transform

% Math symbol font matha
\DeclareFontFamily{U}{matha}{\hyphenchar\font45}
\DeclareFontShape{U}{matha}{m}{n}{
      <5> <6> <7> <8> <9> <10> gen * matha
      <10.95> matha10 <12> <14.4> <17.28> <20.74> <24.88> matha12
      }{}
\DeclareSymbolFont{matha}{U}{matha}{m}{n}
\DeclareFontSubstitution{U}{matha}{m}{n}

% Math symbol font mathb
\DeclareFontFamily{U}{mathx}{\hyphenchar\font45}
\DeclareFontShape{U}{mathx}{m}{n}{
      <5> <6> <7> <8> <9> <10>
      <10.95> <12> <14.4> <17.28> <20.74> <24.88>
      mathx10
      }{}
\DeclareSymbolFont{mathx}{U}{mathx}{m}{n}
\DeclareFontSubstitution{U}{mathx}{m}{n}

% Symbol definition
\DeclareMathDelimiter{\vvvert}{0}{matha}{"7E}{mathx}{"17}

\begin{document}

\title[Multiplier theorem for Grushin operators]{A robust approach to sharp multiplier theorems for Grushin operators}
\author{Gian Maria Dall'Ara}
\address[G.\ M.\ Dall'Ara]{Fakult\"at f\"ur Mathematik \\ Oskar--Morgenstern--Platz 1 \\ 1090 Vienna \\ Austria}
\email{gianmaria.dallara@univie.ac.at}
\author{Alessio Martini}
\address[A.\ Martini]{School of Mathematics \\ University of Birmingham \\ Edgbaston \\ Birmingham \\ B15 2TT \\ United Kingdom}
\email{a.martini@bham.ac.uk}

\thanks{
The first-named author was supported by the FWF-project P28154, while the second-named author was supported in part by the EPSRC Grant ``Sub-Elliptic Harmonic Analysis'' (EP/P002447/1). Part of this work was developed during a visit of the second-named author to the Mathematisches Seminar of the Christian--Albrechts--Universit\"at zu Kiel (Germany), made possible by the University's kind hospitality and the financial support of the Alexander von Humboldt Foundation.}

\keywords{Grushin operator, spectral multiplier, Bochner--Riesz mean, Schr\"odinger operator}
\subjclass[2010]{34L20, 35J70, 35H20, 42B15}

\begin{abstract}
We prove a multiplier theorem of Mihlin--H\"ormander type for operators of the form $-\Delta_x - V(x) \Delta_y$ on $\RR^\done_x \times \RR^\dtwo_y$, where $V(x) = \sum_{\jone=1}^\done V_\jone(x_\jone)$, the $V_\jone$ are perturbations of the power law $t \mapsto |t|^{2\hpdeg}$, and $\hpdeg \in (1/2,\infty)$. The result is sharp whenever $\done \geq \hpdeg \dtwo$. The main novelty of the result resides in its robustness: this appears to be the first sharp multiplier theorem for nonelliptic subelliptic operators allowing for step higher than two and perturbation of the coefficients.
 The proof hinges on precise estimates for eigenvalues and eigenfunctions of one-dimensional Schr\"odinger operators, which are stable under perturbations of the potential.
\end{abstract}

%\date{\today, \thistime. Preliminary version}

\maketitle

\section{Introduction}

\subsection{Setting and main result} Let $X$ be a measure space and $\opL$ a self-adjoint operator on $L^2(X)$. A Borel functional calculus for $\opL$ is defined via the spectral theorem and, for all bounded Borel functions $F : \RR \to \CC$, the operator
\[
F(\opL) = \int_\RR F(\lambda) \,dE(\lambda)
\]
is bounded on $L^2(X)$ (here $E$ is the spectral resolution of $\opL$). Boundedness of the ``spectral multiplier'' $F$ is in general not enough, however, to guarantee $L^p$-boundedness for $p \neq 2$ of the operator $F(\opL)$.

In the case $\opL = -\Delta$ is the Laplace operator on $\RR^n$, $L^p$-boundedness of $F(\opL)$ is related to smoothness properties of $F$. The Mihlin--H\"ormander multiplier theorem \cite{mihlin_multipliers_1956,hrmander_estimates_1960} indeed implies that $F(\opL)$ is of weak type $(1,1)$ and $L^p$-bounded for all $p \in (1,\infty)$ whenever $F$ satisfies the local scale-invariant smoothness condition
\begin{equation}\label{eq:mhcond}
\sup_{t > 0} \| F(t \cdot) \, \chi \|_{\sobolev{q}{s}(\RR)} < \infty
\end{equation}
for $q=2$ and some $s > n/2$; here $\sobolev{q}{s}(\RR)$ denotes the $L^q$ Sobolev space of fractional order $s$ and $\chi \in C^\infty_c((0,\infty))$ is any nontrivial cutoff. Strong $L^1$ boundedness of $F(\opL)$ does not hold in general under this assumption, but can be recovered, e.g., when $F$ is compactly supported and belongs to $\sobolev{2}{s}(\RR)$ for some $s > n/2$: this corresponds, e.g., to the $L^1$-boundedness of the Bochner--Riesz means $(1-t\opL)_+^\alpha$ whenever $\alpha > (n-1)/2$ and $t > 0$.

The smoothness condition $s>n/2$ in these results is sharp, in the sense that $n/2$ cannot be replaced by a smaller quantity (see, e.g., \cite{sikora_imaginary_2001} and references therein). In addition, the validity of these results has little to do with the symmetries of the Euclidean Laplace operator (such as homogeneity and translation-invariance): indeed analogous sharp results can be obtained in the case where $\opL$ is an elliptic self-adjoint (pseudo)differential operator on a compact manifold \cite{seeger_boundedness_1989}. Weakening the ellipticity assumption on $\opL$, on the other hand, turns out to be a more delicate issue and obtaining sharp multiplier theorems of Mihlin--H\"ormander type for nonelliptic operators $\opL$ is in general a challenging and widely open problem.

Interesting classes of nonelliptic differential operators with polynomial coefficients on Euclidean spaces were introduced in \cite{grushin_certain_1970}, including operators on $\RR^{\done}_x \times \RR^{\dtwo}_y$ of the form
\begin{equation}\label{eq:grushin_intro}
\opL= -\Delta_x - V(x) \Delta_y,
\end{equation}
where $\Delta_x = \sum_{\jone=1}^\done \partial_{x_\jone}^2$ and $\Delta_y = \sum_{\jtwo=1}^\dtwo \partial_{y_\jtwo}^2$ are the two ``partial Laplacians'' in $x$ and $y$, and $V(x) = |x|^{2\hpdeg}$ for some $\hpdeg \in \NN$.
If $\hpdeg > 0$, then the operator $\opL$ 
 is a ``degenerate elliptic operator'', in the sense that it is elliptic off the ``singular region'' $\{ (x,y) \tc x = 0\}$ where the coefficient $V(x)$ vanishes. Nevertheless the operator $\opL$ is hypoelliptic and satisfies subelliptic estimates: this follows from a celebrated result of H\"ormander's \cite{hrmander_hypoelliptic_1967}, since one can write $\opL$ as (minus) the sum of squares of a system of vector fields satisfying H\"ormander's bracket-generating condition (see, e.g., the discussion in \cite[Section 3]{robinson_grushin_2016} for details).
Moreover, this same condition allows one to associate with the Grushin operator $\opL$ a sub-Riemannian geometric structure (see, e.g., \cite{nagel_balls_1985,montgomery_tour_2002}). The corresponding Carnot--Carath\'eodory distance $\dist$ on $\RR^\done \times \RR^\dtwo$  satisfies the doubling condition
\[
\Vol(z,\lambda R) \leq C \, \lambda^Q \, \Vol(z,R)
\]
for some $C > 0$ and all $z \in \RR^\done \times \RR^\dtwo$, $R > 0$ and $\lambda \geq 1$; here $\Vol(z,R)$ denotes the (Lebesgue) measure of the $\dist$-ball of centre $z$ and radius $R$, while
\[
Q = \done + (1+\hpdeg) \dtwo
\]
is the so-called ``homogeneous dimension''. In addition, the Grushin operator $\opL$ satisfies Gaussian-type heat kernel bounds, as well as finite propagation speed for the corresponding wave equation, relative to the distance $\dist$. All these properties are indeed shown in \cite{robinson_analysis_2008} for a broad class of Grushin-type operators, including operators of the form \eqref{eq:grushin_intro} where the coefficient $V$ is only assumed to be a nonnegative measurable function such that
\begin{equation}\label{eq:rs_cond}
\co^{-1} |x|^{2\hpdeg} \leq V(x) \leq \co |x|^{2\hpdeg}
\end{equation}
for some constant $\co \geq 1$ and a (possibly fractional) exponent $\hpdeg \in (0,\infty)$. As a consequence, due to general results of \cite{hebisch_functional_1995,cowling_spectral_2001,duong_plancherel-type_2002}, a multiplier theorem of Mihlin--H\"ormander type holds for $\opL$, yielding weak type $(1,1)$ and $L^p$-boundedness for $p\in(1,\infty)$ of an operator of the form $F(\opL)$ whenever the condition \eqref{eq:mhcond} with $q=\infty$ and some $s > Q/2$ is satisfied; correspondingly, $L^1$-boundedness of Bochner--Riesz means $(1-t\opL)_+^\alpha$ is obtained whenever $\alpha > Q/2$ \cite[Section 8.2]{robinson_analysis_2008}.

The smoothness condition $s>Q/2$ may appear as the natural analogue of the condition $s>n/2$ for the Laplace operator on $\RR^n$ (or a more general elliptic operator on an $n$-manifold): indeed, the homogeneous dimension $Q$ is a natural dimensional parameter for the geometry associated with the Grushin operator $\opL$. Differently from the elliptic case, however, the condition $s > Q/2$ need not be sharp. The mismatch between the homogeneous dimension and the ``sharp Mihlin--H\"ormander threshold'' for a nonelliptic subelliptic operator was first discovered in the case of a homogeneous left-invariant sub-Laplacian on a Heisenberg group \cite{hebisch_multiplier_1993,mueller_spectral_1994}; in that case it was shown that the condition \eqref{eq:mhcond} with $q=2$ and $s>d/2$ is enough (and sharp), where $d$ is the topological dimension of the group.  After that discovery, a number of results were obtained for subelliptic operators in a variety of settings (and especially in the case of homogeneous sub-Laplacians on $2$-step stratified groups), improving on the condition $s>Q/2$ and often showing that $s>d/2$ is enough (see, e.g., the discussion in \cite{martini_necessary_2016}).

It should be noted that, in the case $V(x) = |x|^{2\hpdeg}$ with $\hpdeg \in \NN$, the Grushin operator $\opL$ defined in \eqref{eq:grushin_intro} can be lifted to a homogeneous left-invariant sub-Laplacian on a stratified group of step $\hpdeg+1$, and a number of properties of $\opL$ can be deduced from the analysis of the corresponding sub-Laplacian \cite{rothschild_hypoelliptic_1976}. In particular, when $\hpdeg = 1$ and $\dtwo = 1$, the Grushin operator $\opL$ corresponds to a sub-Laplacian on a Heisenberg group. Note that the lifting procedure increases the dimension of the underlying space, hence a sharp multiplier theorem for the sub-Laplacian need not directly imply a sharp result for the corresponding Grushin operator. Nevertheless the mentioned results for Heisenberg and related groups make it plausible that the general multiplier theorem for Grushin operators of \cite{robinson_analysis_2008} may be improved.

Indeed in \cite{martini_grushin_2012,martini_sharp_2014} the case $V(x) = |x|^2$ was treated for all values of $\done$ and $\dtwo$, proving that the condition \eqref{eq:mhcond}
for $q=2$ and some $s > (\done+\dtwo)/2$ is enough to guarantee the weak-type $(1,1)$ and $L^p$-boundedness for $p\in(1,\infty)$ of $F(\opL)$. Note that $\done+\dtwo$ is the topological dimension of $\RR^\done \times \RR^\dtwo$. A contraction argument \cite{mitjagin_divergenz_1974,kenig_divergence_1982} exploiting the ellipticity of $\opL$ off the singular region can be used to show that the condition $s > (\done+\dtwo)/2$ is sharp.
In addition, in \cite{chen_sharp_2013} the case $V(x) = \sum_{\jone=1}^\done |x_\jone|$ was considered, and a multiplier theorem with condition \eqref{eq:mhcond} for $q=2$ and some $s > \max\{\done+\dtwo,3\dtwo/2\}/2$ was proved. By the contraction argument cited above, this result is sharp when $\done\geq\dtwo/2$.

Our main theorem is a significant generalization of these results. In order to state it, let us introduce the class $\Pot^{2\hpdeg}_{\convex\even}$ of convex even functions $U : \RR \to \RR$ which are of class $C^3$ away from the origin and satisfy the inequalities
\begin{align*}
\co^{-1} t^{2\hpdeg} \leq U(t) &\leq \co t^{2\hpdeg}, \\
\co^{-1} t^{2\hpdeg-1} \leq U'(t) &\leq \co t^{2\hpdeg-1}, \\
|U''(t)| &\leq \co t^{2\hpdeg-2}, \\
|U'''(t)| &\leq \co t^{2\hpdeg-3}
\end{align*}
for some constant $\co \geq 1$ and all $t>0$. Clearly the first of the above inequalities is analogous to \eqref{eq:rs_cond}; here in addition we require a control of the derivatives of $U$ with the corresponding derivatives of $|\cdot|^{2\hpdeg}$ up to order $3$. Here is our result.

\begin{thm}\label{thm:main}
Let $\hpdeg \in (1/2,\infty)$. Let $\opL$ be defined by \eqref{eq:grushin_intro}, where $V : \RR^\done \to \RR$ can be written as
\begin{equation}\label{eq:potential_sum}
V(x) = \sum_{\jone=1}^\done V_\jone(x_\jone),
\end{equation}
and $V_1,\dots,V_\done \in \Pot_{\convex\even}^{2\hpdeg}$. Let $D = \max\{\done+\dtwo,(1+\hpdeg)\dtwo\}$. Let $F : \RR \to \CC$ be a bounded Borel function. Then the following hold.
\begin{enumerate}[label=(\roman*)]
\item If $\supp F \subseteq [1/2,2]$ and $F \in \sobolev{2}{s}(\RR)$ for some $s > D/2$, then
\[
\sup_{t>0} \| F(t\opL) \|_{1 \to 1} \leq C_s \|F\|_{\sobolev{2}{s}(\RR)}.
\] 
\item If \eqref{eq:mhcond} is satisfied for $q=2$ and some $s > D/2$, then $F(\opL)$ is of weak-type $(1,1)$ and bounded on $L^p(\RR^\done \times \RR^\dtwo)$ for all $p \in (1,\infty)$, and moreover
\begin{align*}
\| F(\opL)\|_{p \to p} &\leq C_{s,p} \sup_{t>0} \|F(t \cdot) \, \chi\|_{\sobolev{2}{s}(\RR)},\\
\| F(\opL)\|_{L^1 \to L^{1,\infty}} &\leq C_{s} \sup_{t>0} \|F(t \cdot) \, \chi\|_{\sobolev{2}{s}(\RR)}.
\end{align*}
\item The Bochner--Riesz means $(1-t\opL)_+^\alpha$ are $L^p$-bounded for all $p\in [1,\infty]$ uniformly in $t>0$ whenever $\alpha > (D-1)/2$.
\end{enumerate}
\end{thm}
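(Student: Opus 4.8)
The plan is to reduce everything to a family of one-dimensional Schr\"odinger operators, to prove for the latter eigenvalue and eigenfunction estimates that are \emph{uniform} over the perturbation class $\Pot^{2\hpdeg}_{\convex\even}$, and then to run the standard singular-integral machinery available for operators with Gaussian heat kernel bounds. Conjugating $\opL$ by the partial Fourier transform $\Four$ in the $y$ variables turns $-\Delta_y$ into multiplication by $|\eta|^2$, so $\Four\opL\Four^{-1}$ is the direct integral over $\eta\in\RR^\dtwo$ of the Schr\"odinger operators $H(\eta)=-\Delta_x+|\eta|^2V(x)$ on $L^2(\RR^\done)$; by \eqref{eq:potential_sum}, $H(\eta)=\sum_{\jone=1}^{\done}\opH_\jone(|\eta|)$ with commuting summands $\opH_\jone(\mu)=-d^2/dt^2+\mu^2V_\jone(t)$ acting in $x_\jone$, each (for $\mu>0$) with discrete spectrum $\{\lambda_{\jone,n}(\mu)\}_{n\in\NN}$ and $L^2$-normalized eigenfunctions $\{\psi_{\jone,n}(\,\cdot\,;\mu)\}_{n\in\NN}$. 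Hence
\[
K_{F(\opL)}\big((x,y),(x',y')\big)=c_\dtwo\int_{\RR^\dtwo}e^{i(y-y')\cdot\eta}\sum_{\vec n\in\NN^\done}F\!\Big(\textstyle\sum_{\jone}\lambda_{\jone,n_\jone}(|\eta|)\Big)\prod_{\jone=1}^{\done}\psi_{\jone,n_\jone}(x_\jone;|\eta|)\,\psi_{\jone,n_\jone}(x'_\jone;|\eta|)\,d\eta ,
\]
and the problem becomes that of estimating this expression.

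The technical heart is the one-dimensional analysis. The combined rescaling $t\mapsto\mu^{-1/(\hpdeg+1)}t$ conjugates $\opH_\jone(\mu)$ to $\mu^{2/(\hpdeg+1)}(-d^2/dt^2+\widetilde V_\jone)$ with $\widetilde V_\jone$ again in $\Pot^{2\hpdeg}_{\convex\even}$ with the same $\co$, so it suffices to analyze $-d^2/dt^2+U$ uniformly over $U\in\Pot^{2\hpdeg}_{\convex\even}$. One needs: (a) two-sided Weyl-type bounds $\lambda_n\asymp(1+n)^{2\hpdeg/(\hpdeg+1)}$ with constants depending only on $\co$ and $\hpdeg$; (b) rapid decay of the eigenfunction $\psi_n$ outside the classically allowed region $I_n=\{t\tc U(t)\lesssim\lambda_n\}$ (of width $\asymp\lambda_n^{1/(2\hpdeg)}$), together with $L^p$ and local-Weyl-type bounds for the $\psi_n$ on $I_n$; (c) estimates for the $\mu$-derivatives of $\lambda_{\jone,n}(\mu)$ and $\psi_{\jone,n}(\,\cdot\,;\mu)$, needed to integrate by parts in $\eta$ in the kernel formula and thereby extract off-diagonal decay in $y-y'$. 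Convexity and evenness of $U$, together with the prescribed control of $U',U'',U'''$ by the derivatives of $|\cdot|^{2\hpdeg}$, are exactly what allows a WKB analysis — with an $\Ai$-function approximation near the single positive turning point $t_n$, $U(t_n)=\lambda_n$ — to be carried out with constants \emph{independent} of $U$; this is where the hypotheses defining $\Pot^{2\hpdeg}_{\convex\even}$ enter, and where the argument departs from the earlier solvable cases $\hpdeg=1$ (Hermite functions) and $V_\jone(t)=|t|$.

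Inserting the one-dimensional estimates into the kernel formula and integrating by parts in $\eta$, I would establish a weighted Plancherel estimate: for $F$ supported in a fixed compact subset of $(0,\infty)$ and every $\gamma\ge0$,
\[
\sup_{z'}\;\Vol(z',r)\int_{\RR^\done\times\RR^\dtwo}\big|K_{F(r^2\opL)}(z,z')\big|^2\big(1+r^{-1}\dist(z,z')\big)^{2\gamma}\,dz\;\le\;C_{\gamma}\,\|F\|_{\sobolev{2}{}(\RR)}^2 ,
\]
but — and this is the improvement over what Gaussian bounds alone give — with the sharper property that the admissible number $2\gamma$ of ``good'' derivatives is effectively controlled by $D=\max\{\done+\dtwo,(1+\hpdeg)\dtwo\}$ rather than by the homogeneous dimension $Q=\done+(1+\hpdeg)\dtwo$. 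One obtains this by splitting $z,z'$ according to the sizes of $|x|,|x'|$ against the $|\eta|$-dependent scale $|\eta|^{-1/(\hpdeg+1)}$: where $|x|$ is large, $\opL$ is a rescaled elliptic operator in $\done+\dtwo$ variables and one gets the exponent $\done+\dtwo$; where $|x|$ is small, after rescaling the $x$-variables range over a bounded set, so only the $\dtwo$ directions $y$ — each of homogeneous weight $1+\hpdeg$ — count, giving $(1+\hpdeg)\dtwo$; the larger of the two wins. Since $\opL$ also satisfies Gaussian-type heat kernel bounds, finite propagation speed for the wave equation, and the ambient space is doubling (recalled above from \cite{robinson_analysis_2008}, as $V$ obeys \eqref{eq:rs_cond}), the weighted Plancherel estimate combined with the standard singular-integral theory for such operators (in the spirit of \cite{duong_plancherel-type_2002,cowling_spectral_2001}) yields the weak-type $(1,1)$ and $L^p$ bounds of part (ii); the compactly supported refinement, which avoids the dyadic summation over the frequency decomposition of $F$, gives part (i). Finally part (iii) follows from part (i): writing $(1-\lambda)_+^\alpha=\sum_{k\ge0}\phi_k(\lambda)$ with $\phi_k$ supported in an interval of length $\sim2^{-k}$ near $\lambda=1$ one has $\|\phi_k\|_{\sobolev{2}{s}}\lesssim2^{k(s-1/2)}$, so part (i) gives $\|\phi_k(\opL)\|_{1\to1}\lesssim2^{k(s-1/2)}$ for all $s>D/2$; the series converges once $\alpha>(D-1)/2$, giving the $L^1$ bound, whence the $L^\infty$ bound by self-adjointness and all $L^p$ bounds, $p\in[1,\infty]$, by interpolation.

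The main obstacle is Step 2: obtaining eigenvalue and, especially, eigenfunction estimates for $-d^2/dt^2+U$ that are genuinely stable under the admissible perturbations of $|t|^{2\hpdeg}$, since explicit special functions are no longer available — and then ensuring, in the weighted Plancherel step, that these one-dimensional bounds (including those on the $\mu$-derivatives) are quantitatively strong enough to extract \emph{exactly} the exponent $D$, and not something larger. Once these are in place, the passage to the multiplier and Bochner--Riesz statements is a matter of invoking established machinery.
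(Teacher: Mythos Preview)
Your overall architecture --- partial Fourier transform, reduction to the one-parameter family of one-dimensional Schr\"odinger operators, uniform eigenfunction analysis via an Airy/WKB approximation near the turning point, a weighted Plancherel estimate, then standard Calder\'on--Zygmund machinery --- matches the paper's, and the reductions to parts (ii) and (iii) at the end are correct. Two substantive points, however, separate your sketch from the paper's proof; the second is a genuine gap.

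\textbf{Choice of weight.} You propose a Plancherel estimate with the full sub-Riemannian distance weight $(1+r^{-1}\dist(z,z'))^\gamma$, to be obtained by integrating by parts in $\eta$ --- whence your item (c), bounds on $\partial_\mu\psi_{\jone,n}(\cdot;\mu)$. The paper takes a cheaper route: the weight is only $|x|^\gamma$, and is handled not by integration by parts but by an $L^2$ \emph{Riesz transform bound} $\|V^k f\|_2\lesssim_k\|(\opH^V)^k f\|_2$ (proved inductively for all $k$ in the class $\MPot^{2\hpdeg}$), which converts multiplication by $|x|^\gamma$ into $\opL^{\gamma/(2\hpdeg)}|\vec T|^{-\gamma/\hpdeg}$ inside the unweighted Plancherel identity. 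Since $|x|/\max\{r,|x'|\}\lesssim 1+r^{-1}\dist(z,z')$, this weaker weight is still enough, after interpolating with the estimate coming from Gaussian bounds and applying Cauchy--Schwarz against an elementary integral, to lower the threshold from $Q/2$ to $D/2$. This completely avoids controlling $\partial_\mu$ of eigenfunctions; only $\partial_\mu$ of \emph{eigenvalues} is needed (for a change of variable $\scale\mapsto\lambda$), and that follows from a virial-type identity. Uniform bounds on $\partial_\mu\psi_{\jone,n}$ over $\Pot^{2\hpdeg}_{\convex\even}$ would be a separate nontrivial project.

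\textbf{Summation over $\vec n$.} After inserting pointwise eigenfunction bounds into the kernel, one must still sum over $\vec n\in\Npos^\done$; your sketch mentions ``local-Weyl-type bounds'' but does not say how the sum is controlled. This is where the paper's second, less predictable ingredient enters: a \emph{uniform Bohr--Sommerfeld formula} (with an absolute $\BigO(1)$ error, valid across the whole class) is used to prove that, at fixed energy $\lambda$, the transition points $(\tilde x_{n,\lambda})_\jone$ are separated by gaps $\gtrsim\lambda^{-1/2}$ as $n$ varies --- including, for $\done>1$, a nontrivial argument exploiting the componentwise monotonicity of $n\mapsto\FScale_n(\lambda)$ to handle the lattice-like set of transition vectors. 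Only with this separation can the Airy-type bounds $|\psi^\jone_{n_\jone}(x_\jone)|^2\lesssim|x_\jone-(\tilde x_{n,\lambda})_\jone|^{-1/2}$ be summed by comparison with an integral. Without such gap control the sum diverges, and your proposal does not supply a mechanism for it.
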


A few comments may help to clarify the scope of Theorem \ref{thm:main}.
\begin{enumerate}
\item The parameter $D$ is strictly less than the homogeneous dimension $Q$ for all values of $\done$ and $\dtwo$, and moreover our smoothness condition is expressed in terms of an $L^q$ Sobolev norm with $q=2$ instead of $q=\infty$; hence, when it applies, our result always yields an improvement to the general theorem of \cite{robinson_analysis_2008}.
\item We can treat the case $V(x) = \sum_{\jone=1}^\done |x_\jone|^{2\hpdeg}$ for all $\hpdeg \in (1/2,\infty)$. In particular we ``interpolate'' between the previously known results of \cite{martini_grushin_2012,martini_sharp_2014,chen_sharp_2013}, corresponding to $\sigma=1/2$ and $\sigma=1$, thus answering a question posed in \cite{chen_sharp_2013}. 
\item\label{en:scope_sharpness}  By the aforementioned contraction argument, our result is sharp whenever $\done \geq \hpdeg \dtwo$ (so that $D = \done+\dtwo$ is the topological dimension). This means that, for all values of $\hpdeg \in (1/2,\infty)$, we obtain a sharp multiplier theorem (with a suitable choice of $\done$ and $\dtwo$).
\item\label{en:scope_perturbation} We do not assume that $V$ is either algebraic or homogeneous: each of the summands in \eqref{eq:potential_sum} can be perturbed on $\Rpos$ in a scale-invariant $C^3$ fashion; in this sense, our result is ``perturbation-invariant''.
\end{enumerate}

We believe that points (\ref{en:scope_sharpness}) and (\ref{en:scope_perturbation}) above reveal the main significance and interest of our result: indeed, to the best of our knowledge, the previously known sharp multiplier theorems for sub-Laplacians and related subelliptic operators have only been obtained in the case of step at most $2$ (corresponding to $\sigma \leq 1$), and only for quite rigid classes of operators (with algebraic or analytic coefficients, and possessing a number of symmetries). While our assumptions on the operator $\opL$ are still somewhat restrictive, when compared to the assumptions in \cite{robinson_analysis_2008}, nevertheless they appear to be a substantial relaxation of those in the previously known sharp results; in these respects, our result may be considered as a step forward in the investigation of the ``sharp Mihlin--H\"ormander threshold'' for general subelliptic operators.

%\medskip

\subsection{Main ingredients of the proof and ties with mathematical physics}\label{sec:novel}
The main advantage of working with Grushin operators is that, exploiting their peculiar structure, precise information on their spectral theory and functional calculus can be obtained from the analysis of certain
families of Schr\"odinger operators.

Namely, if $\opL$ is as in \eqref{eq:grushin_intro} and $\Four f$ denotes the partial Fourier transform of $f \in L^2(\RR^\done_x \times \RR^\dtwo_y)$ in the variable $y$, then
\[
\Four \opL f(x,\eta) = \opL_{|\eta|^2} \Four f(x,\eta),
\]
where $\eta$ is the dual variable to $y$, and, for all $\scale \in (0,+\infty)$, $\opL_\scale$ is the Schr\"odinger operator on $\RR^\done_x$ given by
\[
\opL_\scale = -\Delta_x + \scale V(x).
\]
Under the ``decomposability assumption'' \eqref{eq:potential_sum}, we can further write $\opL_\scale$ as the sum of the one-dimensional Schr\"odinger operators
\[\label{intro-schrod}
\opL_{\jone,\scale} = -\partial_{x_\jone}^2 + \scale V_\jone(x_\jone)
\]
for $\jone=1,\dots,\done$, each of which acts on a different variable $x_\jone$. In the present paper, as well as in \cite{martini_grushin_2012,martini_sharp_2014,chen_sharp_2013}, a detailed analysis of these families of operators (for appropriate choices of $V_\jone$) allows one to prove certain ``weighted Plancherel estimates'' from which the corresponding sharp multiplier theorems follow by well-developed techniques. More precisely, proving weighted Plancherel estimates boils down to bounding from above certain sums involving eigenvalues and eigenfunctions of the operators $\opL_{\jone,\scale}$. 

In the case where $V$ is homogeneous, a substantial simplification occurs: indeed the operators $\opL_{\jone,\scale}$ are conjugate to one another via suitable scalings of the variable $x_\jone$, hence the problem reduces to the analysis of a single Schr\"odinger operator $\opL_{\jone,1}$ for each $\jone=1,\dots,\done$. The previously known sharp results for Grushin operators fall into this class: indeed, \cite{martini_grushin_2012,martini_sharp_2014} are based on the analysis of the ``harmonic oscillator'' $-\partial_t^2 + t^2$, while \cite{chen_sharp_2013} is based on the analysis of the ``anharmonic oscillator'' $-\partial_t^2 + |t|$. Moreover, in these particular cases, the eigenfunctions can be expressed in terms of special functions (namely, Hermite polynomials and the Airy function), for which a number of estimates are readily available in the literature.

When $V$ is not homogeneous, there seems to be no way to directly relate the spectral decompositions of the various $\opL_{\jone,\scale}$. Moreover, in the generality of the class of potentials $\Pot^{2\hpdeg}_{\even\convex}$, one cannot obtain exact expressions for eigenfunctions in terms of already-studied ``special functions''. Hence we have to work simultaneously with all the different Schr\"odinger operators $\opL_{\jone,\scale}$ and look for estimates with a suitable uniformity in the parameter $\scale$. In particular we need a precise understanding of the behaviour of eigenfunctions in the so-called \emph{semiclassical regime} $\scale\rightarrow+\infty$. A mathematical physics tool devised to deal with such a problem is the \emph{WKB approximation} (see, e.g., Chapter $15$ of \cite{hall-book} for an introduction), which, among other things, allows one to understand the behaviour of eigenfunctions in the most delicate region, that is, around the transition points that separate the ``classical region'', where the potential is smaller than the energy level, and its complement. A key role in this approximation is played by the Airy function. Despite the effectiveness of this procedure, it is far from obvious how to derive from it estimates that possess the kind of uniformity in the space variable, the energy and the values of the parameter $\scale$ that we need for our purposes.

In the present paper we follow a different route, resorting to a general method due to Olver \cite{olverbook} to obtain approximate representations of solutions to second-order ODEs with a ``simple turning point''. Thanks to this tool we obtain, after some work, estimates with the desired uniformities. Perhaps not surprisingly, Olver's representation of solutions involves again Airy functions.

Although Olver's method plays the central role in our analysis, a number of technical problems arise when one tries to derive our main result from the estimates of eigenfunctions one gets out of it. Here we would like to point out a few of the tools that we employ to deal with these difficulties:\begin{enumerate}
\item a uniform version (due to Hartman and Titchmarsh) of the so-called ``Bohr--Sommerfeld formula'', which we use in a somewhat unconventional way to estimate ``gaps'' between transition points corresponding to different energy levels and values of $\scale$; indeed, in the case $\done > 1$, we are led to studying the separation of lattice-like structures formed by ``vectors of transition points'' corresponding to the different Schr\"odinger operators $\opL_{\jone,\scale}$;
\item the $L^2$-boundedness of ``Riesz transforms of arbitrary order'' associated with one-dimensional Schr\"odinger operators with potentials in our class $\Pot^{2\hpdeg}_{\convex\even}$; while this appears to be known in particular cases, such as that of polynomial potentials (where the problem can be reduced via lifting to subelliptic estimates for homogeneous sub-Laplacians on stratified groups), a corresponding result in the generality we need does not seem to exist in the literature;
\item a virial-type bound for eigenfunctions (again for our class of potentials), which asserts that a significant fraction of the total energy comes from the potential term or, equivalently, that potential and kinetic energy of ``eigenstates'' are comparable (see, e.g., \cite{fock,weidmann,georgescu_virial_1999} for the classical virial theorem in quantum mechanics).
\end{enumerate}

%\medskip

\subsection{Open questions}

As already mentioned, our result is certainly not the definitive answer to the problem of obtaining sharp multiplier theorems for Grushin operators (or more general subelliptic operators), and a number of questions remain open. We would like to list a few of them.
\begin{enumerate}

\item We know that our result is sharp in the case $\done \geq \hpdeg \dtwo$, where the parameter $D$ coincides with the topological dimension $\done+\dtwo$. Can the result be improved when $\done < \hpdeg \dtwo$, and $D$ replaced with $\done+\dtwo$ in any case? The methods used in this paper, based on ``weighted Plancherel estimates'' with weights ``depending only on the variable $x$'', appear not to be suitable to obtain such a result when $\done \ll \dtwo$. On the other hand, from \cite{martini_sharp_2014} we know that this improvement is possible in the particular case of the ``harmonic oscillator'' Grushin operator. Namely, in \cite{martini_sharp_2014} a different method (with weights ``depending also on the variable $y$'') is developed, which however is based on special identities for Hermite polynomials; a challenging problem is whether a more robust version of this method can be applied in the generality of our assumptions.

\item Another question that might be investigated is whether the restriction $\hpdeg > 1/2$ is really necessary. Recall that the ``unperturbed case'' with $\hpdeg =1/2$, treated in \cite{chen_sharp_2013}, was based on the analysis of the anharmonic operator, whose eigenfunctions are expressed in terms of the Airy function. The fact that Olver's method gives an approximate expression of solutions to ODEs in terms of the Airy function somehow explains our restriction on $\hpdeg$: in order to obtain a uniform control of the error, we need a better local behaviour of the perturbed potential and its derivatives compared to the ``approximating potential'' $|\cdot|$ corresponding to the Airy function. However one may wonder whether there exist alternative methods that allow one to ``interpolate'' between our results and the elliptic case $\hpdeg = 0$.

\item Despite the ``perturbation-invariant'' character of our assumptions, the ``decomposability condition'' \eqref{eq:potential_sum} appears still to be a very strong structural assumption on $V$. It would be interesting to know whether it is possible to get rid of this assumption and consider ``genuinely multi-dimensional'' potentials $V$. This would clearly require changing considerably the techniques exploited in this work: for example, in trying to follow the approach sketched above, one would need to get precise estimates for eigenfunctions of multi-dimensional Schr\"odinger operators, a problem that is considerably harder than its one-dimensional analogue.

\item Finally, one could consider Grushin operators of the form \eqref{eq:grushin_intro} on $\RR^\done \times \RR^\dtwo$ as prototypes of more general degenerate elliptic operators on manifolds and ask whether similar sharp results can be obtained in this greater generality. In this vein, the simplest example one could think of is probably a sum-of-squares operator $\opL = -(X^2+Y^2)$ on a $2$-dimensional compact manifold, where the vector fields $X$ and $Y$ are allowed to vanish, but together with their commutator $[X,Y]$ span the tangent at each point: already for this apparently simple example the so-far available techniques do not appear to be enough to obtain a sharp multiplier theorem for $\opL$. The recent result \cite{casarino_grushinsphere}, devoted to the analysis of a particular Grushin-type operator on the $2$-sphere in $\RR^3$, indicates the possibility of treating such operators on compact manifolds; however the problem of obtaining a ``perturbation-invariant'' result in this context remains open.
\end{enumerate}

\subsection{Structure of the paper} 
\par In Section \ref{s:olver} we recall Olver's result on approximate solutions to ODEs with a simple turning point, and show how it can be used to prove estimates for square-integrable solutions on $\Rpos$ of the equation $u''=\alpha^2(U-1)u$, when $\alpha$ is a large positive parameter and $U$ is in one of our classes of potentials $\Pot^{2\hpdeg}_{\even\convex}$ (in fact, many results of this and the following sections hold in slightly greater generality). 
\par Next, in Section \ref{s:schroedinger} we prove a few results for one-dimensional Schr\"odinger operators with potentials in our classes: $L^2$-bounds of Riesz transforms of arbitrary order (Proposition \ref{apriori_scaled-prp}), the virial-type integral bound (Proposition \ref{virial-prp}), and precise pointwise estimates for eigenfunctions (Proposition \ref{eigenfunctions-prp}). Except for the bound on Riesz transforms, these proofs rely heavily on the estimates of Section \ref{s:olver}.
\par In Section \ref{s:rescaledschroedinger} we proceed to consider one-parameter families of Schr\"odinger operators of the form $-\partial_x^2+\scale V$ (again with $V$ in one of our classes). We prove bounds on eigenvalues, transition points, and derivatives with respect to $\scale$ of the eigenvalues (Proposition \ref{eigenvalues-prp}). We also state the Bohr--Sommerfeld formula with uniform error (Theorem \ref{titchmarsh-thm}). For the sake of completeness, we devote an Appendix to describing how the statement of Theorem \ref{titchmarsh-thm} follows from the arguments of \cite{titchmarsh}.
\par Finally, in Section \ref{s:grushin} we discuss the spectral theory and functional calculus of Grushin operators and we prove the weighted Plancherel estimates. The key steps are Lemma \ref{lem:transition_est} (where gaps between transition points are studied, crucially relying on the Bohr--Sommerfeld formula) and Proposition \ref{keytoweighted-prp} (where a pointwise estimate for the density of the ``Plancherel measure'' associated to Grushin operators is obtained). Finally, in Section \ref{ss:main}, we derive our main result, Theorem \ref{thm:main}.

\subsection{Notation}
We denote by $\NN$ (resp.\ $\Npos$, $\Rnon$, $\Rpos$) the set of natural numbers (resp. positive integers, nonnegative real numbers, positive real numbers).

\section{Analysis of $L^2$ solutions of $u''=\alpha^2(U-1)u$}\label{s:olver}

We begin with defining a class of functions on $\Rpos$ related to the class of potentials on $\RR$ featuring in Theorem \ref{thm:main}.

\begin{dfn}\label{pot-dfn}
For $\co,\pdeg \in \Rpos$, we denote by $\Pot^\pdeg_+(\co)$ the collection of $C^3$ functions $U : \Rpos \to \Rpos$ which satisfy
\begin{gather}
\co^{-1} x^{\pdeg} \leq U(x) \leq \co x^{\pdeg},\\
\co^{-1} x^{\pdeg-1} \leq U'(x) \leq \co x^{\pdeg-1},\\
|U''(x)| \leq \co x^{\pdeg-2}, \\
|U'''(x)| \leq \co x^{\pdeg-3}
\end{gather}
for all $x \in \Rpos$.
\end{dfn}

Since our estimates will depend on $U\in \Pot^\pdeg_+(\co)$ only through the parameters $\pdeg$ and $\co$, it is convenient to introduce the notation $A\lesssim B$ for the inequality $A\leq C B$, where $C$ is a positive constant that depends only on $\co$ and $\pdeg$. Accordingly, we write $A\simeq B$ when both $A\lesssim B$ and $B\lesssim A$ hold.

The goal of this section is to prove the following proposition, which will be crucial for the rest of our analysis.

\begin{prp}\label{trans-prp}
Let $\pdeg > 1$, $\co \in \Rpos$, and $U\in \Pot^\pdeg_+(\co)$. There exists $\alpha_0\simeq1$ such that if $\alpha\geq\alpha_0$ and $u$ is a solution of
\begin{equation}\label{ode-3}
u''(x)=\alpha^2  \, (U(x)-1) \, u(x) \qquad \forall x\in\Rpos
\end{equation}
such that $\int_{\Rpos} u^2<+\infty$, then the following pointwise estimates hold:
\begin{equation}\label{eq:olver_trans}
u(x)^2\lesssim|x-x_0|^{-1/2}\int_{\Rpos} u^2\qquad\forall x\in\Rpos
\end{equation}
and
\begin{equation}\label{eq:olver_unif}
u(x)^2\lesssim \alpha^{1/3} \int_{\Rpos} u^2\qquad\forall x\in\Rpos,
\end{equation}
where $x_0 \in \Rpos$ is uniquely defined by $U(x_0)=1$. Moreover, the following integral estimate holds:
\begin{equation}\label{eq:olver_int}
\int_{\Rpos}Uu^2\gtrsim \int_{\Rpos}u^2.
\end{equation}
\end{prp}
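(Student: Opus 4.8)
The plan is to derive Proposition~\ref{trans-prp} from Olver's representation theorem for second-order linear ODEs with a simple turning point \cite{olverbook}, applied with coefficient $f := U - 1$. First I would record the geometric facts that place \eqref{ode-3} in Olver's framework: since $U$ is continuous, strictly increasing on $\Rpos$, and comparable to $x^\pdeg$, there is a unique $x_0$ with $U(x_0) = 1$ and $x_0 \simeq 1$; moreover $f < 0$ on $(0,x_0)$, $f > 0$ on $(x_0,+\infty)$, the zero of $f$ at $x_0$ is \emph{simple} since $f'(x_0) = U'(x_0) \simeq x_0^{\pdeg-1} \simeq 1$, and $f(x) \simeq x^\pdeg$ as $x \to +\infty$. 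The least routine point is to verify that the error-control function $H$ attached to \eqref{ode-3} has total variation $\lesssim 1$ on all of $\Rpos$, with implied constant depending only on $\co$ and $\pdeg$. Near $x_0$ the apparent singularity of the relevant Schwarzian-type density at the turning point cancels, using that $f \in C^3$ there (automatic, as $x_0 \simeq 1 > 0$); near $+\infty$ the density decays fast enough because $f$ grows polynomially; and near the endpoint $x = 0$ the Liouville change of variables is regular (since $f \to -1$) and the density is there comparable to $|U''(t)|$, so its integrability reduces to $\int_0^1 |U''(t)|\,dt \lesssim \int_0^1 t^{\pdeg-2}\,dt < \infty$ --- which is precisely where the hypothesis $\pdeg > 1$ enters. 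I would then fix $\alpha_0 \simeq 1$ large enough that, for $\alpha \geq \alpha_0$, Olver's error terms (which carry a factor $\alpha^{-1}\,\mathrm{Var}(H)$) are at most half of the corresponding main term, both pointwise and in the integral sense needed below.

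Olver's theorem then furnishes a distinguished solution $w$ of \eqref{ode-3}, recessive at $+\infty$, of the form
\[
w(x) = (\zeta/f)^{1/4}\,\bigl(\Ai(\alpha^{2/3}\zeta(x)) + \epsilon(\alpha,x)\bigr),
\]
where $\zeta$ is the Liouville variable fixed by $\tfrac{2}{3}\zeta^{3/2} = \int_{x_0}^x\sqrt{f}$ for $x > x_0$ and $\tfrac{2}{3}(-\zeta)^{3/2} = \int_x^{x_0}\sqrt{-f}$ for $x < x_0$, and the error obeys a bound of the shape $|\epsilon(\alpha,x)| \lesssim \alpha^{-1}\,M(\alpha^{2/3}\zeta(x))$ with $M$ the Airy modulus function. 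Next I would observe that every $L^2(\Rpos)$ solution of \eqref{ode-3} is a scalar multiple of $w$: the coefficient $\alpha^2(U-1)$ extends continuously to $x = 0$ (as $U(0^+) = 0$), so all solutions extend continuously to $[0,+\infty)$ and are thus bounded, hence square-integrable, on $[0,x_0]$; while any solution not proportional to $w$ is dominant at $+\infty$, grows like $\exp(\alpha\int_{x_0}^x\sqrt{f})$, and hence fails to be square-integrable near $+\infty$. Since \eqref{eq:olver_trans}, \eqref{eq:olver_unif} and \eqref{eq:olver_int} are homogeneous in $u$, it suffices to prove them with $u$ replaced by $w$.

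The core of the argument is a change of variables. The map $x \mapsto \zeta$ has Jacobian $dx = (\zeta/f)^{1/2}\,d\zeta$, so that (up to the error) $w(x)^2\,dx$ becomes $(\zeta/f)\,\Ai(\alpha^{2/3}\zeta)^2\,d\zeta$, and the further rescaling $s = \alpha^{2/3}\zeta$ turns mass integrals into Airy integrals. Two batches of soft estimates are needed, all easy consequences of $U \in \Pot^\pdeg_+(\co)$: (i) $1 - U(t) \simeq x_0 - t$ on $(0,x_0)$ and $U(t) - 1 \simeq t - x_0$ on $(x_0,2x_0)$, whence $|\zeta(x)| \simeq |x - x_0|$ and $\zeta/f \simeq 1$ for $x \leq 2x_0$; (ii) $\zeta/f$ is continuous and strictly positive on $[\zeta(0),+\infty)$, comparable to $1$ on compact subsets, tending to $0$ as $x \to +\infty$ (so $\zeta/f \lesssim 1$ globally), with $\zeta(0) = -\zeta_*$ for a finite $\zeta_* \simeq 1$. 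Combining these with the standard Airy asymptotics $\Ai(s)^2 \lesssim (1+|s|)^{-1/2}$, $\int_{-T}^0\Ai(s)^2\,ds \simeq T^{1/2}$ for $T \gtrsim 1$, and super-exponential decay of $\Ai(s)^2$ as $s \to +\infty$, I would deduce both $\int_{\Rpos} w^2 \simeq \alpha^{-1/3}$ and the localized lower bound $\int_{x_0/2}^{x_0} w^2 \gtrsim \alpha^{-1/3}$.

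The three estimates then follow quickly. From the representation, $w(x)^2 \lesssim (\zeta/f)^{1/2}(1+\alpha^{2/3}|\zeta(x)|)^{-1/2}$ for $x \leq 2x_0$, while for $x \geq 2x_0$ (where $\alpha^{2/3}\zeta(x) \gtrsim \alpha^{2/3} \gtrsim 1$) the super-exponential decay of $\Ai$ makes $w(x)^2$ negligible against either right-hand side. Using (i), for $x \leq 2x_0$ this gives $w(x)^2 \lesssim (1 + \alpha^{2/3}|x-x_0|)^{-1/2} \lesssim \min\{1,\ \alpha^{-1/3}|x-x_0|^{-1/2}\}$, and dividing by $\int_{\Rpos} w^2 \simeq \alpha^{-1/3}$ yields \eqref{eq:olver_unif} and \eqref{eq:olver_trans}. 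For \eqref{eq:olver_int}, simply estimate
\[
\int_{\Rpos} U w^2 \ \geq\ \int_{x_0/2}^{x_0} U w^2 \ \geq\ U(x_0/2)\int_{x_0/2}^{x_0} w^2 \ \gtrsim\ \alpha^{-1/3} \ \simeq\ \int_{\Rpos} w^2,
\]
using $U(x_0/2) \geq \co^{-1}(x_0/2)^\pdeg \simeq 1$ and the localized lower bound. I expect the main obstacle to be the verification in the first paragraph: showing, uniformly over $U \in \Pot^\pdeg_+(\co)$, that Olver's error-control function has bounded total variation --- the behaviour near $x = 0$ being exactly what forces $\pdeg > 1$ --- together with the bookkeeping that allows the error terms to be absorbed once $\alpha \geq \alpha_0 \simeq 1$.
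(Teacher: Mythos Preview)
Your proposal is correct and follows essentially the same approach as the paper: Olver's representation for ODEs with a simple turning point, the uniform bound on the error-control integral (the paper's key lemma $J\lesssim 1$, with $\pdeg>1$ entering exactly where you indicate, near $x=0$), identification of the $L^2$ solution with the recessive Olver solution, the computation $\int_{\Rpos} w^2\simeq\alpha^{-1/3}$ via Airy asymptotics, and the derivation of \eqref{eq:olver_trans}--\eqref{eq:olver_int}. One minor slip: Olver's error bound for the recessive solution is $|\epsilon|\lesssim\alpha^{-1}\,M/E$ rather than $\alpha^{-1}\,M$ (with $E$ the second auxiliary function), and it is precisely the factor $1/E$ that makes $M/E\lesssim\Ai$ for large positive argument and justifies your pointwise absorption of the error into the main term there; this does not affect the overall argument.
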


For the rest of the section, we work with a fixed $U\in \Pot^\pdeg_+(\co)$, focusing on the uniformity of our estimates, that is, on the dependence of the implicit constants only on $\pdeg$ and $\co$. 

\subsection{Olver's approximate solutions of ODEs with one simple turning point}\label{olver-sec}

From Definition \ref{pot-dfn} it follows immediately that there exists a unique point $x_0 \in \Rpos$ where $U(x_0)=1$, and moreover $U'(x_0)\neq 0$. In the classical language of ODEs, the equation \eqref{ode-3} has a \emph{simple turning point}. Following \cite{olverbook} (in particular, see Section $3$ of Chapter $11$), we introduce the new independent variable $\zeta$, related to $x$ as follows:
\[
\zeta(x) = \begin{cases}
-\left(\frac{3}{2}\int_x^{x_0}\sqrt{1-U}\right)^{2/3},  &x\in(0,x_0],\\
\left(\frac{3}{2}\int_{x_0}^x\sqrt{U-1}\right)^{2/3},   &x\in[x_0,+\infty).
\end{cases}
\]
It is easily seen that $\zeta$ is a homeomorphism of $\Rpos$ onto $(-b, +\infty)$ and that it is $C^4$ on $\Rpos\setminus\{x_0\}$, where
\[
b=\left(\frac{3}{2}\int_0^{x_0}\sqrt{1-U}\right)^{2/3} \in \Rpos.
\] 
It is proved in \cite{olverbook} that $\zeta$ is in fact $C^3$ on the whole $\Rpos$, and we will have more to say about that in what follows.

We next recall the definition of \emph{Olver's auxiliary functions} $E$ and $M$. Let $c$ be the negative root of $\Ai(x)=\Bi(x)$ of smallest absolute value, where $\Ai$ and $\Bi$ are the Airy function of first and second kind respectively. Then
\[ 
E(x)=\begin{cases}1,  &x\leq c,\\
\sqrt{\Bi(x)/\Ai(x)}, &x\geq c,\end{cases}
\]
while
\[
M(x)=\begin{cases}\sqrt{\Ai(x)^2+\Bi(x)^2}, &x\leq c,\\
\sqrt{2\Ai(x)\cdot \Bi(x)}, & x\geq c.\end{cases}
\]

The last ingredient we need to state Olver's theorem is the \emph{Schwarzian derivative}
\begin{equation}\label{schwarzian}
\Phi(\zeta) \defeq \left(\frac{dx}{d\zeta}\right)^{1/2} \frac{d^2}{d\zeta^2} \left[\left(\frac{dx}{d\zeta}\right)^{-1/2}\right]\qquad(\zeta\in (-b,+\infty)).
\end{equation}

Here is finally Olver's result \cite[Chapter 11, Theorem 3.1]{olverbook}.

\begin{thm}\label{olver-thm}
If
\begin{equation}\label{eq:J}
J \defeq \int_{-b}^{+\infty}\frac{|\Phi(\zeta)|}{\sqrt{|\zeta|}} \,d\zeta <+\infty,
\end{equation}
then for all $\alpha \in \Rpos$ the equation \eqref{ode-3} has two global solutions $u_\alpha$ and $v_\alpha$ such that
\begin{align*}
u_\alpha(x) &=\left(\zeta'\right)^{-1/2}\left(\Ai(\alpha^{2/3}\zeta)+\epsilon_\alpha(x)\right),\\
v_\alpha(x) &=\left(\zeta'\right)^{-1/2}\left(\Bi(\alpha^{2/3}\zeta)+\eta_\alpha(x)\right),
\end{align*}
where
\begin{align*}
|\epsilon_\alpha(x)| &\leq \frac{1}{\lambda}\frac{M(\alpha^{2/3}\zeta)}{E(\alpha^{2/3}\zeta)}\left(e^{\lambda\alpha^{-1}J}-1\right),\\
|\eta_\alpha(x)| &\leq \frac{1}{\lambda}M(\alpha^{2/3}\zeta)E(\alpha^{2/3}\zeta)\left(e^{\lambda\alpha^{-1}J}-1\right).
\end{align*}
\end{thm}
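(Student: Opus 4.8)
The plan is to apply Olver's Theorem~\ref{olver-thm} (with $f = U-1$, which has a simple zero at $x_0$) and read off all three estimates from the approximate solution $u_\alpha$. The first task is to verify the hypothesis \eqref{eq:J}; in fact I would prove the quantitative bound $J \lesssim 1$. Granting this, for $\alpha \geq \alpha_0 \simeq 1$ one has $\alpha^{-1}J \lesssim 1$, so $e^{\lambda\alpha^{-1}J}-1 \lesssim \alpha^{-1}$ and the error terms $\epsilon_\alpha,\eta_\alpha$ of Theorem~\ref{olver-thm} are small; using that $\Bi(y)\to+\infty$ exponentially as $y\to+\infty$ one sees that $v_\alpha\notin L^2(\Rpos)$ while $u_\alpha\in L^2(\Rpos)$ (it is bounded near $0$, where $U\to 0$, and decays exponentially at $+\infty$), and since $u_\alpha,v_\alpha$ are linearly independent, every $L^2(\Rpos)$ solution of \eqref{ode-3} is a scalar multiple of $u_\alpha$. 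As all three inequalities are invariant under $u\mapsto cu$, it then suffices to prove them for $u=u_\alpha$, with $\int_{\Rpos}u_\alpha^2$ on the right-hand side.

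To bound $J$: writing $h(\zeta)=dx/d\zeta=1/\zeta'$, the definition \eqref{schwarzian} unwinds to $\Phi = \tfrac{3}{4}(h'/h)^2 - \tfrac{1}{2}h''/h$, while the identity $\zeta\,(\zeta')^2 = U-1$ (immediate from the definition of $\zeta$) gives $h^2 = \zeta/(U(x(\zeta))-1)$; together these express $\Phi$ explicitly through $U,U',U''$ and $\zeta$. I would then estimate $|\zeta|^{-1/2}|\Phi(\zeta)|$ separately for $\zeta$ near $0$, near $-b$, and large. Near the turning point $U(x(\zeta))-1$ vanishes simply; expanding it in powers of $\zeta$ one checks that the $\zeta^{-2}$ and $\zeta^{-1}$ singularities of the four terms of $\Phi$ cancel, so that $\Phi$ stays bounded, with a bound controlled by $U'(x_0),U''(x_0),U'''(x_0)$ and hence $\lesssim 1$ by Definition~\ref{pot-dfn} and $x_0\simeq 1$ --- this is where the $C^3$-regularity of $\zeta$ at $x_0$ and the hypothesis on $U'''$ come in. For $\zeta$ large, $dx/d\zeta \simeq \zeta^{(1-\pdeg)/(\pdeg+2)}$, whence $\Phi = \BigO(\zeta^{-2})$ and $|\zeta|^{-1/2}|\Phi|$ is integrable at $+\infty$. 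Near $\zeta=-b$ (i.e.\ $x\to 0^+$) one has $U(x(\zeta))-1\to-1$ and $x(\zeta)\simeq \zeta+b$, and since $U'(x)\lesssim x^{\pdeg-1}\to 0$ the only possibly unbounded term of $\Phi$ is the one carrying $U''$, giving $|\Phi(\zeta)|\lesssim (\zeta+b)^{\pdeg-2}$; as $|\zeta|\simeq 1$ there, $|\zeta|^{-1/2}|\Phi|$ is integrable near $-b$ \emph{precisely because $\pdeg>1$}. Summing the three contributions yields $J\lesssim 1$.

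For the pointwise bounds I would first record a handful of elementary facts about $\zeta$ that follow from Definition~\ref{pot-dfn} and its definition (each requiring a short argument): $x_0\simeq 1$ and $b\simeq 1$; the signed identity $\zeta\,(\zeta')^2 = U-1$; $\zeta'\simeq 1$ on a neighbourhood of $x_0$ of size $\simeq 1$ and $\zeta'\gtrsim 1$ on all of $\Rpos$ (here $\pdeg>1$ is used at $+\infty$); and $|U(x)-1|\gtrsim|x-x_0|$ for every $x\in\Rpos$ (by the mean value theorem near $x_0$, using $U'\simeq 1$ there, and by $U(x)\simeq x^\pdeg$ with $\pdeg>1$ away from it). Combining $|\Ai(y)|\le M(y)/E(y)$ with the error bound of Theorem~\ref{olver-thm}, $E\ge 1$, and the known asymptotics $M(y)^2\simeq(1+|y|)^{-1/2}$ (valid for all $y$, from the standard Airy expansions), one gets, for $\alpha\ge\alpha_0$,
\[
u_\alpha(x)^2 \ \lesssim\ (\zeta'(x))^{-1}\,\bigl(1 + \alpha^{2/3}|\zeta(x)|\bigr)^{-1/2};
\]
invoking $(\zeta')^{-1}\lesssim 1$ and $(\zeta')^{-1}|\zeta|^{-1/2} = |U-1|^{-1/2}\lesssim|x-x_0|^{-1/2}$, this becomes $u_\alpha(x)^2\lesssim\min\{1,\ \alpha^{-1/3}|x-x_0|^{-1/2}\}$.

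Finally, to produce the matching lower bound on the $L^2$ norm, I would change variables to $\zeta$ on $(x_0/2,x_0)$ --- where $h=1/\zeta'\simeq 1$ --- and, controlling the error via $|\epsilon_\alpha|\lesssim\alpha^{-1}M(\alpha^{2/3}\zeta)$, use the identity $\int\Ai(s)^2\,ds = s\,\Ai(s)^2-\Ai'(s)^2$ and the oscillatory Airy asymptotics (which give $\int_{-R}^0\Ai(s)^2\,ds\simeq R^{1/2}$ for large $R$) to obtain
\[
\int_{x_0/2}^{x_0}u_\alpha^2 \ \gtrsim\ \alpha^{-2/3}\int_{-c\alpha^{2/3}}^0\Ai(s)^2\,ds \ -\ \BigO(\alpha^{-7/3}) \ \gtrsim\ \alpha^{-1/3}
\]
for $\alpha\ge\alpha_0$ with $\alpha_0\simeq 1$ large enough, where $c=|\zeta(x_0/2)|\simeq 1$; conversely, the exponential decay of $\Ai$ in the classically forbidden region gives $\int_{\Rpos}u_\alpha^2\lesssim\alpha^{-1/3}$. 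Feeding $\int_{\Rpos}u_\alpha^2\gtrsim\alpha^{-1/3}$ into the pointwise bound proves \eqref{eq:olver_trans} and \eqref{eq:olver_unif}; and since $U\gtrsim 1$ on $[x_0/2,x_0]$ (Definition~\ref{pot-dfn}),
\[
\int_{\Rpos}U u_\alpha^2 \ \ge\ \int_{x_0/2}^{x_0}U u_\alpha^2 \ \gtrsim\ \int_{x_0/2}^{x_0}u_\alpha^2 \ \gtrsim\ \alpha^{-1/3} \ \gtrsim\ \int_{\Rpos}u_\alpha^2,
\]
which is \eqref{eq:olver_int}. I expect the main obstacle to be the uniform bound $J\lesssim 1$: one has to handle both the simple turning point, where several terms of $\Phi$ blow up but cancel so that only the $C^3$-data of $U$ at $x_0$ survive, and the singular endpoint $x\to 0^+$, where $\pdeg>1$ is exactly what is needed to make the Schwarzian integrable.
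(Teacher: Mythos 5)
Your proposal does not prove the statement it was asked to prove. Theorem \ref{olver-thm} is Olver's approximation theorem itself: the assertion that, under the sole hypothesis $J<+\infty$, equation \eqref{ode-3} admits two global solutions whose leading terms are $(\zeta')^{-1/2}\Ai(\alpha^{2/3}\zeta)$ and $(\zeta')^{-1/2}\Bi(\alpha^{2/3}\zeta)$, with errors controlled by the auxiliary functions $M$ and $E$ and by the factor $e^{\lambda\alpha^{-1}J}-1$. Your opening sentence is ``apply Olver's Theorem~\ref{olver-thm}'', so as a proof of that theorem the argument is circular. What you actually establish is Proposition \ref{trans-prp}, together with the supporting Lemma \ref{J-lem} and Propositions \ref{notL2-prp} and \ref{L2-prp} --- that is, the consequences the paper derives \emph{from} Theorem \ref{olver-thm}. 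For the record, the paper does not prove Theorem \ref{olver-thm} either: it imports it verbatim from \cite[Chapter 11, Theorem 3.1]{olverbook}. A genuine proof would require the Liouville transformation (the change of variable $x\mapsto\zeta$ and of unknown $W=(\zeta')^{1/2}u$, turning \eqref{ode-3} into $d^2W/d\zeta^2=(\alpha^2\zeta+\Phi(\zeta))W$), the construction of $u_\alpha$ and $v_\alpha$ as solutions of Volterra-type integral equations with kernels built from $\Ai$ and $\Bi$, and the summation of the resulting Picard iteration, whose convergence and error bounds are precisely what the error-control function (here crudely majorized by $J$) and the functions $E$, $M$ are designed to quantify. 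None of this appears in your write-up.

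That said, the material you do supply tracks the paper's subsequent use of the theorem quite closely: the uniform bound $J\lesssim 1$ obtained by splitting the integral at the turning point, at the singular endpoint $x\to 0^+$ (where $\pdeg>1$ is exactly what makes the Schwarzian integrable), and at infinity; the recessive/dominant dichotomy for $u_\alpha$ and $v_\alpha$; the two-sided estimate $\int_{\Rpos}u_\alpha^2\simeq\alpha^{-1/3}$; and the pointwise bounds via $\Ai\leq M/E$, $E\geq 1$ and $M(y)\lesssim|y|^{-1/4}$. Had the target been Proposition \ref{trans-prp}, your sketch would be essentially the paper's argument, the main difference being that the paper organizes the cancellation at the turning point through the auxiliary function $\beta(x)=(U(x)-1)/(x-x_0)$, Proposition \ref{id-prp} and Lemma \ref{tech-lem}, rather than a direct expansion of $\Phi$ in powers of $\zeta$. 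As it stands, however, the statement actually posed remains unproved.
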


Here $\lambda$ is a positive universal constant whose exact value will play no role for us. Notice that the bounds on $\epsilon$ and $\eta$ in \cite{olverbook} are expressed in terms of a more precise \emph{error-control function} which we trivially bound by the constant $J$.

Summarizing, we associated to our potential $U\in\Pot_+^\pdeg(\co)$ the following objects: the turning (or transition) point $x_0$, the new variable $\zeta$, the quantities $b$ and $J$, and the family of solutions $\left(u_\alpha\right)_{\alpha>0}$ and $(v_\alpha)_{\alpha>0}$, which will be later shown to be respectively recessive and dominant (that is, square-integrable and not square-integrable).

\subsection{A useful lemma} We discuss here a technical lemma that will be needed later. First of all, fix $0<y_0<y_1$. If $\gamma>-1$, and $f:(0,y_1)\rightarrow \RR$ is a continuous function, we define 
\[
g_{y_0,y_1}^\gamma(f)(x) \defeq
\begin{cases}
(y_0-x)^{-\gamma-1}\int_x^{y_0}(y_0-y)^\gamma f(y) \,dy, & x\in (0,y_0),\\
(x-y_0)^{-\gamma-1}\int_{y_0}^x(y-y_0)^\gamma f(y) \,dy, & x\in (y_0,y_1).
\end{cases}
\] 

Next, given $\rho\leq0$ and $K\in\Rpos$, we denote by $\mathcal{C}^\rho_{y_1}(K)$ the collection of $C^2$ functions $f:(0,y_1)\rightarrow\Rpos$ such that
\[
K^{-1}\leq f(x)\leq K, \quad |f'(x)|\leq K, \quad |f''(x)|\leq Kx^\rho\qquad\forall x \in(0,y_1).
\]
If $\rho=0$, the last inequality just means that $f''$ is bounded on the whole interval, while if $\rho<0$ it is allowed to be singular at $0$.

\begin{lem}\label{tech-lem} Let $y_0, y_1, K, \rho$ be as above, and $f,g\in \mathcal{C}^\rho_{y_1}(K)$.
\begin{enumerate}[label=(\roman*)]
\item\label{en:tech1} There exists $\widetilde{K}=\widetilde{K}(K,\rho,y_1)$ such that $f\cdot g\in \mathcal{C}^\rho_{y_1}(\widetilde{K})$.
\item\label{en:tech2} For every $r\in\RR$ there exists $\widetilde{K}=\widetilde{K}(K,\rho,y_1,r)$ such that $f^r\in \mathcal{C}^\rho_{y_1}(\widetilde{K})$.
\item\label{en:tech3} For every $\gamma>-1$ there exists $\widetilde{K}=\widetilde{K}(K,\rho,y_0,y_1,\gamma)$ such that
\[
g^\gamma_{y_0,y_1}(f)\in \mathcal{C}^\rho_{y_1}(\widetilde{K}).
\]
\end{enumerate}
\end{lem}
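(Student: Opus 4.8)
The plan is to verify the three closure properties of the class $\mathcal{C}^\rho_{y_1}(K)$ essentially by direct computation, checking each of the three defining inequalities (the two-sided bound, the bound on $f'$, and the weighted bound on $f''$) in turn. Throughout, the only quantitative inputs are the two-sided bound $K^{-1}\leq f\leq K$ and the derivative bounds; the length $y_1$ of the interval enters because $|f'|$ and $|f''|$ being bounded (on a bounded interval) lets one convert between different orders of smallness near $0$ via the elementary estimate $x^\rho \leq y_1^{\rho}$ does \emph{not} hold for $\rho<0$, so one must be a little careful and instead use that $x^\rho$ is decreasing while, e.g., $1 \leq y_1^{-\rho} x^{\rho}$ on $(0,y_1)$ when $\rho \le 0$, i.e. a bounded function is also $\BigO(x^\rho)$ up to a constant depending on $y_1$ and $\rho$.

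For part \ref{en:tech1}, the product $f\cdot g$ clearly satisfies $K^{-2}\leq fg\leq K^2$; for its derivative $(fg)' = f'g + fg'$ one bounds $|f'g + fg'| \leq K\cdot K + K\cdot K = 2K^2$; and for the second derivative $(fg)'' = f''g + 2f'g' + fg''$ one bounds $|f''g| \leq K x^\rho \cdot K$, $|fg''| \leq K\cdot K x^\rho$, and the cross term $|2f'g'| \leq 2K^2$, which by the remark above is $\lesssim x^\rho$ with an implied constant depending on $K,\rho,y_1$. For part \ref{en:tech2}, since $f$ takes values in the compact interval $[K^{-1},K]\subset\Rpos$, the function $t\mapsto t^r$ is smooth there with all derivatives bounded in terms of $K$ and $r$; then $(f^r)' = r f^{r-1} f'$ is bounded by a constant times $|f'|\leq K$, and $(f^r)'' = r(r-1)f^{r-2}(f')^2 + r f^{r-1} f''$ is controlled by a constant times $\max\{(f')^2,|f''|\}$, hence by a constant times $\max\{K^2, Kx^\rho\}$, which is $\lesssim x^\rho$. (Alternatively one may invoke the Fa\`a di Bruno / chain rule bookkeeping, but for derivatives only up to order two the direct computation is shortest.)

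Part \ref{en:tech3} is the one requiring the most care and I expect it to be the main obstacle, since one is differentiating a weighted average $g^\gamma_{y_0,y_1}(f)$ whose kernel is singular at the endpoint $y_0$. The cleanest approach is to change variables so the average becomes an honest average over $(0,1)$: on $(y_0,y_1)$ write, with $y = y_0 + s(x-y_0)$ for $s\in(0,1)$,
\[
g^\gamma_{y_0,y_1}(f)(x) = (\gamma+1)\int_0^1 s^\gamma\, f\bigl(y_0+s(x-y_0)\bigr)\,ds \cdot \frac{1}{\gamma+1} \cdot (\gamma+1),
\]
i.e. $g^\gamma_{y_0,y_1}(f)(x) = (\gamma+1)\int_0^1 s^\gamma f(y_0 + s(x-y_0))\,ds$ after absorbing the normalizing factor (one should double-check the constant $\gamma+1$ against the definition, since $\int_0^1 s^\gamma\,ds = (\gamma+1)^{-1}$), and similarly on $(0,y_0)$ with $y = y_0 - s(y_0-x)$. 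In this form the two-sided bound $K^{-1}\leq g^\gamma_{y_0,y_1}(f) \leq K$ is immediate from $K^{-1}\leq f \leq K$; differentiation in $x$ now passes under the integral sign onto $f$, giving $\frac{d}{dx}g^\gamma_{y_0,y_1}(f)(x) = (\gamma+1)\int_0^1 s^{\gamma+1} f'(y_0+s(x-y_0))\,ds$, which is bounded by $\sup|f'|\leq K$ (up to the $\gamma$-dependent constant), and a second differentiation gives $(\gamma+1)\int_0^1 s^{\gamma+2} f''(y_0+s(x-y_0))\,ds$. Here one uses $|f''(y_0+s(x-y_0))| \leq K (y_0+s(x-y_0))^\rho$; since $\rho\leq 0$ and the argument is $\geq s(x-y_0)$ (on the branch $x>y_0$) or $\geq y_0 - y_0 = $ well away from $0$ when $x$ is near $y_0$, one splits according to whether $x$ is close to or far from $0$. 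When $x$ is bounded away from $0$ the argument $y_0+s(x-y_0)$ stays in a compact subset of $\Rpos$, so $f''$ is simply bounded and hence $\lesssim x^\rho$; when $x\to 0^+$ the argument is $\geq$ a fixed multiple of $x$ over the relevant range of $s$, so $(y_0+s(x-y_0))^\rho \lesssim x^\rho$ since $\rho\le 0$, and the $s^{\gamma+2}$ weight (with $\gamma+2>1>0$) keeps the integral convergent. Either way $\bigl|\frac{d^2}{dx^2} g^\gamma_{y_0,y_1}(f)(x)\bigr| \lesssim x^\rho$ with an implied constant depending only on $K,\rho,y_0,y_1,\gamma$, and the continuity/$C^2$ regularity of $g^\gamma_{y_0,y_1}(f)$ across and up to $y_0$ follows from this same representation (the potential singularity of the original kernel at $y_0$ is resolved by the change of variables). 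The one genuinely delicate point is matching the left and right branches at $x=y_0$ and confirming $C^2$ there, which the averaged representation makes transparent since both branches extend to $x=y_0$ with the same value $f(y_0)$ and matching derivatives.
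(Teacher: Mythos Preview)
Your argument is correct. Parts \ref{en:tech1} and \ref{en:tech2} are the same direct computations as in the paper. For part \ref{en:tech3} the paper proceeds by integration by parts, obtaining the recursion $\frac{d}{dx}g^\gamma_{y_0,y_1}(f)=g^{\gamma+1}_{y_0,y_1}(f')$ and iterating; your change of variables $y=y_0+s(x-y_0)$ gives the unified representation
\[
g^\gamma_{y_0,y_1}(f)(x)=\int_0^1 s^\gamma f\bigl(y_0+s(x-y_0)\bigr)\,ds
\]
(note: no extra factor of $\gamma+1$, as you suspected), and differentiating under the integral recovers exactly the paper's recursion in disguise. Your version has the advantage that the two branches $x<y_0$ and $x>y_0$ are given by a single formula, so the $C^2$ matching at $y_0$ is automatic rather than requiring a separate check.

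Your bound on the second derivative is slightly over-argued: since $y_0+s(x-y_0)$ lies between $x$ and $y_0$ for every $s\in[0,1]$, one has directly $(y_0+s(x-y_0))^\rho\leq x^\rho$ when $x<y_0$ and $(y_0+s(x-y_0))^\rho\leq y_0^\rho\leq (y_0^\rho/y_1^\rho)\,x^\rho$ when $x>y_0$; no near/far split in $x$ is needed, and the weight $s^{\gamma+2}$ plays no role in convergence (the integrand is bounded on $[0,1]$ for each fixed $x$).
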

A couple of comments may be useful.
\begin{enumerate}
\item The functions in the class $\mathcal{C}^\rho_{y_1}(\widetilde{K})$ are positive, thus their real powers appearing in part \ref{en:tech2} are well-defined.
\item Part \ref{en:tech3} above implicitly states that $g^\gamma_{y_0,y_1}(f)$, initially defined for $x\neq y_0$, has a $C^2$ extension to the whole interval.  
\end{enumerate}

\begin{proof}
The proofs of parts \ref{en:tech1} and \ref{en:tech2} are pretty straightforward, so we limit ourselves to check the bound on second derivatives of $f\cdot g$, leaving the other computations to the reader. If $f,g\in \mathcal{C}^\rho_{y_1}(K)$ we have
\[\begin{split}
|(f\cdot g)''(x)| &= |2f'(x)g'(x)+f(x)g''(x)+f''(x)g(x)|\\
&\leq 2K^2 + 2K^2x^\rho\leq 2K^2y_1^{-\rho}x^\rho + 2K^2x^\rho,
\end{split}\]
and hence $\widetilde{K} \defeq 2K^2(y_1^{-\rho}+1)$ works.

The proof of part \ref{en:tech3} requires just a bit more care. Observing that we have
\[
(x-y_0)^{\gamma+1}=(\gamma+1)\int_{y_0}^x(y-y_0)^\gamma \,dy\qquad (x>y_0),
\]
and the analogous formula for $x<y_0$, it is clear that $g^\gamma_{y_0,y_1}(f)$ has a continuous extension to $x_0$. Differentiating the integration by parts formula:
\[
g^\gamma_{y_0,y_1}(f)(x)=\frac{f(x)}{\gamma+1}-(x-y_0)^{-\gamma-1}\int_{y_0}^x\frac{(y-y_0)^{\gamma+1}}{\gamma+1}f'(y) \,dy\qquad (x>y_0)
\]
and the corresponding one for $x<y_0$, we find
\[
\frac{d}{dx}g^\gamma_{y_0,y_1}(f)(x)=g^{\gamma+1}_{y_0,y_1}(f')(x)\qquad\forall x\neq y_0.
\]
In particular, from what we said above, $g^\gamma_{y_0,y_1}(f)$ is $C^1$. Iterating the procedure, we see that it is in fact $C^2$ and
\[
\frac{d^2}{dx^2}g^\gamma_{y_0,y_1}(f)(x)=g^{\gamma+2}_{y_0,y_1}(f'')(x).
\]
Moreover, if $x<y_0$, then
\[\begin{split}
\left|\frac{d^2}{dx^2}g^\gamma_{y_0,y_1}(f)(x)\right| 
&\leq (y_0-x)^{-\gamma-3}\int_x^{y_0}(y_0-y)^{\gamma+2} |f''(y)| \,dy\\
&\leq Kx^\rho(y_0-x)^{-\gamma-3}\int_x^{y_0}(y_0-y)^{\gamma+2} \,dy\leq \frac{K}{\gamma+3}x^\rho, 
\end{split}\]
while, if $x>y_0$, then
\[\begin{split}
\left|\frac{d^2}{dx^2}g^\gamma_{y_0,y_1}(f)(x)\right|
&\leq (x-y_0)^{-\gamma-3}\int_{y_0}^x(y-y_0)^{\gamma+2} |f''(y)| \,dy\\
&\leq \frac{Ky_0^\rho}{\gamma+3}\leq \frac{Ky_0^\rho}{(\gamma+3)y_1^\rho}x^\rho.
\end{split}\]
We omit the easier bounds on $g^\gamma_{y_0,y_1}(f)$ and its first derivative.
\end{proof}

\subsection{Bounding $J$}\label{J-sec}

We can now discuss the main step in the proof of Proposition \ref{trans-prp}, that is, Lemma \ref{J-lem} below. This relies on a few facts that we proceed to state and prove.

\begin{prp}\label{x1-prp}
We have $x_0\simeq1$ and $b\simeq1$. Moreover, there exists $x_1>x _0$ such that $x_1\simeq 1$ and the following estimates hold for $x\geq x_1$:
\[
\sqrt{U(x)-1}\simeq x^{\pdeg/2}, \qquad \zeta(x)\simeq x^{(\pdeg+2)/3}, \qquad \zeta'(x)\simeq \zeta(x)^{(\pdeg-1)/(\pdeg+2)}.
\]
\end{prp}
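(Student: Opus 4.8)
Here is how one would establish Proposition~\ref{x1-prp}. The statement is purely a matter of unwinding the defining inequalities of $\Pot^\pdeg_+(\co)$ together with elementary one-variable integral estimates; the only genuine issue is to keep every implicit constant dependent on $\pdeg$ and $\co$ alone. First I would pin down $x_0$ and $b$. From $\co^{-1}x_0^\pdeg\le U(x_0)=1\le\co x_0^\pdeg$ one gets $\co^{-1/\pdeg}\le x_0\le\co^{1/\pdeg}$, that is, $x_0\simeq 1$. For $b$: the trivial bound $1-U\le 1$ on $(0,x_0)$ gives $b^{3/2}=\frac32\int_0^{x_0}\sqrt{1-U}\le\frac32 x_0\lesssim 1$; conversely $U(x)\le\co x^\pdeg\le\frac12$ for $x\le(2\co)^{-1/\pdeg}$, and since $(2\co)^{-1/\pdeg}<\co^{-1/\pdeg}\le x_0$, integrating $\sqrt{1-U}\ge 2^{-1/2}$ over this subinterval gives $b^{3/2}\gtrsim 1$. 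Hence $b\simeq 1$.

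Next I would fix $x_1\defeq(2\co)^{1/\pdeg}$, so that $x_1>\co^{1/\pdeg}\ge x_0$ and $x_1\simeq 1$. For $x\ge x_1$ we have $\co^{-1}x^\pdeg\ge 2$, hence $\frac12\co^{-1}x^\pdeg\le U(x)-1\le\co x^\pdeg$; this is exactly $U(x)-1\simeq x^\pdeg$, i.e.\ the first estimate. For the estimate on $\zeta$ I would exploit $\zeta(x)^{3/2}=\frac32\int_{x_0}^x\sqrt{U-1}$ and split the integral at $x_1$. The contribution of $[x_0,x_1]$ is a constant $\simeq 1$: from above it is at most $\sqrt{\co}\,x_1^{\pdeg/2}(x_1-x_0)\lesssim 1$, and from below one uses $U'(y)\ge\co^{-1}y^{\pdeg-1}\gtrsim 1$ on the fixed-length interval $[x_0,x_1]$ to get $U(y)-1=\int_{x_0}^y U'\gtrsim y-x_0$, whence $\int_{x_0}^{x_1}\sqrt{U-1}\gtrsim(x_1-x_0)^{3/2}\simeq 1$. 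The contribution of $[x_1,x]$ is $\simeq\int_{x_1}^x y^{\pdeg/2}\,dy\simeq x^{\pdeg/2+1}-x_1^{\pdeg/2+1}$; treating $x\in[x_1,2x_1]$ (where everything is $\simeq 1$, with the $[x_0,x_1]$-piece supplying the lower bound) and $x\ge 2x_1$ (where $x_1^{\pdeg/2+1}\le\frac12 x^{\pdeg/2+1}$) separately, one obtains $\zeta(x)^{3/2}\simeq x^{\pdeg/2+1}$, i.e.\ $\zeta(x)\simeq x^{(\pdeg+2)/3}$.

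For the last estimate I would differentiate $\zeta(x)^{3/2}=\frac32\int_{x_0}^x\sqrt{U-1}$ to obtain $\zeta'(x)=\zeta(x)^{-1/2}\sqrt{U(x)-1}$ for $x>x_0$, and then feed in the two estimates just established: for $x\ge x_1$,
\[
\zeta'(x)\simeq x^{\pdeg/2}\cdot x^{-(\pdeg+2)/6}=x^{(\pdeg-1)/3}=\bigl(x^{(\pdeg+2)/3}\bigr)^{(\pdeg-1)/(\pdeg+2)}\simeq\zeta(x)^{(\pdeg-1)/(\pdeg+2)}.
\]
I do not expect any real obstacle here; the only mildly delicate point — and the single place where uniformity in $U$ could be lost through carelessness — is the lower bound on $\int_{x_0}^{x_1}\sqrt{U-1}$, where one has to use $U'\gtrsim 1$ on a fixed interval rather than just the positivity of $U-1$.
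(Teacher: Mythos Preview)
Your proof is correct and follows essentially the same route as the paper's. The one minor variation is that you fix $x_1=(2\co)^{1/\pdeg}$ explicitly and then need the lower bound $\int_{x_0}^{x_1}\sqrt{U-1}\gtrsim 1$ (the step you flag as delicate) to handle $x$ near $x_1$, whereas the paper leaves $x_1\simeq 1$ free and simply chooses it large enough that the power term $x^{\pdeg/2+1}$ already dominates, sidestepping that step entirely.
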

\begin{proof}
We are going to use Definition \ref{pot-dfn} many times without comment.

The estimate on the transition point is obvious: $x_0 \simeq U(x_0)^{1/\pdeg}=1$. Then
\[
b = \left(\frac{3}{2}\int_0^{x_0}\sqrt{1-U}\right)^{2/3}\lesssim x_0^{2/3}\simeq 1.
\]
If we define $x_0'$ by $U(x_0')=1/2$, we also have $x_0'\simeq1$ and
\[
b = \left(\frac{3}{2}\int_0^{x_0}\sqrt{1-U}\right)^{2/3} \geq \left(\frac{3}{2}\int_0^{x_0'}\frac{1}{\sqrt{2}}\right)^{2/3}\simeq 1.
\]
Defining $x_1'$ by $U(x_1')=2$, we have $x_1'\simeq1$ and, for every $x\geq x_1'$,
\[
x^{\pdeg/2}\lesssim \sqrt{\frac{U(x)}{2}}\leq \sqrt{U(x)-1}\leq \sqrt{U(x)}\lesssim x^{\pdeg/2},
\]
and
\[\begin{split}
\zeta(x)^{3/2} 
&\simeq \int_{x_0}^x\sqrt{U(x)-1}\\
&\simeq \int_{x_0}^{x_1'}\sqrt{U(x)-1} +\int_{x_1'}^x x^{\pdeg/2}\\
&\simeq \int_{x_0}^{x_1'}\sqrt{U(x)-1}+ x^{1+\pdeg/2}-(x_1')^{1+\pdeg/2}
\end{split}\]
Since $\int_{x_0}^{x_1'}\sqrt{U(x)-1}\leq x_1'\lesssim1$, it is clear that we can choose $x_1\simeq1$ such that $\zeta(x)^{3/2}\simeq x^{1+\pdeg/2}$ when $x\geq x_1$, as we wanted. 

Finally, differentiating the identity $\frac{2}{3}\zeta(x)^{3/2}=\int_{x_0}^x\sqrt{U-1}$ for $x>x_0$ and the corresponding identity for $x<x_0$, we find
\begin{equation}\label{eq:der_ch_var}
\zeta'=\sqrt{\frac{U-1}{\zeta}} \qquad (x\neq x_0).
\end{equation}
Notice that $U>1$ when $\zeta>0$ and $U<1$ when $\zeta<0$. The estimate of $\zeta'$ on $[x_1,+\infty)$ follows from what we just proved.
\end{proof}

Consider next the auxiliary function
\[
\beta(x) \defeq \begin{cases}
\frac{U(x)-1}{x-x_0}, &x\neq x_0,\\
U'(x_0), &x=x_0,
\end{cases}
\]
which is clearly positive and continuous on $\Rpos$ and $C^3$ on $\Rpos\setminus\{x_0\}$.

\begin{prp}\label{id-prp} The following identity holds:
\[
\zeta'=\left(\frac{2}{3}\right)^{1/3} \sqrt{\beta}\cdot [g^{1/2}_{x_0,x_1}(\sqrt{\beta})(x)]^{-1/3}\qquad\forall x \in (0,x_1) \setminus \{x_0\}.
\]
\end{prp}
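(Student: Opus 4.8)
The plan is simply to unwind the definitions of $g^{1/2}_{x_0,x_1}$ and of $\zeta$ and to recognize the claimed identity as a repackaging of \eqref{eq:der_ch_var}. The one elementary observation that makes everything work is that, since $U(y)-1=\beta(y)\,(y-x_0)$ by definition of $\beta$, we have
\[
(y-x_0)^{1/2}\sqrt{\beta(y)}=\sqrt{U(y)-1}\qquad(y\in(x_0,x_1))
\]
and likewise $(x_0-y)^{1/2}\sqrt{\beta(y)}=\sqrt{1-U(y)}$ for $y\in(0,x_0)$. In particular $\sqrt{\beta}$ is positive and continuous on $(0,x_1)$, so $g^{1/2}_{x_0,x_1}(\sqrt{\beta})$ is well defined (here $\gamma=1/2>-1$).

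First I would treat $x\in(x_0,x_1)$. Substituting the above into the definition of $g^\gamma_{y_0,y_1}$ gives
\[
g^{1/2}_{x_0,x_1}(\sqrt{\beta})(x)=(x-x_0)^{-3/2}\int_{x_0}^x\sqrt{U(y)-1}\,dy=\frac{2}{3}\left(\frac{\zeta(x)}{x-x_0}\right)^{3/2},
\]
where the last equality is exactly the definition of $\zeta$ on $[x_0,+\infty)$. Raising this to the power $-1/3$, multiplying by $(2/3)^{1/3}\sqrt{\beta(x)}$, and using $\beta(x)=(U(x)-1)/(x-x_0)$, all the numerical constants cancel (note $(2/3)^{1/3}(3/2)^{1/3}=1$ and the $3/2$-power against the $-1/3$-power leaves a square root) and one is left with $\sqrt{(U(x)-1)/\zeta(x)}$, which is $\zeta'(x)$ by \eqref{eq:der_ch_var}.

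Then I would repeat the same computation for $x\in(0,x_0)$: here $g^{1/2}_{x_0,x_1}(\sqrt{\beta})(x)=(x_0-x)^{-3/2}\int_x^{x_0}\sqrt{1-U(y)}\,dy=\frac{2}{3}\bigl(-\zeta(x)/(x_0-x)\bigr)^{3/2}$ by the definition of $\zeta$ on $(0,x_0]$, and the identical algebra — now using $\beta(x)=(1-U(x))/(x_0-x)$ and $\zeta(x)<0$ — produces $\sqrt{(1-U(x))/(-\zeta(x))}=\zeta'(x)$, again by \eqref{eq:der_ch_var}. There is no genuine obstacle in this proposition; the only steps that require a moment's care are the sign bookkeeping in the region $x<x_0$, where $U<1$ and $\zeta<0$, and the verification that the turning point $x_0$ is excluded (so that all the quantities above are smooth and the integration-by-parts identities underlying Lemma \ref{tech-lem} are not needed for this particular identity).
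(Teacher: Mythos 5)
Your proposal is correct and follows essentially the same route as the paper: both proofs unwind the definition of $g^{1/2}_{x_0,x_1}$ using the factorization $\sqrt{U-1}=(y-x_0)^{1/2}\sqrt{\beta}$ together with the definition of $\zeta$ and the identity \eqref{eq:der_ch_var}; you merely run the computation in the opposite direction (from $g^{1/2}_{x_0,x_1}(\sqrt\beta)$ towards $\zeta'$ rather than from $\zeta'$ towards $g^{1/2}_{x_0,x_1}(\sqrt\beta)$) and spell out the sign bookkeeping on $(0,x_0)$ that the paper dismisses as analogous.
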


\begin{proof}
From the identity \eqref{eq:der_ch_var} it follows that
\[
\zeta'=\sqrt{\frac{U-1}{\zeta}}=\sqrt{\beta}\left(\sqrt{\frac{\zeta}{x-x_0}}\right)^{-1}\qquad (x\neq x_0).
\] 
If $x>x_0$, then
\[\begin{split}
\sqrt{\frac{\zeta}{x-x_0}} 
&= \left(\frac{3}{2}\right)^{1/3} \left(|x-x_0|^{-{3/2}}\int_{x_0}^x\sqrt{U-1}\right)^{1/3}\\
&=\left(\frac{3}{2}\right)^{1/3} \left(|x-x_0|^{-{3/2}}\int_{x_0}^x (y-x_0)^{1/2}\sqrt{\frac{U(y)-1}{y-x_0}} \,dy\right)^{1/3}\\
&=\left(\frac{3}{2}\right)^{1/3} g^{1/2}_{x_0,x_1}(\sqrt{\beta})(x)^{1/3}.
\end{split}\]
Analogously one can see that the same identity holds also for $x<x_0$. 
\end{proof}

\begin{prp}\label{beta-prp}
$\beta\in\mathcal{C}^{\rho}_{x_1}(K_1)$, where $K_1\lesssim 1$ and $\rho=\min\{\pdeg-2,0\}$.
\end{prp}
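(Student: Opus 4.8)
\emph{Plan.} The strategy is to play off two complementary descriptions of $\beta$ against each other. Near the turning point, where the factor $x-x_0$ degenerates, I would use the integral representation
$\beta(x)=\int_0^1 U'\big((1-t)x_0+tx\big)\,dt$,
valid for every $x\in\Rpos$, which follows from the fundamental theorem of calculus applied to $U(x)-U(x_0)=U(x)-1$ after the change of variable $s=(1-t)x_0+tx$. Since for $x\in\Rpos$ the segment joining $x_0$ and $x$ stays in $\Rpos$ and $U$ is $C^3$ there, one may differentiate under the integral sign: this shows that $\beta\in C^2(\Rpos)$ (in particular it settles the regularity of $\beta$ across $x_0$, which is not covered by the elementary remark preceding the statement) and gives $\beta'(x)=\int_0^1 t\,U''((1-t)x_0+tx)\,dt$ and $\beta''(x)=\int_0^1 t^2\,U'''((1-t)x_0+tx)\,dt$. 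Away from $x_0$, on the other hand, I would simply differentiate the identity $(x-x_0)\beta(x)=U(x)-1$ to obtain $\beta'(x)=\frac{U'(x)-\beta(x)}{x-x_0}$ and $\beta''(x)=\frac{U''(x)-2\beta'(x)}{x-x_0}$ for $x\neq x_0$.

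For the quantitative bounds, recall from Proposition \ref{x1-prp} that $x_0\simeq1$ and $x_1\simeq1$, and let $x_0'$ be the point (used in that proof) with $U(x_0')=1/2$, so that $x_0'\simeq1$; since $U'\simeq1$ on $[x_0',x_0]$ and $\int_{x_0'}^{x_0}U'=1/2$, one also gets $x_0-x_0'\simeq1$. I would split $(0,x_1)=(0,x_0')\cup[x_0',x_1)$. On $[x_0',x_1)$ every argument $(1-t)x_0+tx$ with $t\in[0,1]$ lies in $[\min(x_0,x),\max(x_0,x)]\subseteq[x_0',\max(x_0,x_1)]$, hence is $\simeq1$; feeding $U'\simeq1$, $|U''|\lesssim1$, $|U'''|\lesssim1$ into the integral representations and using $\int_0^1 t^k\,dt\simeq1$ yields $\beta\simeq1$, $|\beta'|\lesssim1$, $|\beta''|\lesssim1\simeq x^\rho$ there. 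On $(0,x_0')$ one has $0<U(x)<1/2$ and $x_0-x\geq x_0-x_0'\simeq1$, so the differentiated identities, together with $|U^{(k)}(x)|\lesssim x^{\pdeg-k}$ and the bound $x^{\pdeg-1}\lesssim1$ (using $\pdeg>1$ and $x\lesssim1$ on this interval), give $\beta(x)\simeq1$, $|\beta'(x)|\lesssim1$ and $|\beta''(x)|\lesssim x^{\pdeg-2}+1$; since $\rho=\min\{\pdeg-2,0\}\leq0$ and $x\lesssim1$, both $x^{\pdeg-2}$ and $1$ are $\lesssim x^\rho$, so $|\beta''(x)|\lesssim x^\rho$. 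Collecting the two cases gives $\beta\in\mathcal{C}^\rho_{x_1}(K_1)$ with $K_1\lesssim1$.

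The one delicate point is exactly the growth of $\beta''$ as $x\to0^+$: estimating $\beta''(x)=\int_0^1 t^2 U'''(\cdots)\,dt$ directly via $|U'''|\lesssim s^{\pdeg-3}$ would lose a logarithmic factor in the borderline case $\pdeg=2$. Using instead $(x-x_0)\beta''(x)=U''(x)-2\beta'(x)$ (equivalently, one integration by parts inside the integral representation) trades the $x^{\pdeg-3}$ singularity of $U'''$ for the milder $x^{\pdeg-2}$ singularity of $U''$, which is precisely what produces the exponent $\rho=\min\{\pdeg-2,0\}$; this is the only step where the hypotheses of Definition \ref{pot-dfn} on $U''$ (and the restriction $\pdeg>1$) are genuinely used.
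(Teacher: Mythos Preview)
Your proof is correct and follows essentially the same strategy as the paper's: split $(0,x_1)$ into a neighbourhood of $x_0$ (where a Taylor-type representation of $\beta$ and its derivatives removes the apparent singularity) and the complementary region near $0$ (where direct differentiation of $(x-x_0)\beta(x)=U(x)-1$ suffices, and where the $x^{\pdeg-2}$ bound on $U''$ produces the exponent $\rho$). The only noteworthy difference is cosmetic: the paper uses the Lagrange form of the remainder on a two-sided interval $(x_0',x_0'')$ around $x_0$, obtaining $\beta'(x)=U''(\bar x)/2$ and $\beta''(x)=U'''(\bar x)/3$ for suitable intermediate points $\bar x$, whereas you use the integral form $\beta^{(k)}(x)=\int_0^1 t^k\,U^{(k+1)}((1-t)x_0+tx)\,dt$ on $[x_0',x_1)$; your version has the minor advantage of making the $C^2$ regularity of $\beta$ across $x_0$ immediate, and your remark about why the integral representation of $\beta''$ alone would lose a logarithm at $\pdeg=2$ pinpoints exactly why both arguments need the differentiated identity near $x=0$.
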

Here $x_1$ is the constant appearing in Proposition \ref{x1-prp}.

\begin{proof}
\emph{Estimate of $\beta(x)$:} Let $x_0'$ and $x_0''$ be such that $U(x_0')=1/2$ and $U(x_0'')=3/2$, so that
\[
\frac{1}{2}=U(x_0)-U(x_0')=U'(\bar{x})(x_0-x_0') \simeq\bar{x}^{\pdeg-1}(x_0-x_0'), 
\]
for some $\bar{x}\in[x_0',x_0]$. Since $x_0', x_0\simeq1$, this implies $(x_0-x_0')\simeq 1$. Analogously, one shows that $(x_0''-x_0)\simeq 1$. Therefore, when $x\in (0,x_1]\setminus (x_0',x_0'')$, we have $\beta(x)\simeq |U(x)-1|$ and, using the monotonicity of $U$, we conclude that
\[
\frac{1}{2}=\min\{1-U(x_0'), U(x_0'')-1\}\leq |U(x)-1|\lesssim 1+x_1^{\pdeg}\lesssim 1.
\]
If instead $x\in (x_0',x_0'')\setminus\{x_0\}$, then
\[
\beta(x)=\frac{U(x)-U(x_0)}{x-x_0}=U'(\bar{x})\simeq \bar{x}^{\pdeg-1},
\]
for some $\bar{x}$ between $x$ and $x_0$. Hence $\beta(x)\simeq 1$ on $(0,x_1]$.

\medskip\emph{Estimate of $|\beta'(x)|$:} We have
\[
\beta'(x)=\frac{U'(x)(x-x_0)-U(x)+1}{(x-x_0)^2}\qquad (x\neq x_0).
\]
If $x\in  (0,x_1]\setminus (x_0',x_0'')$, then $|\beta'(x)|\lesssim U'(x)+U(x)+1\lesssim x^{\pdeg-1} + x^{\pdeg}+1\lesssim 1$ (notice that $\pdeg>1$ and $x^{\pdeg-1}$ is bounded at $0$). If instead $x\in (x_0',x_0'')$, then we look at the expansion of $U$ up to second order:
\[
1=U(x_0)=U(x) + U'(x)(x_0-x) + \frac{U''(\bar{x})}{2}(x_0-x)^2,
\]
for some $\bar{x}$ between $x$ and $x_0$. This allows us to write $\beta'(x) = U''(\bar{x})/2$, which implies easily that $\beta$ is $C^1$ and $|\beta'(x)|\lesssim1$ on $(0,x_1]$.

\medskip\emph{Estimate of $|\beta''(x)|$:} We have
\[
\beta''(x)=\frac{U''(x)(x-x_0)^2-2U'(x)(x-x_0)+2U(x)-2}{(x-x_0)^3}\qquad (x\neq x_0).
\]
If $x\in (0,x_1] \setminus (x_0',x_0'')$, then
\[\begin{split}
|\beta''(x)|
&\lesssim |U''(x)|+U'(x)+U(x)+1\\
&\lesssim x^{\pdeg-2}+x^{\pdeg-1} + x^{\pdeg}+1\lesssim x^{\rho},
\end{split}\]
where $\rho \defeq \min\{\pdeg-2,0\}$. If instead $x\in (x_0',x_0'')$, then we look at the expansion of $U$ up to third order:
\[
1=U(x_0)=U(x) + U'(x)(x_0-x) + \frac{U''(x)}{2}(x_0-x)^2+\frac{U'''(\bar{x})}{6}(x_0-x)^3,
\]
for some $\bar{x}$ between $x$ and $x_0$. Analogously as above, this allows us to write $\beta''(x) = U'''(\bar{x})/3$, and conclude that $\beta$ is $C^2$ and $\beta''(x)\lesssim x^{\rho}$ on $(0,x_1]$.
\end{proof}

Combining Proposition \ref{id-prp}, Proposition \ref{beta-prp}, and Lemma \ref{tech-lem} immediately yields the following result.

\begin{prp}\label{zeta-prp}
$\zeta'\in\mathcal{C}^\rho_{x_1}(K_2)$ with $K_2\lesssim 1$ and $\rho=\min\{\pdeg-2,0\}$. In particular $\zeta$ is a $C^3$ diffeomorphism of $\Rpos$ onto $(-b,+\infty)$. 
\end{prp}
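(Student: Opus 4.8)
The plan is to read off the proposition directly from the factorisation of $\zeta'$ supplied by Proposition \ref{id-prp}, pushing it through the three stability properties of the classes $\mathcal{C}^\rho_{x_1}$ recorded in Lemma \ref{tech-lem}. Recall that Proposition \ref{id-prp} gives
\[
\zeta' = \left(\tfrac{2}{3}\right)^{1/3} \sqrt{\beta}\cdot \bigl[g^{1/2}_{x_0,x_1}(\sqrt{\beta})\bigr]^{-1/3}
\qquad\text{on } (0,x_1)\setminus\{x_0\},
\]
while Proposition \ref{beta-prp} tells us that $\beta \in \mathcal{C}^\rho_{x_1}(K_1)$ with $K_1 \lesssim 1$ and $\rho = \min\{\pdeg-2,0\}$.

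First I would apply Lemma \ref{tech-lem}\ref{en:tech2} with $r=1/2$ to get $\sqrt{\beta}\in\mathcal{C}^\rho_{x_1}(\widetilde K)$. Then Lemma \ref{tech-lem}\ref{en:tech3} with $\gamma=1/2>-1$ yields $g^{1/2}_{x_0,x_1}(\sqrt{\beta})\in\mathcal{C}^\rho_{x_1}(\widetilde K)$; this step also provides the crucial qualitative fact that $g^{1/2}_{x_0,x_1}(\sqrt{\beta})$, a priori defined only for $x\neq x_0$, extends to a $C^2$ function across the turning point. Since membership in $\mathcal{C}^\rho_{x_1}$ forces a function to be positive and bounded below by a positive constant, Lemma \ref{tech-lem}\ref{en:tech2} applies again with $r=-1/3$, giving $[g^{1/2}_{x_0,x_1}(\sqrt{\beta})]^{-1/3}\in\mathcal{C}^\rho_{x_1}(\widetilde K)$, and Lemma \ref{tech-lem}\ref{en:tech1} then shows the product $\sqrt{\beta}\cdot[g^{1/2}_{x_0,x_1}(\sqrt{\beta})]^{-1/3}$ lies in $\mathcal{C}^\rho_{x_1}(\widetilde K)$. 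Multiplication by the fixed constant $(2/3)^{1/3}$ only rescales the parameter, so $\zeta'$ agrees on $(0,x_1)\setminus\{x_0\}$ with an element of $\mathcal{C}^\rho_{x_1}(K_2)$, and by the extension property noted above it belongs to $\mathcal{C}^\rho_{x_1}(K_2)$ once redefined at $x_0$ by continuity. Every invocation of Lemma \ref{tech-lem} produces a constant depending only on the previous constant, on $\rho$, on $\gamma=1/2$, and on $x_0,x_1$, all of which are $\lesssim 1$ by Proposition \ref{x1-prp}; tracing the finitely many steps therefore gives $K_2\lesssim 1$.

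For the ``in particular'' clause, I would argue as follows. Since $\zeta'$ coincides on $(0,x_1)\setminus\{x_0\}$ with a positive $C^2$ function on $(0,x_1)$ and $\zeta$ is continuous, a mean value argument shows $\zeta$ is differentiable at $x_0$ with $\zeta'(x_0)$ equal to the continuous extension; hence $\zeta$ is $C^3$ on $(0,x_1)$ with $\zeta'>0$ there. Away from $x_0$ the map $\zeta$ is already $C^4$ (as noted after the introduction of $\zeta$), so altogether $\zeta\in C^3(\Rpos)$ with $\zeta'>0$ everywhere; being a homeomorphism of $\Rpos$ onto $(-b,+\infty)$ by construction, $\zeta$ is in fact a $C^3$ diffeomorphism.

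As for the main obstacle: there really is none of substance at this stage. Proposition \ref{zeta-prp} is the payoff of the preceding three results, and all the analytic difficulty — the scale-invariant control of $\beta,\beta',\beta''$ in Proposition \ref{beta-prp} and, above all, the removable-singularity statement at $x_0$ built into Lemma \ref{tech-lem}\ref{en:tech3} — has already been dispatched. The only point demanding mild care is the bookkeeping of constants: one must check that composing a bounded number of applications of Lemma \ref{tech-lem}, starting from $K_1\lesssim 1$ and with auxiliary parameters $x_0,x_1\simeq 1$, keeps the final $K_2$ bounded by a quantity depending only on $\pdeg$ and $\co$; since each application modifies the constant in a controlled way and the number of applications is fixed, this is automatic.
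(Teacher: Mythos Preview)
Your proposal is correct and follows exactly the route the paper takes: the paper's proof is the single sentence ``Combining Proposition \ref{id-prp}, Proposition \ref{beta-prp}, and Lemma \ref{tech-lem} immediately yields the following result,'' and you have simply unpacked that sentence step by step. Your bookkeeping of the constants (noting that $x_0,x_1\simeq 1$ so that the parameters entering Lemma \ref{tech-lem} stay under control) is the only nontrivial point, and you handle it correctly.
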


We can finally prove the fundamental uniform bound on the quantity $J$ defined in \eqref{eq:J}, that will allow us to obtain uniform estimates from Olver's result.

\begin{lem}\label{J-lem}
$J\lesssim1$.
\end{lem}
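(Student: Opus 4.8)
The plan is to split the integral $J = \int_{-b}^{+\infty} |\Phi(\zeta)| \, |\zeta|^{-1/2} \, d\zeta$ into a bounded piece near the turning point and a tail piece, and to control each separately. Recall that by \eqref{schwarzian} the Schwarzian $\Phi$ is built from $dx/d\zeta = (\zeta')^{-1}$ and its $\zeta$-derivatives; it is convenient first to re-express $\Phi$ purely in terms of $\zeta'$ and its $x$-derivatives. A standard computation (changing variables in \eqref{schwarzian}) gives a formula of the shape
\[
\Phi(\zeta) = (\zeta')^{-2} \left( c_1 \frac{\zeta'''}{\zeta'} + c_2 \left(\frac{\zeta''}{\zeta'}\right)^2 \right)
\]
for absolute constants $c_1, c_2$ (one should double-check the exact coefficients against Olver, but their precise values are irrelevant). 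Actually, since $\zeta'$ is only $C^2$ (by Proposition \ref{zeta-prp}), and $\Phi$ a priori involves a third derivative, one must be slightly careful: on $(0,x_1)$ one works with $f = (\zeta')^2/\beta$, which by Propositions \ref{id-prp} and \ref{beta-prp} together with Lemma \ref{tech-lem} lies in $\mathcal{C}^\rho_{x_1}(\widetilde K)$, and one re-expresses $\Phi$ so that it involves at worst $f''$ (hence at worst the controlled singularity $x^\rho$ near the origin); near and past $x_0$ the function $\beta$ and $\zeta'$ are smooth and comparable to $1$, so $\Phi$ is genuinely bounded there.

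For the \emph{bounded region}, say $x \in (0, x_1]$ i.e.\ $\zeta \in (-b, \zeta(x_1)]$, I would argue as follows. By Proposition \ref{zeta-prp}, $\zeta' \simeq 1$, $|\zeta''| \lesssim 1$, and $|(\zeta')''| = |\zeta'''| \lesssim x^\rho$ on $(0,x_1]$ with $\rho = \min\{\pdeg - 2, 0\} > -1$ (using $\pdeg > 1$). Plugging these into the formula for $\Phi$ gives $|\Phi(\zeta)| \lesssim x^\rho + 1 \lesssim x^\rho$ for $x$ near $0$ and $|\Phi(\zeta)| \lesssim 1$ on any region bounded away from $0$. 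Since on this region $|\zeta|^{-1/2}$ is bounded except near $\zeta = 0$ (i.e.\ $x = x_0$), and near $x_0$ one has $|\zeta| \simeq |x - x_0|$ while $\Phi$ is bounded, the factor $|\zeta|^{-1/2}$ contributes an integrable singularity. Changing variables back to $x$ (with $d\zeta = \zeta' \, dx \simeq dx$), the contribution of this region is $\lesssim \int_0^{x_1} (x^\rho + |x - x_0|^{-1/2}) \, dx \lesssim 1$, using $\rho > -1$ and $x_0, x_1 \simeq 1$.

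For the \emph{tail region} $x \geq x_1$, I would use the asymptotics from Proposition \ref{x1-prp}: there $\zeta(x) \simeq x^{(\pdeg+2)/3}$ and $\zeta'(x) \simeq \zeta(x)^{(\pdeg - 1)/(\pdeg+2)} \simeq x^{(\pdeg-1)/3}$. One then needs the corresponding bounds on $\zeta''$ and $\zeta'''$ on $[x_1, +\infty)$; these follow by differentiating \eqref{eq:der_ch_var}, $\zeta' = \sqrt{(U-1)/\zeta}$, and using the pointwise control of $U, U', U'', U'''$ from Definition \ref{pot-dfn} together with the established size of $\zeta$ and $\zeta'$ — one gets $|\zeta''| \lesssim x^{(\pdeg - 4)/3}$ and $|\zeta'''| \lesssim x^{(\pdeg - 7)/3}$ (the exponents drop by $1$ each time, consistent with $\zeta$ behaving like a power $x^{(\pdeg+2)/3}$). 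Substituting into the formula for $\Phi$ and using $dx/d\zeta = (\zeta')^{-1}$, every term in $\Phi(\zeta)$ is $\lesssim x^{-(\pdeg+4)/3} \simeq \zeta^{-(\pdeg+4)/(\pdeg+2)}$, and $|\zeta|^{-1/2} \simeq \zeta^{-1/2}$. Hence $|\Phi(\zeta)| |\zeta|^{-1/2} \lesssim \zeta^{-(\pdeg+4)/(\pdeg+2) - 1/2}$, and since the exponent $(\pdeg+4)/(\pdeg+2) + 1/2 > 1$ for all $\pdeg > 0$, the tail integral $\int_{\zeta(x_1)}^{+\infty} |\Phi(\zeta)| |\zeta|^{-1/2} \, d\zeta$ converges and is $\lesssim 1$.

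The main obstacle I anticipate is purely bookkeeping: carefully tracking the right power-of-$x$ bounds for $\zeta''$ and $\zeta'''$ in the tail (and the controlled singular bound for $\zeta'''$ near the origin), and verifying that these feed through the Schwarzian formula to give an exponent strictly beyond the integrability threshold — in particular making sure the delicate point $x_0$ (where $\zeta' $ is only $C^3$, not $C^4$, globally, and where $|\zeta|^{-1/2}$ blows up) is handled by the fact that $\Phi$ stays bounded there. The conceptual content — that a potential in $\Pot^\pdeg_+(\co)$ with $\pdeg > 1$ is "regular enough at the turning point and decays slowly enough at infinity" for Olver's error-control integral to converge uniformly — is exactly what Propositions \ref{x1-prp}, \ref{id-prp}, \ref{beta-prp}, and \ref{zeta-prp} have been set up to provide, so the proof should be a matter of assembling these with the explicit $\Phi$ computation.
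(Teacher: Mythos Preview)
Your approach is essentially the one the paper takes: split the $\zeta$-integral into a bounded piece $(-b,\zeta_1]$ and a tail $[\zeta_1,\infty)$, control $|\Phi|$ on the bounded piece via the $\mathcal{C}^\rho_{x_1}$ membership of $\zeta'$ (Proposition~\ref{zeta-prp}) to get $|\Phi(\zeta(x))|\lesssim x^\rho$ with $\rho>-1$, and use power-law decay on the tail. The paper further subdivides the bounded piece into $(-b,-b/2)$ and $(-b/2,\zeta_1)$ so that on each subinterval exactly one of the two factors $|\Phi|$, $|\zeta|^{-1/2}$ is potentially singular; your single-region treatment with the bound $x^\rho+|x-x_0|^{-1/2}$ is equivalent.

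Two remarks on execution. First, your aside about $\zeta'$ being ``only $C^2$'' and needing to re-express $\Phi$ via $f=(\zeta')^2/\beta$ is unnecessary: $\zeta'\in\mathcal{C}^\rho_{x_1}(K_2)$ already gives the pointwise bound $|\zeta'''|\lesssim x^\rho$, which is all your formula for $\Phi$ needs. The paper in fact writes $\Phi(\zeta(x))=(\zeta')^{-3/2}\frac{d}{dx}\bigl[(\zeta')^{-1}\frac{d}{dx}\bigl[(\zeta')^{1/2}\bigr]\bigr]$ and invokes Lemma~\ref{tech-lem} on $(\zeta')^{1/2},(\zeta')^{-1},(\zeta')^{-3/2}\in\mathcal{C}^\rho_{x_1}$, which is a tidy way to get the same bound without expanding $\Phi$.

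Second, on the tail you have an arithmetic slip: carrying $\zeta'\simeq x^{(\pdeg-1)/3}$, $|\zeta''|\lesssim x^{(\pdeg-4)/3}$, $|\zeta'''|\lesssim x^{(\pdeg-7)/3}$ through $\Phi=(\zeta')^{-2}\bigl(c_1\zeta'''/\zeta'+c_2(\zeta''/\zeta')^2\bigr)$ gives $|\Phi|\lesssim x^{-(2\pdeg+4)/3}\simeq\zeta^{-2}$, not $x^{-(\pdeg+4)/3}\simeq\zeta^{-(\pdeg+4)/(\pdeg+2)}$. This is harmless for the conclusion (your weaker exponent still exceeds $1/2$), but the paper sidesteps the derivative computation entirely by quoting Olver's identity
\[
\Phi(\zeta(x))=\zeta(x)\cdot\frac{4ff''-5(f')^2}{16f^3}+\frac{5}{16\zeta(x)^2},\qquad f=U-1,
\]
from which $|\Phi|\lesssim\zeta^{-2}$ is immediate using only Definition~\ref{pot-dfn} and Proposition~\ref{x1-prp}. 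This is worth knowing, as it avoids having to justify the asymptotics of $\zeta''$ and $\zeta'''$ separately.
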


\begin{proof}
By Proposition \ref{zeta-prp}, the Schwarzian derivative $\Phi(\zeta)$ defined in \eqref{schwarzian} is a well-defined continuous function of $\zeta\in(-b,+\infty)$.  We split the integral in \eqref{eq:J} as follows:
\[
J=\left(\int_{-b}^{-b/2} + \int_{-b/2}^{\zeta_1} + \int_{\zeta_1}^{+\infty}\right)\frac{|\Phi(\zeta)|}{\sqrt{|\zeta|}} \,d\zeta \eqdef J_1+J_2+J_3,
\]
where $\zeta_1 \defeq \zeta(x_1)$.

We start by rewriting formula \eqref{schwarzian} as
\[
\Phi(\zeta(x)) = \zeta'(x)^{-1/2} \frac{d^2}{d\zeta^2} \left[\zeta'(x)^{1/2} \right]
= \zeta'(x)^{-3/2} \frac{d}{dx} \left[ \zeta'(x)^{-1} \frac{d}{dx} \left[ \zeta'(x)^{1/2}\right] \right].
\]
By Proposition \ref{zeta-prp} and Lemma \ref{tech-lem}, we have that $(\zeta')^{1/2}, (\zeta')^{-1}, (\zeta')^{-3/2}\in\mathcal{C}^\rho_{x_1}(K_3)$ with $K_3\lesssim 1$, and hence
\begin{equation}\label{phi-bound}
|\Phi(\zeta(x))|\leq K_3^3+K_3^3 \,x^\rho\lesssim x^\rho\qquad\forall x\leq x_1.
\end{equation}
Now denote by $x_{-1}>0$ the point such that $\zeta(x_{-1})=-b/2$. Then
\[
1\simeq\frac{b}{2}=\zeta(x_{-1})-(-b)=\int_0^{x_{-1}}\zeta'(x) \,dx\simeq x_{-1},
\]
where we used Proposition \ref{zeta-prp} and the fact that $x_{-1}<x_0<x_1$. Thus \eqref{phi-bound} gives
\begin{equation}\label{phi-bound-2}
|\Phi(\zeta(x))|\lesssim (x_{-1})^\rho \lesssim 1\qquad\forall x\in (x_{-1},x_1).
\end{equation}

We can now take care of the first two parts of the $J$ integral. We have
\[\begin{split}
J_1 
&= \int_{-b}^{-b/2}\frac{|\Phi(\zeta)|}{\sqrt{|\zeta|}} \,d\zeta \leq \sqrt{\frac{2}{b}}\int_{-b}^{-b/2}|\Phi(\zeta)| \,d\zeta\\
&=\sqrt{\frac{2}{b}}\int_0^{x_{-1}}|\Phi(\zeta(x))| \,\zeta'(x) \,dx \\
&\lesssim \int_0^{x_{-1}}x^\rho \,dx \lesssim 1,
\end{split}\]
where we used \eqref{phi-bound}, $b \simeq 1 \simeq x_{-1}$, $\zeta'\simeq1$ on $(0,x_1)$ (by Proposition \ref{zeta-prp}), and $\rho=\min\{\pdeg-2,0\}>-1$. We also have 
\[
J_2 = \int_{-b/2}^{\zeta_1}\frac{|\Phi(\zeta)|}{\sqrt{|\zeta|}}d\zeta \lesssim \int_{-b/2}^{\zeta_1}\frac{d\zeta}{\sqrt{|\zeta|}}\lesssim1,
\]
where we used \eqref{phi-bound-2} and $b \simeq 1 \simeq \zeta_1$.

To take care of the region where $\zeta\geq\zeta_1$, we recall (see \cite[Chapter 11, eq.\ (3.06)]{olverbook}) that the definition of $\Phi$ and a straightforward computation give
\[
\Phi(\zeta(x)) = \zeta(x) \cdot \left(\frac{4f(x) f''(x)-5f'(x)^2}{16f(x)^3}\right) + \frac{5}{16\zeta(x)^2}, 
\]
where $f(x)=U(x)-1$. By Definition \ref{pot-dfn} and Proposition \ref{x1-prp},
\[
\left|\frac{4f(x)f''(x)-5(f'(x))^2}{16f(x)^3}\right|\lesssim x^{-\pdeg-2}\simeq \zeta(x)^{-3}\qquad\forall x\geq x_1.
\]
Thus $|\Phi(\zeta)|\lesssim \zeta^{-2}$ when $\zeta\geq\zeta_1$ and
\[
J_3=\int_{\zeta_1}^{+\infty}\frac{|\Phi(\zeta)|}{\sqrt{|\zeta|}}d\zeta\lesssim\int_{\zeta_1}^{+\infty}\zeta^{-2}d\zeta\lesssim 1.
\]
This completes the proof.
\end{proof}

\subsection{$L^2$ norms of Olver's solutions}

Lemma \ref{J-lem} proves that the hypotheses of Theorem \ref{olver-thm} are satisfied, so we are allowed to consider the one-parameter families of Olver solutions $\{u_\alpha\}_{\alpha>0}$ and $\{v_\alpha\}_{\alpha>0}$. The next two propositions state that they are respectively recessive and dominant, and do so in a quantitative fashion.

\begin{prp}\label{notL2-prp} If $\alpha>\frac{\lambda}{\log(\frac{\lambda}{\sqrt{2}}+1)} J$, then $v_\alpha\notin L^2(\Rpos)$.
\end{prp}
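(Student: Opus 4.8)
The plan is to read off the behaviour of $v_\alpha$ near $+\infty$ from Olver's representation in Theorem~\ref{olver-thm}, using the explicit form of the auxiliary functions $E$ and $M$ on the region $\{\zeta\geq 0\}$, and to observe that the threshold on $\alpha$ in the statement is exactly the condition ensuring that the error term $\eta_\alpha$ is there \emph{strictly} dominated by $\Bi(\alpha^{2/3}\zeta)$, so that $v_\alpha$ inherits the super-polynomial growth of $\Bi$.

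First I would note that, since $c<0$, for every $x\geq x_0$ the argument $t\defeq\alpha^{2/3}\zeta(x)$ satisfies $t\geq 0>c$, so the lower branches in the definitions of $E$ and $M$ apply and
\[
M(t)\,E(t)=\sqrt{2\Ai(t)\,\Bi(t)}\cdot\sqrt{\Bi(t)/\Ai(t)}=\sqrt2\,\Bi(t).
\]
Substituting this into the error bound for $\eta_\alpha$ in Theorem~\ref{olver-thm} (applicable since $J<+\infty$ by Lemma~\ref{J-lem}) gives, for all $x\geq x_0$,
\[
|\eta_\alpha(x)|\leq\theta\,\Bi(\alpha^{2/3}\zeta(x)),\qquad
\theta\defeq\frac{\sqrt2}{\lambda}\bigl(e^{\lambda\alpha^{-1}J}-1\bigr).
\]
The hypothesis $\alpha>\frac{\lambda}{\log(\frac{\lambda}{\sqrt2}+1)}J$ rearranges to $e^{\lambda\alpha^{-1}J}<\frac{\lambda}{\sqrt2}+1$, which is precisely $\theta<1$.

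Since $\Bi$ is positive on $[0,+\infty)$, it follows that for $x\geq x_0$
\[
|v_\alpha(x)|=(\zeta'(x))^{-1/2}\,\bigl|\Bi(\alpha^{2/3}\zeta(x))+\eta_\alpha(x)\bigr|\geq(1-\theta)\,(\zeta'(x))^{-1/2}\,\Bi(\alpha^{2/3}\zeta(x)),
\]
hence $v_\alpha(x)^2\geq(1-\theta)^2\,(\zeta'(x))^{-1}\,\Bi(\alpha^{2/3}\zeta(x))^2$ on $[x_0,+\infty)$. Now let $x\to+\infty$: since $\zeta$ is a homeomorphism of $\Rpos$ onto $(-b,+\infty)$, we have $\zeta(x)\to+\infty$, and by Proposition~\ref{x1-prp} the factor $(\zeta'(x))^{-1}$ decays only like the fixed power $\zeta(x)^{-(\pdeg-1)/(\pdeg+2)}$. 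Because $\Bi(t)^2$ grows faster than any power of $t$ as $t\to+\infty$ (by the standard asymptotics of the Airy function of the second kind), the product $(\zeta'(x))^{-1}\Bi(\alpha^{2/3}\zeta(x))^2\to+\infty$, so $v_\alpha(x)^2\to+\infty$ as $x\to+\infty$. Therefore $\int_{\Rpos}v_\alpha^2=+\infty$, i.e.\ $v_\alpha\notin L^2(\Rpos)$.

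There is no genuine conceptual obstacle here. The only delicate points are bookkeeping ones: correctly combining the two-branch definitions of $E$ and $M$ on $\{\zeta\geq 0\}$ to produce the clean factor $\sqrt2\,\Bi$, and checking that the exponential threshold in the hypothesis is exactly $\theta<1$. Once $\theta<1$ is secured, the conclusion follows because a fixed negative power of $\zeta(x)$ (coming from $(\zeta')^{-1/2}$, as controlled by Proposition~\ref{x1-prp}) cannot offset the super-polynomial growth of $\Bi$.
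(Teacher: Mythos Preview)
Your proof is correct and follows essentially the same approach as the paper: both use $M(t)E(t)=\sqrt{2}\,\Bi(t)$ for $t\geq c$ to turn Olver's error bound into $|\eta_\alpha|\leq\theta\,\Bi(\alpha^{2/3}\zeta)$ with $\theta<1$, and then exploit the exponential growth of $\Bi$ against the mere power decay of $(\zeta')^{-1}$ coming from Proposition~\ref{x1-prp}. The only cosmetic difference is that the paper changes variables to $\zeta$ and shows the integral $\int\Bi(\alpha^{2/3}\zeta)^2(\zeta')^{-2}\,d\zeta$ diverges, whereas you argue pointwise that $v_\alpha(x)^2\to+\infty$; either route gives the conclusion immediately.
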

Recall that $\lambda$ is the universal positive constant appearing in Theorem \ref{olver-thm}. What matters for us is that, combining Proposition \ref{notL2-prp} and Lemma \ref{J-lem}, we find a threshold for $\alpha$ which is uniform in our class of potentials.

\begin{proof}
Recalling the definitions of $\eta_\alpha$, $c$, and Olver's auxiliary functions (see Section \ref{olver-sec}), we have
\[
|\eta_\alpha(x)|\leq \frac{\sqrt{2}}{\lambda}\Bi(\alpha^{2/3}\zeta(x))\left(e^{\lambda\alpha^{-1}J}-1\right)=(1-\tau) \Bi(\alpha^{2/3}\zeta(x)),\qquad \forall x\colon  \alpha^{2/3}\zeta(x)\geq c,
\]
where $\tau$ is positive, due to the assumption on $\alpha$. Thus
\[\begin{split}
\int_0^{+\infty}v_\alpha^2(x) \,dx
&\geq \tau \int_{\alpha^{2/3}\zeta(x)\geq c}\Bi(\alpha^{2/3}\zeta(x))^2  \,\zeta'(x)^{-1} \,dx\\
&= \tau\int_{\alpha^{2/3}\zeta\geq c}\Bi(\alpha^{2/3}\zeta)^2 \,(\zeta')^{-2} \,d\zeta,
\end{split}\]
where we changed variable of integration, and of course $\zeta' = \zeta'(x(\zeta))$. 
Since $c<0$ and $\zeta_1=\zeta(x_1)>0$, we can restrict the interval of integration to $(\zeta_1,+\infty)$ and use Proposition \ref{x1-prp} to bound this integral from below by
\[
\tau \int_{\zeta_1}^{+\infty}\Bi(\alpha^{2/3}\zeta)^2 \,\zeta^{-(2\pdeg-2)/(\pdeg+2)} \,d\zeta.
\]
This integral diverges because $\Bi(t) \sim \pi^{-1/2}t^{-1/4}\exp(\frac{2}{3}t^{3/2})$ as $t\rightarrow+\infty$.
\end{proof}

\begin{prp}\label{L2-prp} There exists $\alpha_0\simeq1$ such that, for every $\alpha\geq\alpha_0$,
\begin{equation}\label{u-L2}
\int_{\Rpos} u_\alpha(x)^2 \,dx \simeq \alpha^{-1/3},
\end{equation}
and 
\begin{equation}\label{Uu-L2}
\int_{\Rpos} U(x) \,u_\alpha(x)^2 \,dx \gtrsim \alpha^{-1/3}.
\end{equation}
\end{prp}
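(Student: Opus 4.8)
The plan is to insert Olver's representations of $u_\alpha$ and $v_\alpha$ into the integrals and show that the Airy main term dictates the size, while the error terms are genuinely lower-order thanks to the uniform bound $J \lesssim 1$ from Lemma~\ref{J-lem}. First I would fix $\alpha_0 \simeq 1$ large enough (depending only on $\pdeg$ and $\co$) so that $\lambda\alpha^{-1}J \leq 1$ for all $\alpha \geq \alpha_0$; then $e^{\lambda\alpha^{-1}J}-1 \lesssim \alpha^{-1}J \lesssim \alpha^{-1}$, so the error-control factor in Theorem~\ref{olver-thm} is $\BigO(\alpha^{-1})$, uniformly. The upper bound in \eqref{u-L2} is then a matter of writing $\int_{\Rpos} u_\alpha^2 = \int_{\Rpos} (\zeta')^{-1}(\Ai(\alpha^{2/3}\zeta) + \epsilon_\alpha)^2$, changing variables via $d\zeta = \zeta'\,dx$ to get $\int_{-b}^{+\infty}(\zeta')^{-2}(\Ai(\alpha^{2/3}\zeta)+\epsilon_\alpha)^2\,d\zeta$, and using $|\epsilon_\alpha| \lesssim \alpha^{-1} M(\alpha^{2/3}\zeta)/E(\alpha^{2/3}\zeta)$. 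A substitution $s = \alpha^{2/3}\zeta$ produces the factor $\alpha^{-2/3}$ in front, and one needs $\int_{-\alpha^{2/3}b}^{+\infty}(\zeta')^{-2}\,(\text{Airy main or error})^2\,ds \lesssim \alpha^{1/3}$; since $(\zeta')^{-2} \simeq 1$ near the turning point and grows only polynomially in $\zeta$ for large $\zeta$ (Proposition~\ref{x1-prp}), while $\Ai$ and $M/E$ decay exponentially as $s \to +\infty$ and $\Ai(s), M(s)$ stay bounded (in fact decay) for $s$ in the oscillatory region $s<c$, the dominant contribution comes from $|s| \lesssim 1$ and yields a convergent integral. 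The net effect is $\int_{\Rpos} u_\alpha^2 \lesssim \alpha^{-2/3} \cdot \alpha^{1/3} = \alpha^{-1/3}$ for the error part, and the same scaling for the Airy part — actually here one must be slightly careful: the honest main-term integral $\int (\zeta')^{-2}\Ai(s)^2\,ds$ is $\simeq 1$ (a fixed positive finite number, using $(\zeta')^{-2}\simeq 1$ near the turning point where $\Ai^2$ has most of its mass), giving the clean $\simeq \alpha^{-2/3}$, but the region $\zeta > \zeta_1$ contributes $\int_{\zeta_1}^\infty (\zeta')^{-2}\Ai(\alpha^{2/3}\zeta)^2\,d\zeta$ which, by the exponential decay of $\Ai$ and the polynomial growth of $(\zeta')^{-2}$, is exponentially small in $\alpha$ — so indeed $\int_{\Rpos} u_\alpha^2 \simeq \alpha^{-1/3}$.

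For the matching lower bound in \eqref{u-L2} I would restrict to a fixed compact $\zeta$-interval $[\zeta',\zeta'']$ straddling the turning point $\zeta = 0$ (with $\zeta', \zeta'' \simeq 1$), where $(\zeta')^{-2} \simeq 1$; on this interval $\int (\Ai(\alpha^{2/3}\zeta))^2\,d\zeta$, after the substitution $s = \alpha^{2/3}\zeta$, equals $\alpha^{-2/3}\int_{\alpha^{2/3}\zeta'}^{\alpha^{2/3}\zeta''}\Ai(s)^2\,ds \simeq \alpha^{-2/3}$ since $\int_{-\infty}^{+\infty}\Ai^2$ of any fixed-width-around-zero window is a fixed positive constant and $\alpha^{2/3}\zeta', \alpha^{2/3}\zeta'' \to \mp\infty$. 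The cross term $2\int(\zeta')^{-1}\Ai(\alpha^{2/3}\zeta)\epsilon_\alpha$ and the $\epsilon_\alpha^2$ term are both $\BigO(\alpha^{-1}\cdot\alpha^{-2/3}) = o(\alpha^{-2/3})$ by the same error estimate and Cauchy--Schwarz, so for $\alpha_0$ large enough the main term dominates and $\int_{\Rpos} u_\alpha^2 \gtrsim \alpha^{-2/3} \cdot \alpha^{0}$... — here I should recompute: on the interval of $\zeta$-length $\simeq 1$ the integral is $\gtrsim \alpha^{-2/3}$, not $\alpha^{-1/3}$. The resolution is that the $\Rpos$-integral in \eqref{u-L2} is in the $x$-variable, and $\int_{\Rpos} u_\alpha(x)^2\,dx = \int (\zeta')^{-2}(\cdots)^2\,d\zeta$ — wait, $dx = (\zeta')^{-1} d\zeta$, so $\int u_\alpha^2\,dx = \int (\zeta')^{-1}u_\alpha^2\,d\zeta \cdot$, and $u_\alpha^2 = (\zeta')^{-1}(\Ai+\epsilon)^2$, giving $\int (\zeta')^{-2}(\Ai(\alpha^{2/3}\zeta)+\epsilon_\alpha)^2\,d\zeta$ as stated; restricting to $|\zeta|\lesssim\alpha^{-2/3}$ (the true turning-point scale) where $(\zeta')^{-2}\simeq 1$ gives $\int_{|\zeta|\lesssim\alpha^{-2/3}}\Ai(\alpha^{2/3}\zeta)^2\,d\zeta = \alpha^{-2/3}\int_{|s|\lesssim 1}\Ai(s)^2\,ds \simeq \alpha^{-2/3}$. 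So \eqref{u-L2} should read $\simeq \alpha^{-2/3}$ unless there is an extra $\alpha^{1/3}$ from the large-$\zeta$ oscillatory tail — and indeed the oscillatory region $-b < \zeta < 0$ where $\Ai(\alpha^{2/3}\zeta)$ oscillates with amplitude $\sim (\alpha^{2/3}|\zeta|)^{-1/4}$ contributes $\int_{-b}^{-c\alpha^{-2/3}}(\zeta')^{-2}(\alpha^{2/3}|\zeta|)^{-1/2}\,d\zeta \simeq \alpha^{-1/3}\int(\zeta')^{-2}|\zeta|^{-1/2}\,d\zeta \simeq \alpha^{-1/3}$, which \emph{dominates} the $\alpha^{-2/3}$ turning-point contribution. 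So the correct main term, and the source of the $\alpha^{-1/3}$ in \eqref{u-L2}, is precisely this classically-allowed oscillatory region, where one uses $\int \Ai(t)^2\,dt$ behaving like $\frac{1}{\pi}\int |t|^{-1/2}\,dt$ in an averaged sense together with $(\zeta')^{-2} \simeq |\zeta|^{\text{const}}$ (Proposition~\ref{x1-prp}) — on the interval $[\zeta_1,+\infty)$ this would be a \emph{positive} power giving divergence were it not for exponential decay, but on the bounded interval $(-b,-\alpha^{-2/3})$ it is controlled and $\simeq 1$, yielding $\int_{\Rpos}u_\alpha^2 \simeq \alpha^{-1/3}$.

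The bound \eqref{Uu-L2} then follows from \eqref{u-L2} combined with the integral estimate \eqref{eq:olver_int} of Proposition~\ref{trans-prp}, namely $\int_{\Rpos}U u_\alpha^2 \gtrsim \int_{\Rpos}u_\alpha^2 \simeq \alpha^{-1/3}$. Alternatively, and more self-containedly, one observes that for $x \geq x_1$ one has $U(x)-1 \simeq U(x) \simeq x^\pdeg$, and the portion of the $L^2$ mass of $u_\alpha$ lying in the classically-forbidden region $x > x_0$ — or more robustly, the portion in the oscillatory region $x < x_0$ where $U$ is bounded below away from $0$ by $\co^{-1}x_0'^{\,\pdeg}\simeq 1$ on a subinterval of length $\simeq 1$ — already contributes $\gtrsim \alpha^{-1/3}$ to $\int U u_\alpha^2$, since $U \gtrsim 1$ there and that region carries a fixed fraction of $\int u_\alpha^2$ by the lower-bound computation above. \textbf{The main obstacle} I anticipate is bookkeeping the exponential-decay tails against the polynomial growth of $(\zeta')^{-2}$ uniformly in $\alpha$: one must verify that the large-$\zeta$ contributions to $\int_{\Rpos}u_\alpha^2$ and $\int_{\Rpos}Uu_\alpha^2$ are not merely finite but are $o(\alpha^{-1/3})$ (they are in fact exponentially small), and that the error-term contributions, which carry the explicit $\alpha^{-1}(e^{\lambda\alpha^{-1}J}-1) \lesssim \alpha^{-2}$-type weights times the same growing/decaying Airy factors, are dominated by the main term for $\alpha \geq \alpha_0 \simeq 1$ — all of which hinges on having $J \lesssim 1$ and the $\mathcal{C}^\rho_{x_1}$-regularity of $\zeta'$ from Proposition~\ref{zeta-prp}, so that every implicit constant depends only on $\pdeg$ and $\co$.
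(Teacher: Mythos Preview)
Your approach is essentially the paper's: change to the $\zeta$-variable, split into oscillatory/transition/exponential regions, show that the oscillatory region $\zeta\in(-b,c\alpha^{-2/3})$ gives the dominant $\simeq\alpha^{-1/3}$ contribution (via the $|t|^{-1/2}$ envelope of $\Ai^2$ at $-\infty$), while the transition and exponential regions contribute only $\lesssim\alpha^{-2/3}$; error terms are absorbed using $J\lesssim 1$. After your mid-argument self-correction you land on exactly this picture.

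Two points worth flagging. First, your initial derivation of \eqref{Uu-L2} from \eqref{eq:olver_int} is circular: in the paper, \eqref{eq:olver_int} is \emph{deduced from} Proposition~\ref{L2-prp}, not the other way around. Your ``alternative'' argument (restrict to a subinterval of the oscillatory region where $U\gtrsim 1$) is the correct one and is what the paper does, restricting to $\zeta\in(-b/2,c\alpha^{-2/3})$ so that $x\geq x_{-1}$ and $U(x)\gtrsim x_{-1}^{\pdeg}\simeq 1$. Second, for the \emph{lower} bound in \eqref{u-L2} you need to control the error term in the oscillatory region, not just near the turning point: there $|\epsilon_\alpha|\leq\delta(\alpha)M(\alpha^{2/3}\zeta)=\delta(\alpha)\sqrt{\Ai^2+\Bi^2}$ (since $E\equiv 1$ for $\alpha^{2/3}\zeta\leq c$), and both $\int\Ai^2$ and $\int\Bi^2$ over that region are $\simeq\alpha^{-1/3}$, so the error contributes $\lesssim\delta(\alpha)^2\alpha^{-1/3}$, which is negligible for $\alpha\geq\alpha_0\simeq 1$. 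You gesture at this but don't make it explicit.
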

The second inequality above will be used in the proof of the virial-type bound in Section \ref{s:schroedinger}.

\begin{proof}
We change variable as above and split the integral as follows:
\[\begin{split}
\int_{\Rpos}u_\alpha(x)^2 \,dx 
&= \left(\int_{-b}^{c\alpha^{-2/3}}+\int_{c\alpha^{-2/3}}^{\zeta_1}+\int_{\zeta_1}^{+\infty}\right)u_\alpha(x(\zeta))^2 \, (\zeta')^{-1} \,d\zeta\\
&\eqdef I_{\zosc} + I_{\ztrans} + I_{\zexp},
\end{split}\]
where $\zeta_1=\zeta(x_1)$ as before, and the subscripts stand for \emph{oscillatory}, \emph{transition}, and \emph{exponential(ly decaying)}. 

In the \emph{oscillatory} region $\{\zeta\leq c\alpha^{-2/3}\}$: 
\[
\epsilon_\alpha(x(\zeta)) \leq \delta(\alpha)\sqrt{\Ai(\alpha^{2/3}\zeta)^2 +\Bi(\alpha^{2/3}\zeta)^2}, 
\]
with $\delta(\alpha) \defeq \lambda^{-1}\left(e^{\lambda\alpha^{-1}J}-1\right)$. Recall the asymptotics at $-\infty$ of the Airy functions \cite[pp.\ 392--393]{olverbook}:
\begin{align*}
\Ai(t) &= \frac{1}{\sqrt{\pi}|t|^{1/4}}\cos\left(\frac{2}{3}|t|^{3/2}-\frac{\pi}{4}\right) + \BigO(|t|^{-7/4}),\\
\Bi(t) &= -\frac{1}{\sqrt{\pi}|t|^{1/4}}\sin\left(\frac{2}{3}|t|^{3/2}-\frac{\pi}{4}\right) + \BigO(|t|^{-7/4}).
\end{align*}
If $c'<2c$, then it is easy to deduce that
\[
C^{-1}|c'|^{1/2}\leq\int_{c'}^c \Ai(t)^2 \,dt, \, \int_{c'}^c \Bi(t)^2 \,dt \leq C|c'|^{1/2},
\]
where the constant $C$ is universal.
Therefore, if $b\alpha^{2/3}>2|c|$, then
\[\begin{split}
\int_{-b}^{c\alpha^{-2/3}} \epsilon_\alpha(x(\zeta))^2 \,d\zeta
&\leq \delta(\alpha)^2\alpha^{-2/3}\left(\int_{-b\alpha^{2/3}}^{c}\Ai(t)^2 \,dt +\int_{-b\alpha^{2/3}}^{c}\Bi(t)^2 \,dt\right)\\
&\leq 2C\delta(\alpha)^2b^{1/2}\alpha^{-1/3},
\end{split}\]
and
\[
\int_{-b}^{c\alpha^{-2/3}} \Ai(\alpha^{2/3}\zeta)^2 \,d\zeta = \alpha^{-{2/3}}\int_{-b\alpha^{2/3}}^{c}\Ai(t)^2 \,dt\simeq\alpha^{-{1/3}}.
\]
Since $b, J\simeq1$ and $\zeta'\simeq1$ in the oscillatory region (Proposition \ref{zeta-prp}), there exists $\alpha_0\simeq1$ such that for every $\alpha\geq\alpha_0$ we have
\[
I_{\zosc} \simeq\int_{-b}^{c\alpha^{-2/3}}(\Ai(\alpha^{2/3}\zeta)+\epsilon_\alpha(x(\zeta)))^2 \,d\zeta \simeq\alpha^{-1/3}.
\]

In the \emph{transition} region $\{c\alpha^{2/3}\leq\zeta\leq \zeta_1\}$ we have $\zeta'\simeq1$ by Proposition \ref{zeta-prp}, and
\[
|\epsilon_\alpha(x(\zeta))| \leq\sqrt{2} \, \delta(\alpha) \,\Ai(\alpha^{2/3}\zeta)\lesssim \Ai(\alpha^{2/3}\zeta)\qquad \forall \alpha\geq\alpha_0.
\]
Notice that $\alpha_0\simeq1$ has been fixed above, and hence the bound has the uniformity we want. Therefore
\[\begin{split}
I_{\ztrans}
&\simeq \int_{c\alpha^{-2/3}}^{\zeta_1}(\Ai(\alpha^{2/3}\zeta)+\epsilon_\alpha(x(\zeta))^2 \,d\zeta\\
&\lesssim \alpha^{-2/3}\int_{c}^{\zeta_1\alpha^{2/3}}\Ai(t)^2 \,dt\lesssim \alpha^{-2/3},
\end{split}\]
because $\Ai$ is square-integrable on $\Rpos$.

Finally, observe that in the \emph{exponential} region $\{\zeta\geq\zeta_1\}$ Proposition \ref{x1-prp} gives $\zeta'\simeq\zeta^{(\pdeg-1)/(\pdeg+2)}$, while $|\epsilon_\alpha(x(\zeta))| \lesssim \Ai(\alpha^{2/3}\zeta)\ \forall \alpha\geq\alpha_0$ as in the transition region. Thus
\[\begin{split}
I_{\zexp}
&= \int_{\zeta_1}^{+\infty}(\Ai(\alpha^{2/3}\zeta)+\epsilon_\alpha(x(\zeta))^2 \, (\zeta')^{-2} \,d\zeta\\
&\lesssim \int_{\zeta_1}^{+\infty}\Ai(\alpha^{2/3}\zeta)^2 \, \zeta^{-(2\pdeg-2)/(\pdeg+2)} \,d\zeta\\
&\leq \zeta_1^{-(2\pdeg-2)/(\pdeg+2)}\int_{\zeta_1}^{+\infty}\Ai(\alpha^{2/3}\zeta)^2 \,d\zeta\\
&\lesssim \alpha^{-2/3}\int_{\alpha^{2/3}\zeta_1}^{+\infty}\Ai(\zeta)^2d\zeta\lesssim \alpha^{-2/3}.
\end{split}\]
Putting the estimates for the three integrals together (and enlarging $\alpha_0$ while keeping it $\simeq1$, if needed), we obtain \eqref{u-L2}.

To prove estimate \eqref{Uu-L2} observe that
\[\begin{split}
\int_{\Rpos} U(x) \, u_\alpha(x)^2 \,dx 
&= \int_{-b}^{+\infty} U(x(\zeta))  \, u_\alpha(x(\zeta))^2 \, (\zeta')^{-1} \,d\zeta \\
&\geq \int_{-b/2}^{c\alpha^{-2/3}} U(x(\zeta)) \, u_\alpha(x(\zeta))^2 \, (\zeta')^{-1} \,d\zeta \\ 
&\gtrsim  (x_{-1})^{\pdeg}\int_{-b/2}^{c\alpha^{-2/3}} u_\alpha(x(\zeta))^2 \, (\zeta')^{-1} \,d\zeta,
\end{split}\]
where $\zeta(x_{-1})=-b/2$ as in the proof of Lemma \ref{J-lem}. Now the same argument as before proves that the integral above is $\simeq \alpha^{-1/3}$ for $\alpha\geq\alpha_0$, where $\alpha_0\simeq1$.
\end{proof}

\subsection{Proof of Proposition \ref{trans-prp}}

The space of solutions of \eqref{ode-3} is $2$-dimensional, and hence it is spanned by the functions $u_\alpha$ and $v_\alpha$. If $\alpha\geq\alpha_0$ as in the statement of Proposition \ref{notL2-prp}, then we must have $u=\lambda u_\alpha$ for some $\lambda\in\RR$. Integrating both sides and using Proposition \ref{L2-prp}, we immediately conclude that $\lambda^2\simeq\alpha^{1/3}\int_{\Rpos} u^2$. Hence we have the pointwise estimate
\[\begin{split}
\frac{u(x)^2}{\int_{\Rpos} u^2}
&\simeq  \alpha^{1/3} u_\alpha(x)^2\\
&\lesssim \frac{\alpha^{1/3}}{\zeta'}\left(\Ai(\alpha^{2/3}\zeta)^2+\frac{M(\alpha^{2/3}\zeta)^2}{E(\alpha^{2/3}\zeta)^2}\left(e^{\lambda\alpha^{-1}J}-1\right)^2\right)\\
&\lesssim \frac{\alpha^{1/3}}{\zeta'}\left(\Ai(\alpha^{2/3}\zeta)^2+\frac{M(\alpha^{2/3}\zeta)^2}{E(\alpha^{2/3}\zeta)^2}\right),
\end{split}\]
where the second line follows from the discussion in Section \ref{olver-sec}, and the third one from $\alpha\geq\alpha_0\simeq1$ and Lemma \ref{J-lem}. Observe that we have the trivial bounds $\Ai(x)^2\leq M(x)^2 / E(x)^2$ and $E(x)^2\geq1$, and that
\[
M(x)\lesssim |x|^{-1/4}\qquad\forall x\in\RR.
\]
This inequality follows from the asymptotics at $+\infty$ and $-\infty$ of Airy functions (see, e.g., \cite[p.\ 395]{olverbook}). Putting everything together, we get
\begin{equation}\label{eq:olver_intermediate}
\frac{u(x)^2}{\int_{\Rpos} u^2}\lesssim \frac{\alpha^{1/3}}{\zeta'}M(\alpha^{2/3}\zeta)^2\lesssim \frac{\zeta^{-1/2}}{\zeta'} = \frac{1}{\sqrt{U-1}}.
\end{equation}
The last identity follows from \eqref{eq:der_ch_var}. Now, recalling the definition of $\beta$ in Section \ref{J-sec} and Proposition \ref{beta-prp}, we have
\[
|U(x)-1|=\beta(x)|x-x_0|\simeq |x-x_0|\qquad\forall x\leq x_1.
\]
If $x\geq x_1$, then Proposition \ref{x1-prp} gives $|U(x)-1|\simeq x^\pdeg\geq x_1^{\pdeg-1}|x-x_0|\gtrsim |x-x_0|$. In any case we obtain \eqref{eq:olver_trans}.

Note also that, from \eqref{eq:olver_intermediate} and the fact that $M$ is a bounded function, we obtain
\[
\frac{u(x)^2}{\int_{\Rpos} u^2}\lesssim \frac{\alpha^{1/3}}{\zeta'}.
\]
On the other hand, from Proposition \ref{zeta-prp} we know that $\zeta'(x) \simeq 1$ for $x \in (0,x_1)$, while, by Proposition \ref{x1-prp}, $\zeta'(x) \simeq x^{(\pdeg-1)/3} \gtrsim 1$ for $x \geq x_1$, and in any case \eqref{eq:olver_unif} follows.

Inequality \eqref{eq:olver_int} follows from Proposition \ref{L2-prp} arguing as above.

\section{Schr\"odinger operators}\label{s:schroedinger}

The goal of this section is to prove several precise estimates for eigenvalues and eigenfunctions of one-dimensional Schr\"odinger operators of the form
\[
-\partial_x^2+V,
\]
where the potential $V$ is in a suitable class of perturbations of power laws;  as in Section \ref{s:olver}, the key issue is obtaining \emph{uniform} estimates for all the potentials in a given class. Before delving into our results, let us recall some basic facts and establish notation.

We find it convenient to introduce the set $\GPot$ of potentials $V:\RR\rightarrow\Rnon$ satisfying the following conditions:\begin{enumerate}
\item $V(0)=0$,
\item $V$ is continuous,
\item $V$ is strictly increasing in $\Rpos$ and strictly decreasing in $\RR^{-}$,
\item $\lim_{|x|\rightarrow+\infty}V(x)=+\infty$.
\end{enumerate}

If $V\in \GPot$, then the operator $-\partial_x^2+ V$, defined on test functions, is essentially self-adjoint (see, e.g., \cite{reed-simon-II}, Theorem X.28).
We denote by $\opH^V$ its unique self-adjoint extension.
The spectrum of $\opH^V$ is well-known to consist of a sequence of simple positive eigenvalues tending to $+\infty$ (see, e.g., \cite{berezin-shubin}, Chapter 2). We denote by $E^V_n$ the $n$th eigenvalue (with respect to increasing order and starting from $n=1$) of $\opH^V$ and by $\psi^V_{n}$ a corresponding real-valued $L^2$-normalized eigenfunction. Basic results of Sturm--Liouville theory (cf.\ Appendix) tell us that $\psi_n^V(x)$ has a definite sign (positive or negative) for all $x>0$ sufficiently large; since $\psi^V_{n}$ is defined up to a sign, we choose $\psi_n^V$ so that $\psi_n^V(x)$ is positive for all sufficiently large $x>0$.
Finally, it will be useful to denote by $x_{n}^{V,+}$ (resp. $x_{n}^{V,-}$) the unique positive (resp. negative) solution of $V(x)=E^V_n$.

We recall a basic comparison result, which holds in far greater generality than in the class $\GPot$ (see, e.g., Section $4.5$ of \cite{davies}): if $V \leq c W$ pointwise for some $c \geq 1$, then
\[
\opH^V \leq c \opH^W
\]
in the sense of quadratic forms, whence
\begin{equation}\label{eq:comparison}
E_n^V \leq c E_n^W
\end{equation}
for all $n \in \Npos$.

We now define the classes of potentials we are actually interested in.

\begin{dfn}\label{pot2-dfn}
For $\co, \pdeg \in \Rpos$, we denote by $\Pot^\pdeg(\co)$ the collection of continuous functions $V : \RR \to \Rnon$ which are of class $C^3$ on $\RR \setminus \{0\}$ and satisfy
\begin{gather}
\co^{-1} |x|^{\pdeg} \leq V(x) \leq \co |x|^{\pdeg},\\
\co^{-1} |x|^{\pdeg-1} \leq |V'(x)| \leq \co|x|^{\pdeg-1},\\
|V''(x)| \leq \co |x|^{\pdeg-2}, \\
\label{eq:thirdderivative} |V'''(x)| \leq \co |x|^{\pdeg-3}
\end{gather}
for all $x \in \RR \setminus \{0\}$. Let $\Pot^\pdeg_{\even\convex}(\co)$ be the collection of even and convex functions in $\Pot^\pdeg(\co)$. Finally, let $\MPot^\pdeg(\co)$ be the collection of functions of the form $\scale V$ where $\scale \in \Rpos$ and $V \in \Pot^\pdeg(\co)$ (here $\mathcal{M}$ stands for ``multiples'').
\end{dfn}

Observe that $\MPot^\pdeg(\co)\subseteq\GPot$, so that the objects introduced above may be attached to any $V\in\MPot^\pdeg(\co)$.

As in the previous section, we fix $\pdeg,\co \in \Rpos$ with $\pdeg > 1$.
Before proceeding, let us highlight that, since our estimates will depend on $V$ only through the parameters $\pdeg$ and $\co$, in the following we will continue to use the symbols $\lesssim$ and $\simeq$ to conceal a constant depending only on them, and write $\simeq_k$, $\lesssim_k$ when the constant is allowed to depend on an additional parameter $k$. 

We now state the bounds that will be proved in the course of the section and will be crucial in the subsequent developments.

\begin{prp}[Riesz transform bound]\label{apriori_scaled-prp}
Let  $V \in \MPot^\pdeg(\co)$. Then, for all $k \in \NN$, the inequality
\[
\| V^k f \|_2  \lesssim_k \| (\opH^V)^k f \|_2
\]
holds for every $f$ in the domain of $(\opH^V)^k$.
\end{prp}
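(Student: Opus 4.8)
The plan is to establish the inequality $\|V^k f\|_2 \lesssim_k \|(\opH^V)^k f\|_2$ by an induction on $k$, reducing the general case $V = \scale W$ with $W \in \Pot^\pdeg(\co)$ to a scale-invariant statement. First I would observe that, upon replacing $V$ by $\scale V$ and rescaling the $x$-variable by $x \mapsto \scale^{1/(\pdeg+2)} x$ (which conjugates $-\partial_x^2 + \scale V(x)$ to $\scale^{2/(\pdeg+2)}(-\partial_x^2 + \widetilde V(x))$ for a rescaled potential $\widetilde V$ still lying in a fixed class $\Pot^\pdeg(\co')$ with $\co'$ controlled by $\co$), it suffices to prove the bound for $V \in \Pot^\pdeg(\co)$ alone, since both sides scale homogeneously in $\scale$; here one must check that the rescaled potential still satisfies Definition \ref{pot2-dfn} with a uniformly controlled constant, using that the derivative bounds in that definition are themselves scale-invariant up to the exponent shift.

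The base case $k=0$ is trivial, and $k=1$ amounts to the statement $\|Vf\|_2 \lesssim \|(\opH^V)f\|_2$, equivalently that $V(\opH^V)^{-1}$ is bounded on $L^2$. For this I would write $V = \opH^V - (-\partial_x^2)$ and note that it suffices to control $\|\partial_x^2 f\|_2$ by $\|(\opH^V)f\|_2$; expanding $\|(\opH^V)f\|_2^2 = \|\partial_x^2 f\|_2^2 + \|Vf\|_2^2 - 2\langle \partial_x^2 f, Vf\rangle$ and integrating by parts the cross term as $-\langle \partial_x^2 f, Vf\rangle = \int |\partial_x f|^2 V + \int \partial_x f\, \partial_x V\, f \geq \int |\partial_x f|^2 V - \tfrac12\int |\partial_x f|^2 V - \tfrac12 \int \frac{|V'|^2}{V} f^2$, and using $|V'|^2/V \lesssim |x|^{\pdeg - 2} \lesssim 1 + V$ (valid because $\pdeg > 1$), one gets the cross term bounded below by $-C\|f\|_2^2 - \tfrac14\|Vf\|_2^2$. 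Feeding this back in and absorbing yields $\|\partial_x^2 f\|_2^2 + \|Vf\|_2^2 \lesssim \|(\opH^V)f\|_2^2 + \|f\|_2^2$, and the $\|f\|_2^2$ term is then removed by a standard spectral-gap argument (the spectrum of $\opH^V$ is contained in $[E_1^V, \infty)$ with $E_1^V \simeq 1$ by the comparison \eqref{eq:comparison}, so $\|f\|_2 \lesssim \|(\opH^V)f\|_2$).

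For the inductive step, suppose the bound holds up to $k-1$. The natural idea is to commute: $V^k f = V^{k-1}\bigl((\opH^V) - (-\partial_x^2)\bigr)f$, so it is enough to bound $\|V^{k-1}(\opH^V)f\|_2$ and $\|V^{k-1}\partial_x^2 f\|_2$ by $\|(\opH^V)^k f\|_2$. The first is handled by the inductive hypothesis applied to $(\opH^V)f$ in place of $f$. The second requires an elliptic-regularity / interpolation estimate: one must show $\|V^{k-1}\partial_x^2 f\|_2 \lesssim_k \|(\opH^V)^k f\|_2$, which can be obtained by iterating commutator identities of the schematic form $[\opH^V, V^j] = -2j V^{j-1}V' \partial_x - (V^j)''$ together with the derivative controls $|V'| \lesssim |x|^{\pdeg-1}$, $|V''| \lesssim |x|^{\pdeg-2}$, $|V'''|\lesssim|x|^{\pdeg-3}$ from Definition \ref{pot2-dfn}, at each stage dominating factors like $|x|^{a}$ by $1 + V^{a/\pdeg}$ and absorbing the resulting lower powers of $V$ into already-estimated terms, and using the $k=1$ estimate to trade two derivatives for one power of $\opH^V$. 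Alternatively, and perhaps more cleanly, one can prove the symmetric a priori inequality $\|(-\partial_x^2)^a V^b f\|_2 \lesssim \|(\opH^V)^{a+b} f\|_2$ for all $a,b \in \NN$ simultaneously by a double induction, which is how I would organize the write-up.

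The main obstacle, I expect, is precisely this book-keeping in the inductive step: unlike the homogeneous harmonic-oscillator case (where one has the clean algebra of creation/annihilation operators and exact commutation relations), here $V$ is only a $C^3$ perturbation of a power law, so $V''$ and $V'''$ are not themselves controlled by powers of $\opH^V$ in any exact way, only by the pointwise bounds $|x|^{\pdeg-2}, |x|^{\pdeg-3}$, which must each time be converted into $1 + V^{\text{something}}$ — and this conversion costs a $+1$ that has to be re-absorbed via the spectral gap. Keeping track of which commutator terms are genuinely lower order, and verifying that the "error" powers of $V$ and $\partial_x$ always fall within the range covered by the inductive hypothesis (this is where $\pdeg > 1$ is used, to guarantee $|x|^{\pdeg-j} \lesssim 1 + V$ for $j \le 2$), is the delicate part; the estimate only needs $C^3$ regularity of $V$ because, after two commutations with $V$, no derivative of order higher than three ever appears.
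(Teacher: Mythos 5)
Your overall route is the same as the paper's: reduce by scaling to $V \in \Pot^\pdeg(\co)$, prove the case $k=1$ by expanding $\|\opH^V f\|_2^2$ and integrating the cross term by parts, then induct on $k$ using the commutator $[\opH^V,V^k]$. Two points, however, are genuine gaps as written.

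First, in the $k=1$ step your pointwise bound $|V'|^2/V \lesssim |x|^{\pdeg-2} \lesssim 1+V$ is false near the origin when $1<\pdeg<2$: there $|x|^{\pdeg-2}\to\infty$ while $1+V\to 1$, and $\pdeg>1$ alone does not save it. This is exactly why the paper splits into the cases $1<\pdeg\leq 2$ (where it uses $|V'|\lesssim 1+V^{1/2}$, valid since $\pdeg-1\leq\pdeg/2$, and estimates $\|(\partial_x V)f\|_2$ directly instead of weighting the arithmetic--geometric mean inequality by $V$ and $V^{-1}$) and $\pdeg>2$ (where a second integration by parts produces $\partial_x^2 V$ and $|x|^{\pdeg-2}\lesssim 1+V$ does hold). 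Your step can be repaired, e.g.\ by the unweighted estimate $|\partial_x f\cdot V'f|\leq \epsilon^{-1}|\partial_x f|^2+\epsilon|V'|^2f^2$ together with $|V'|^2\lesssim 1+V^2$ and absorption, but as stated it fails.

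Second, and more seriously, the inductive step is under-specified precisely where the work is. The commutator produces the first-order term $(\partial_x(V^k))(\partial_x f)$, and the quantity $\|V^k\partial_x f\|_2$ --- $k$ full powers of $V$ together with one derivative --- is covered neither by the inductive hypothesis nor by the $k=1$ estimate, so it cannot simply be ``absorbed into already-estimated terms'' by pointwise domination. The paper closes this loop with a specific self-improving absorption: it writes $V^k\partial_x f=\partial_x(V^kf)-(\partial_x(V^k))f$, bounds $\|\partial_x(V^kf)\|_2\leq\|(\opH^V)^{1/2}(V^kf)\|_2\lesssim\|\opH^V(V^kf)\|_2$, and uses the Young-type inequality $|\partial_x(V^k)|\lesssim_k\epsilon^{1-\pdeg k}+\epsilon V^k$ so that the resulting term $\epsilon\|\opH^V(V^kf)\|_2$ can be reabsorbed into the left-hand side of the auxiliary estimate $\|\opH^V(V^kf)\|_2\lesssim_k\|(\opH^V)^{k+1}f\|_2$, from which $\|V^{k+1}f\|_2\lesssim\|\opH^V(V^kf)\|_2$ follows by the case $k=1$. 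Your alternative double induction on $\|(-\partial_x^2)^aV^bf\|_2$ does not obviously close either, since the commutators generate terms with an odd number of derivatives (such as $V^{b-1}V'\partial_x f$) that fall outside the $(a,b)$ grid. Minor further points: the paper needs a separate ad hoc treatment of $k=2$ before the induction starts; the integrations by parts are justified by working on the core spanned by the rapidly decaying eigenfunctions; and, contrary to your closing remark, no third derivative of $V$ is ever used, since $\partial_x^2(V^k)$ involves only $V'$ and $V''$.
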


\begin{prp}[virial-type bound]\label{virial-prp}
For all $V \in \MPot^\pdeg(\co)$
and $n \in \Npos$,
\[
\int_\RR V(x) |\psi^V_{n}(x)|^2 \,dx \simeq E_n^V.
\]
\end{prp}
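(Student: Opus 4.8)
The plan is to prove the two-sided estimate $\int_\RR V |\psi_n^V|^2 \simeq E_n^V$ by reducing to the one-parameter family $-\partial_x^2 + \scale U$ with $U \in \Pot^{2\hpdeg}_{\even\convex}(\co)$ (or, more generally, $U\in\Pot^\pdeg(\co)$) and $\scale\in\Rpos$, and then exploiting the estimate \eqref{eq:olver_int} from Proposition \ref{trans-prp} together with a rescaling. The upper bound is the easy half: since $V \lesssim \opH^V$ as quadratic forms is false in general, one instead writes $\int_\RR V|\psi_n^V|^2 = \langle V\psi_n^V,\psi_n^V\rangle \le \langle \opH^V\psi_n^V,\psi_n^V\rangle = E_n^V$, because $\int |(\psi_n^V)'|^2 \ge 0$; this already gives one direction with constant $1$. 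So the content is entirely in the lower bound $\int_\RR V|\psi_n^V|^2 \gtrsim E_n^V$.

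For the lower bound, first I would perform a scaling that turns an eigenfunction problem into the setting of Section \ref{s:olver}. Given $V\in\MPot^\pdeg(\co)$, write $V = \scale U$ with $U\in\Pot^\pdeg(\co)$, and fix the eigenvalue $E = E_n^V$. Using the change of variables $x = a t$ with $a$ chosen so that $a^{-2} = $ (appropriate power of $E$), one conjugates $\opH^V$ to an operator of the form $a^{-2}(-\partial_t^2 + \alpha^2(\tilde U(t) - 1))$ on the half-line, where $\tilde U$ is a rescaled version of $U$ lying in the same class $\Pot^\pdeg(\co')$ (by the scale-invariance built into Definition \ref{pot-dfn}), $\alpha$ is a large parameter comparable to a positive power of $E$ (and $n$, via the eigenvalue asymptotics), and $1$ is the normalized energy. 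By evenness of $V$, one may restrict attention to $\Rpos$ after symmetrizing; the eigenfunction $\psi_n^V$ restricted to $\Rpos$ is (a constant multiple of) a square-integrable solution of equation \eqref{ode-3} for this $\tilde U$ and this $\alpha$, provided $\alpha \ge \alpha_0$, i.e., provided $n$ is large enough. For the finitely many small values of $n$ not covered, the estimate $\int V|\psi_n^V|^2 \simeq_n E_n^V$ holds trivially (both sides are positive and finite, and continuity/compactness arguments give uniformity over the compact parameter range, using the scale-invariance to reduce to $\scale = 1$); more carefully, one checks that the number of exceptional $n$ is bounded in terms of $\pdeg,\co$ only, so the implicit constants stay uniform.

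Once $\psi_n^V|_{\Rpos}$ is identified with a recessive Olver solution $u$, the estimate \eqref{eq:olver_int}, namely $\int_{\Rpos} \tilde U u^2 \gtrsim \int_{\Rpos} u^2$, is exactly what is needed: translating back through the scaling gives $\int_{\Rpos} V |\psi_n^V|^2 \gtrsim E_n^V \int_{\Rpos} |\psi_n^V|^2$, and by evenness the same holds on all of $\RR$, while $\int_\RR |\psi_n^V|^2 = 1$. The main obstacle I anticipate is bookkeeping the scaling carefully: one must verify that the rescaled potential genuinely lands in $\Pot^\pdeg(\co')$ with $\co'$ depending only on $\co,\pdeg$ (this is where the homogeneity of the bounds in Definition \ref{pot-dfn} is used — each derivative bound scales correctly), that the large-parameter threshold $\alpha \ge \alpha_0 \simeq 1$ translates into a lower bound on $n$ that is uniform in the class, and that the eigenvalue $E_n^V$ has the expected comparability to a power of $n$ (which follows from the comparison inequality \eqref{eq:comparison} sandwiching $V$ between multiples of $|x|^\pdeg$, whose eigenvalue asymptotics are classical). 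None of these steps is deep, but assembling them so that all constants depend only on $\pdeg$ and $\co$ requires attention; this uniformity is precisely the recurring theme of Sections \ref{s:olver} and \ref{s:schroedinger}.
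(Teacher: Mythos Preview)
Your overall strategy matches the paper's: the upper bound is exactly the quadratic-form observation you give, and for the lower bound the paper also rescales (via Remark~\ref{rem:scaling}) so that $V=E_n^V U$ with $U\in\Pot^\pdeg(\co)$, whence $\psi_n^V$ satisfies $u''=\alpha^2(U-1)u$ with $\alpha=\sqrt{E_n^V}$, and then invokes \eqref{eq:olver_int} when $\alpha\ge\alpha_0$. One small slip: potentials in $\MPot^\pdeg(\co)$ are \emph{not} assumed even, so you cannot ``symmetrize''; instead apply Proposition~\ref{trans-prp} separately to $\psi_n^V|_{\Rpos}$ (with $U|_{\Rpos}\in\Pot^\pdeg_+(\co)$) and to $\psi_n^V(-\cdot)|_{\Rpos}$ (with $U(-\cdot)|_{\Rpos}\in\Pot^\pdeg_+(\co)$), then add. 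This is cosmetic.

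The genuine gap is your treatment of the small-$\alpha$ regime. You assert that for the finitely many exceptional $n$ the bound follows by ``continuity/compactness'' after reducing to $\scale=1$. But even after that reduction, the class $\Pot^\pdeg(\co)$ is infinite-dimensional and not compact in any topology that would make $V\mapsto \int V|\psi_n^V|^2/E_n^V$ continuous; positivity of this ratio for each individual $V$ does not yield a uniform positive lower bound over the class. The paper handles $E_n^V\le\alpha_0^2$ by a short variational trick instead: for $t>0$ set $U_{[t]}=t^{-\pdeg}U(t\cdot)\in\Pot^\pdeg(\co)$, so that by comparison $1\simeq E_1^{U_{[t]}}$, and test the corresponding quadratic form against $T_t\psi_n^V$ to obtain
\[
1\lesssim t^2\int_\RR|(\psi_n^V)'|^2 + t^{-\pdeg}\int_\RR U|\psi_n^V|^2 \le t^2 E_n^V + t^{-\pdeg}\int_\RR U|\psi_n^V|^2 \lesssim t^2 + t^{-\pdeg}\int_\RR U|\psi_n^V|^2,
\]
and then choose $t\simeq1$ small enough to absorb the $t^2$ term. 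This gives $\int_\RR U|\psi_n^V|^2\gtrsim 1$ with constants depending only on $\pdeg,\co$, closing the gap your compactness sketch leaves open.
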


\begin{prp}[pointwise bounds]\label{eigenfunctions-prp}
There exist $\delta, \kappa \simeq 1$ such that, for all $V \in \MPot^\pdeg(\co)$
and $n \in \Npos$,
\[
|\psi^V_{n}(x)|^2 \lesssim \begin{cases}
|x_n^{V,\pm}|^{-2/3} \, (E_n^V)^{1/6} &\text{for all $x \in \RR$,}\\
|x_{n}^{V,\pm}|^{-1/2} \, |x-x^{V,\pm}_{n}|^{-1/2} &\text{if $\pm x >0$,}\\
(E_n^V)^{1/2} \exp(-\delta |x| \, V(x)^{1/2} ) &\text{if $\pm x \geq \kappa |x_{n}^{V,\pm}|$}
\end{cases}
\]
and moreover
\begin{equation}\label{eq:trans_eigen_lb}
(E_n^V)^{1/2} |x_n^{V,\pm}|  \gtrsim 1.
\end{equation}
\end{prp}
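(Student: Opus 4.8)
The plan is to deduce Proposition \ref{eigenfunctions-prp} from the analysis of Section \ref{s:olver} by rescaling. Given $V \in \MPot^\pdeg(\co)$, write $V = \scale W$ with $W \in \Pot^\pdeg(\co)$, and fix an eigenvalue $E = E_n^V$ with $L^2$-normalized eigenfunction $\psi = \psi_n^V$. The eigenvalue equation $-\psi'' + \scale W \psi = E \psi$ becomes, after substituting $x = \mu \tilde x$ with a suitable dilation parameter $\mu$, an equation of the form $u''(\tilde x) = \alpha^2 (U(\tilde x) - 1) u(\tilde x)$ on $\Rpos$, where $U(\tilde x) = V(\mu \tilde x)/E$ and $\alpha^2 = \mu^2 E$. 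The point is to choose $\mu$ so that (a) $U$ lands in $\Pot^\pdeg_+(\co')$ with $\co' \simeq 1$, which forces $\mu^\pdeg \simeq E/\scale$ (up to constants, $\mu$ is comparable to $x_n^{V,+}$), and (b) with this choice $\alpha = \mu \sqrt{E} \simeq E^{1/2} |x_n^{V,+}|$; one then needs $\alpha \geq \alpha_0$ to apply Proposition \ref{trans-prp}. The restriction of $\psi$ to $\Rpos$, once rescaled, is an $L^2$ solution of the rescaled ODE, so estimates \eqref{eq:olver_trans}, \eqref{eq:olver_unif}, \eqref{eq:olver_int} apply; the case of $\RR^-$ is symmetric (it is not quite a reflection since $V$ need not be even, but $x \mapsto -x$ turns $V(x)$ into $V(-x) \in \MPot^\pdeg(\co)$, so the same argument gives the $x_n^{V,-}$ bounds).

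The first task is therefore to establish \eqref{eq:trans_eigen_lb}, i.e.\ $\alpha \gtrsim 1$, which is what lets us invoke Proposition \ref{trans-prp} with its uniform threshold. For this I would first get two-sided bounds on $E_n^V$: comparing $V$ with $\co^{\pm1}|x|^\pdeg$ via \eqref{eq:comparison} reduces matters to the homogeneous potential $|x|^\pdeg$, whose $n$th eigenvalue scales like $n^{2\pdeg/(\pdeg+2)}$ by the usual dilation argument (and is $\simeq 1$ already for $n=1$, uniformly). Since $|x_n^{V,\pm}|^\pdeg \simeq V(x_n^{V,\pm}) = E_n^V$, we get $(E_n^V)^{1/2}|x_n^{V,\pm}| \simeq (E_n^V)^{1/2 + 1/\pdeg} \gtrsim E_1^V{}^{1/2+1/\pdeg} \simeq 1$ provided we check $E_1^V \gtrsim 1$ uniformly in the class — which follows from the lower bound $V \geq \co^{-1}|x|^\pdeg$ and the uncertainty principle (or the $n=1$ case of the homogeneous scaling). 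Note $1/2 + 1/\pdeg > 0$, so this gives \eqref{eq:trans_eigen_lb}. Having this, $\alpha = \mu\sqrt E \simeq E^{1/2}|x_n^{V,+}| \gtrsim 1$ up to a constant, and if $\alpha$ fails to reach $\alpha_0$ we absorb the finitely-many low-lying eigenvalues into the implicit constants (there are $\BigO(1)$ of them, uniformly, by the two-sided eigenvalue bounds, and for each the eigenfunction is controlled by elementary ODE/Sobolev estimates).

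Once Proposition \ref{trans-prp} applies to the rescaled solution $u$, unwinding the substitution gives the three pointwise bounds. From \eqref{eq:olver_unif}, $u(\tilde x)^2 \lesssim \alpha^{1/3}\int u^2$; translating back, $\psi(x)^2 \lesssim \mu^{-1}\alpha^{1/3} \simeq \mu^{-1}(\mu\sqrt E)^{1/3} = \mu^{-2/3} E^{1/6} \simeq |x_n^{V,+}|^{-2/3}(E_n^V)^{1/6}$, which is the first case. From \eqref{eq:olver_trans}, $u(\tilde x)^2 \lesssim |\tilde x - x_0|^{-1/2}\int u^2$ with $U(x_0)=1$, i.e.\ $x_0 = x_n^{V,+}/\mu \simeq 1$; back in the $x$ variable this is $\psi(x)^2 \lesssim \mu^{-1}(\mu^{-1}|x - x_n^{V,+}|)^{-1/2}\cdot \mu = \mu^{-1/2}|x-x_n^{V,+}|^{-1/2} \simeq |x_n^{V,+}|^{-1/2}|x-x_n^{V,+}|^{-1/2}$, the second case. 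For the exponential bound one returns to Olver's representation (Theorem \ref{olver-thm}): for $\tilde x$ well past the turning point ($\tilde x \geq \kappa x_0$ with $\kappa\simeq 1$, equivalently $x \geq \kappa|x_n^{V,+}|$) the recessive solution $u_\alpha$ is bounded by $(\zeta')^{-1/2} M(\alpha^{2/3}\zeta)/E(\alpha^{2/3}\zeta)$ which, since $\Ai(s)/\sqrt{\Bi(s)/\Ai(s)} \lesssim \Ai(s) \lesssim e^{-\frac23 s^{3/2}}$ for $s$ bounded below, decays like $\exp(-\tfrac23\alpha\zeta(\tilde x)^{3/2})$; using $\alpha \zeta(\tilde x)^{3/2} = \mu\sqrt E \cdot \tfrac32\int_{x_0}^{\tilde x}\sqrt{U-1} = \tfrac32\sqrt E\int_{x_n^{V,+}}^x\sqrt{V/E - 1}\,dy \gtrsim \sqrt E\, x\sqrt{V(x)/E} \simeq x V(x)^{1/2}$ for $x$ in this range (here I use $V(y) \geq \co^{-1} y^\pdeg$ to bound the integral below by a constant times $x\sqrt{V(x)}$), together with the normalization factor $\sqrt{\int u^2}\cdot \mu^{-1/2}$ and \eqref{u-L2}, gives the claimed $(E_n^V)^{1/2}\exp(-\delta|x|V(x)^{1/2})$ after choosing $\delta\simeq 1$ small. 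The main obstacle I anticipate is the bookkeeping of the rescaling constants — verifying that the rescaled potential $U$ genuinely lies in $\Pot^\pdeg_+(\co')$ with $\co'$ depending only on $\co,\pdeg$ (this uses that $V \in \MPot^\pdeg(\co)$ controls all derivatives up to order three with the correct powers, so the dilation is exactly scale-invariant at the level of the class) and keeping track of the powers of $\mu$ through the change of variables in the exponential estimate, where one must also confirm that the integral $\int_{x_n^{V,+}}^x\sqrt{V(y)/E-1}\,dy$ grows at least like $x V(x)^{1/2}$ uniformly, which is where convexity/monotonicity of $V$ and the lower bound $V(y)\gtrsim y^\pdeg$ are used.
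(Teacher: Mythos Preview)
Your strategy matches the paper's: reduce by scaling (the paper packages this as Remark~\ref{rem:scaling}) so that $(E_n^V)^{-1}V \in \Pot^\pdeg(\co)$, then read off the uniform and transition-point bounds from Proposition~\ref{trans-prp} and unwind. Two points where the paper's execution differs are worth flagging.

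First, your argument for \eqref{eq:trans_eigen_lb} writes ``$|x_n^{V,\pm}|^\pdeg \simeq V(x_n^{V,\pm}) = E_n^V$'' and ``$E_1^V \gtrsim 1$ uniformly in the class'', but both are false for general $V = \scale W \in \MPot^\pdeg(\co)$ (they scale with $\scale$); they hold only after first reducing to $V \in \Pot^\pdeg(\co)$. The paper does exactly this, invoking the scale-invariance of $(E_n^V)^{1/2}|x_n^{V,\pm}|$ under Remark~\ref{rem:scaling}, so the fix is just to make that reduction explicit before running your comparison with $|\cdot|^\pdeg$.

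Second, for the exponential-decay bound the paper does \emph{not} go through Olver: it quotes a separate elementary comparison (Titchmarsh, \S8.2) giving
\[
|\psi(x')| \leq |\psi(x)|\exp\Bigl(-\int_x^{x'}\sqrt{V-E_n^V}\Bigr) \qquad (x' > x > x_n^{V,+}),
\]
valid with no lower bound on $\alpha$, and combines it with the already-proved uniform $L^\infty$ bound. Your Olver-based derivation recovers the same exponential factor but only when $\alpha \geq \alpha_0$; for the finitely many small-$\alpha$ eigenvalues, ``absorb into the implicit constants'' cannot handle an unbounded tail in $x$, and your ``elementary ODE'' aside would in fact have to reproduce the Titchmarsh convexity argument. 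The paper's route is cleaner here, and it also lets the transition-point bound in the small-$\alpha$ regime be assembled by patching the uniform bound (for $|x| \leq \kappa|x_n^{V,\pm}|$, where $|x - x_n^{V,\pm}| \lesssim 1$) with the exponential bound (for $|x| \geq \kappa|x_n^{V,\pm}|$, where the decay beats $|x-x_n^{V,\pm}|^{-1/2}$).
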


The notation $\pm$ in the latter proposition means that the two sets of inequalities obtained by replacing every occurrence of $\pm$ with either $+$ or $-$ hold true. 

\begin{rem}\label{rem:weaker-unif}
In Section \ref{s:grushin} we will only need the weaker $L^\infty$ bound
\[
|\psi^V_{n}(x)|^2 \lesssim |x_n^{V,\pm}|^{-1/2} \, (E_n^V)^{1/4},
\]
which may be derived from the one contained in Proposition \ref{eigenfunctions-prp} using \eqref{eq:trans_eigen_lb}.
\end{rem}

The next two remarks are important for the proofs of the propositions above.

\begin{rem}\label{rem:scaling}
Let $t \in \Rpos$, and let $T_t : L^2(\RR) \to L^2(\RR)$ be the isometric isomorphism defined by $T_t f(x) = \sqrt{t} f(tx)$. Assume that $V \in \MPot^\pdeg(\co)$ and define
\[
V_t = t^2 V(t \cdot).
\]
Then it is easily seen that $V_t \in \MPot^\pdeg(\co)$ as well (indeed, if $V = \scale W$ with $W \in \Pot^\pdeg(\co)$, then $t^{-\pdeg} W(t\cdot) \in \Pot^\pdeg(\co)$ and $V_t = \scale t^{2+\pdeg} t^{-\pdeg} W(t\cdot)$), and moreover
\[
\opH^{V_t} T_t = t^2 T_t \opH^V,
\]
whence, for all $n \in \Npos$,
\[
\psi_n^{V_t}= T_t\psi_n^V, \qquad
E_n^{V_t} = t^2 E_n^V, \qquad
x_n^{V_t,\pm} = t^{-1} x_n^{V,\pm}.
\]
From these formulas it is easily checked that the estimates in Propositions \ref{apriori_scaled-prp}, \ref{virial-prp} and \ref{eigenfunctions-prp} are invariant under the ``scaling'' $V \mapsto V_t$ (implicit constants included). Hence in proving those Propositions it will be enough to consider a suitably chosen scaled version $V_t$ in place of the original $V$. 
Indeed, note that:
\begin{itemize}
\item it is possible to choose $t$ so that $V_t \in \Pot^\pdeg(\co)$ (if $V = \scale W$ with $W \in \Pot^\pdeg(\co)$, then take $t = \scale^{-1/(\pdeg+2)}$);
\item for all $n \in \Npos$, there is also a choice of $t$ so that $(E_n^{V_t})^{-1} V_t \in \Pot^\pdeg(\co)$ (if $V = \scale W$ with $W \in \Pot^\pdeg(\co)$, then take $t = (E_n^V/\scale)^{1/\pdeg}$).
\end{itemize}
\end{rem}

\begin{rem}\label{rem:inverting}
A similar argument, exploiting the isometric isomorphism $S$ of $L^2(\RR)$ given by $Sf(x) = f(-x)$ shows that, for the potential $\check V = V(-\cdot)$, the following identities hold:
\[
|\psi_n^{\check V}(x)| = |\psi_n^{V}(-x)|, \qquad E_n^{\check V} = E_n^V, \qquad x_n^{\check V,\pm} = - x_n^{V,\mp}.
\]
Consequently it is enough to prove the pointwise estimates of Proposition \ref{eigenfunctions-prp} for $x \geq 0$.
\end{rem}

\subsection{Proof of the lower bound \eqref{eq:trans_eigen_lb}}

By Remark \ref{rem:scaling}, we may assume that $V \in \Pot^\pdeg(\co)$. Then $V(x) \geq \co^{-1} |x|^\pdeg$ and, by comparison \eqref{eq:comparison},
\[
E_n^V \geq \co^{-1} E_n^{|\cdot|^\pdeg} \gtrsim 1.
\]
Since $V(x) \leq \co|x|^\pdeg$, we also get
\[
|x_n^{V,\pm}| \gtrsim (E_n^V)^{1/\pdeg} \gtrsim 1.
\]
and the conclusion follows.

\subsection{Eigenfunction estimates: uniform bounds}\label{ss:eeunifbd}

Here we prove the first estimate of Proposition \ref{eigenfunctions-prp}. By Remark \ref{rem:scaling}, it is enough to consider the case where $V = E_n^V U$ for some $U \in \Pot^\pdeg(\co)$, hence the eigenfunction $\psi_n^V$ solves the ODE
\[
u'' = \alpha^2 (U - 1) u,
\]
where $\alpha = \sqrt{E_n^V}$. Note that, in this case,
$|x_n^{V,\pm}| \simeq 1$, 
so we are reduced to proving the estimate
\begin{equation}\label{eq:eigen_unif_rid}
\|\psi_n^V\|_\infty^2 \lesssim (E_n^V)^{1/6}.
\end{equation}

By Proposition \ref{trans-prp} (applied to $\psi_n^V(\pm \cdot)$), there exists $\alpha_0 \simeq 1$ so that \eqref{eq:eigen_unif_rid} holds whenever $E_n^V \geq \alpha_0^2$.

Suppose instead that $E_n^V \leq \alpha_0^2$.
Since $\psi_n^V$ is $L^2$-normalized, using the fundamental theorem of calculus we obtain
\[\begin{split}
\|\psi_n^V\|_\infty^2 &\leq 2 \|(\psi_n^V)'\|_2 \leq 2\left(\|(\psi_n^V)'\|_2^2 + \int_\RR V |\psi_n^V|^2\right)^{1/2}\\
&\leq 2 (E_n^V)^{1/2}\lesssim (E_n^V)^{1/6}.
\end{split}\]

\subsection{Eigenfunction estimates: exponential decay}
Exponential decay at $\infty$ of Schr\"odinger eigenfunctions is a well-understood phenomenon (also in several variables, see \cite{agmon}). For our purposes, it is enough to combine the uniform bound and the following one-dimensional estimate.

\begin{thm}
Assume that $V \in \GPot$. Then, for all $n \in \Npos$ and all $x' > x > \pm x_n^{V,\pm}$,
\[
|\psi_n^V(\pm x')| \leq |\psi_n^V(\pm x)| \, \exp\left(- \left|\int_{\pm x}^{\pm x'} \sqrt{V-E_n^V}\right|\right).
\]
\end{thm}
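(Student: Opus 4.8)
The plan is to prove the exponential-decay bound by a barrier/comparison argument for the second-order ODE satisfied by $\psi_n^V$ in the classically forbidden region. Since $V \in \GPot$, the eigenfunction $u = \psi_n^V$ solves $u'' = (V - E_n^V) u$, and for $x > x_n^{V,+}$ the coefficient $W(x) := V(x) - E_n^V$ is strictly positive and increasing (because $V$ is strictly increasing on $\Rpos$ and tends to $+\infty$). By Remark \ref{rem:inverting} it suffices to treat the right-hand side $x > 0$; the estimate for $x < 0$ follows by applying the result to $\check V$. Also, since $u \in L^2(\Rpos)$ and $u$ is eventually positive (by our sign convention), I would first record that $u > 0$ and $u' < 0$ on $(x_n^{V,+}, +\infty)$: indeed if $u'(x_1) \geq 0$ at some point $x_1 > x_n^{V,+}$ with $u(x_1) > 0$, then $u'' = W u > 0$ forces $u'$ to stay positive and $u$ to grow at least linearly thereafter, contradicting square-integrability; a short argument upgrades ``eventually positive'' to ``positive on the whole forbidden half-line'', and then $u'' > 0$ gives $u'$ increasing, hence $u' < 0$ throughout (it cannot become nonnegative, again by the growth argument).

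**Main step.** With $u > 0$ and $u' < 0$ on $(x_n^{V,+}, \infty)$, I would study the logarithmic derivative $w := -u'/u > 0$. It satisfies the Riccati equation $w' = w^2 - W$ (differentiate $w = -u'/u$: $w' = -u''/u + (u'/u)^2 = -W + w^2$). The claimed bound is exactly $w(x) \geq \sqrt{W(x)}$ for all $x > x_n^{V,+}$, since then for $x' > x > x_n^{V,+}$,
\[
\log\frac{u(x)}{u(x')} = \int_x^{x'} w \geq \int_x^{x'} \sqrt{W} = \int_x^{x'} \sqrt{V - E_n^V},
\]
which is the assertion (for $x, x' \geq x_n^{V,+}$; the case $x = x_n^{V,+}$ follows by continuity, and for $x$ slightly larger the stated hypothesis $x > x_n^{V,+}$ is what is used). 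To prove $w \geq \sqrt{W}$: suppose $w(x_1) < \sqrt{W(x_1)}$ at some $x_1$. Then at $x_1$, $w' = w^2 - W < 0$, so $w$ is decreasing there; moreover as long as $w < \sqrt{W}$ we have $w' = w^2 - W < 0$, so $w$ keeps decreasing, while $\sqrt{W}$ is nondecreasing (as $W$ is increasing) — hence $w < \sqrt{W}$ persists for all $x \geq x_1$ and in fact $w' \leq w(x_1)^2 - W(x_1) < 0$ is bounded away from $0$ (using monotonicity of $W$), forcing $w$ to become negative in finite time, contradicting $w > 0$. Therefore $w \geq \sqrt{W}$ everywhere on $(x_n^{V,+}, \infty)$.

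**Expected obstacle.** The routine analytic steps (Riccati equation, sign of $u$ and $u'$) are standard; the one point that needs a little care is the very first reduction — establishing that $u > 0$ and $u' < 0$ on the \emph{entire} interval $(x_n^{V,+}, \infty)$, not merely for large $x$. The cleanest route is to argue by contradiction using the convexity of $u$ in the forbidden region: on any subinterval where $u > 0$, $u'' = Wu > 0$ so $u$ is convex, and a convex positive function on a half-line that is not eventually decreasing grows at least linearly, contradicting $u \in L^2$; this pins down the sign of $u'$, and then a zero of $u$ in $(x_n^{V,+},\infty)$ would have to be simple with $u$ changing sign, again incompatible with convexity and the eventual positivity. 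Once this is in place, the Riccati comparison is immediate and requires only the monotonicity of $W = V - E_n^V$, which is guaranteed by membership in $\GPot$. No quantitative uniformity in $V$ is needed here — this is a pointwise statement for a fixed potential — so no appeal to the machinery of Section \ref{s:olver} is required; the decay estimate of Proposition \ref{eigenfunctions-prp} will then follow by combining this bound with the uniform bound already proved and the lower bound \eqref{eq:trans_eigen_lb}.
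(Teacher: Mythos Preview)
The paper's own proof of this theorem is just a one-line citation to \cite[Section 8.2]{titchmarsh}; no argument is given. Your Riccati/comparison argument is correct and self-contained, and is in fact the classical approach one finds in such references: once you establish $u>0$, $u'<0$ on the forbidden half-line (your convexity reasoning handles this cleanly), the logarithmic derivative $w=-u'/u$ satisfies $w'=w^2-W$, and the monotonicity of $W=V-E_n^V$ forces $w\geq\sqrt{W}$ by exactly the blow-down contradiction you describe. Since the paper merely outsources this step, you are not diverging from its approach so much as filling in what it omits; your write-up would serve well as an appendix-style expansion of the citation.
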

\begin{proof}
See \cite[Section 8.2]{titchmarsh}.
\end{proof}

By Remark \ref{rem:scaling} we may assume that $V \in \Pot^\pdeg(\co)$. Moreover, by Remark \ref{rem:inverting}, it is enough to prove the estimate for $x > 0$. From the uniform bound and the above estimate we deduce in particular that, if $x > x_n^{V,+}$,
\[
|\psi_n^V(x)|^2 \lesssim (E_n^V)^{1/2} (\sqrt{E_n^V} x_n^{V,+})^{-1/2} \exp\left( -\int_{x_n^{V,+}}^x \sqrt{V-E_n^V}\right).
\]
Since $V \in \Pot^\pdeg(\co)$, it is possible to find $\beta,\kappa \simeq 1$ so that $\kappa \geq 2$ and $V(t) - E_n^V \geq \beta^2 t^\pdeg$ for all $t \geq \kappa x_n^{V,+}$, whence, if $x\geq \kappa^2 x_n^{V,+}$,
\begin{multline*}
\int_{x_n^{V,+}}^x \sqrt{V-E_n^V} \geq \beta \int_{\kappa x_{n}^{V,+}}^x t^{\pdeg/2} \,dt 
= \beta(x^{\pdeg/2+1} - (\kappa x_n^{V,+})^{\pdeg/2+1}) \\
\geq \beta (1-(1/\kappa)^{\pdeg/2+1}) \, x^{\pdeg/2+1} \geq \beta (1-(1/\kappa)^{\pdeg/2+1}) \, \co^{-1/2} x V(x)^{1/2}.
\end{multline*}
Hence, if we set $\delta = \beta (1-(1/\kappa)^{\pdeg/2+1}) \co^{-1/2}$, we have $\delta\simeq1$ (since $\kappa\geq 2$) and, for all $x \geq \kappa^2 x_n^{V,+}$,
\[
|\psi_n^V(x)|^2 \lesssim (E_n^V)^{1/2} (\sqrt{E_n^V} x_n^{V,+})^{-1/2} \exp\left( -\delta x V(x)^{1/2} \right),
\]
and, since $\sqrt{E_n^V} x_n^{V,+} \gtrsim 1$ by \eqref{eq:trans_eigen_lb}, the conclusion follows by renaming $\kappa$.

\subsection{Eigenfunction estimates: bounds at the transition points}

By Remark \ref{rem:scaling}, it is enough to consider the case where $V = E_n^V U$ with $U \in \Pot^\pdeg(\co)$. As already discussed in \S\ref{ss:eeunifbd}, $\psi_n^V$ satisfies the ODE
\[
u'' = \alpha^2 (U-1) u
\]
with $\alpha =\sqrt{E_n^V}$, and moreover $|x_n^{V,\pm}| \simeq 1$, so the estimate to be proved reduces to
\begin{equation}\label{eq:ef_trans_bd_olver}
|\psi_n^V(x)|^2 \lesssim |x-x_n^{V,\pm}|^{-1/2}
\end{equation}
for all $\pm x > 0$.

By Proposition \ref{trans-prp} (applied to $\psi_n^V(\pm \cdot)$), there exists $\alpha_0 \simeq 1$ so that the required bound \eqref{eq:ef_trans_bd_olver} holds whenever $E_n^V \geq \alpha_0^2$.

Suppose now that $E_n^V \leq \alpha_0^2$. Note that, by \eqref{eq:trans_eigen_lb}, in this case $E_n^V \simeq 1$. Hence, when $\pm x \geq \kappa x_n^{V,\pm}$, the exponentially decaying bound gives us that
\[
|\psi_n^V(x)|^2 \lesssim \exp(-\delta |x| V(x)^{1/2}) \lesssim |x|^{-1/2} \leq |x-x_n^{V,\pm}|^{-1/2},
\]
while, if $\pm x \leq \kappa x_n^{V,\pm}$, then the uniform bound and $|x_n^{V,\pm}| \simeq 1$ imply that
\[
|\psi_n^V(x)|^2 \lesssim 1 \lesssim |x-x_n^{V,\pm}|^{-1/2}.
\]

\subsection{Proof of the virial-type estimate}
Note first that, since $\psi_n^V$ is an eigenfunction,
\[
\int_\RR V |\psi_n^V|^2  + \|(\psi_n^V)'\|_2^2 = \langle \opH_n^V \psi_n^V,\psi_n^V\rangle = E_n^V,
\]
hence the inequality $\int_\RR V |\psi_n^V|^2 \lesssim E_n^V$ is always true (with constant $1$) and it remains to prove the opposite estimate.

By Remark \ref{rem:scaling}, it is enough to consider the case where $V = E_n^V U$ for some $U \in \Pot^\pdeg(\co)$, so the inequality to be proved reduces to
\begin{equation}\label{eq:virial_olver_lb}
\int_\RR U |\psi_n^V|^2 \gtrsim 1.
\end{equation}
Once again, as in \S\ref{ss:eeunifbd}, the eigenfunction $\psi_n^V$ satisfies the ODE
\[
u'' = \alpha^2 (U-1) u
\]
with $\alpha = \sqrt{E_n^V}$, and moreover $|x_n^{V,\pm}| \simeq 1$. By inequality \eqref{eq:olver_int} of Proposition \ref{trans-prp} (applied to $\psi_n^V(\pm\cdot)$), there exists $\alpha_0 \simeq 1$ so that \eqref{eq:virial_olver_lb} holds whenever $E_n^V \geq \alpha_0^2$.

Suppose now that $E_n^V \leq \alpha_0^2$. Consider $U_{[t]} = t^{-\pdeg} U(t\cdot)$ and note that $U_{[t]} \in \Pot^\pdeg(\co)$ as well, for all $t\in\Rpos$. Hence
if $T_t : L^2(\RR) \to L^2(\RR)$ is defined as in Remark \ref{rem:scaling}, then
\[\begin{split}
1 \simeq E_1^{|\cdot|^\pdeg} \simeq E_1^{U_{[t]}} &\leq \int_\RR |(T_t \psi_n^V)'|^2 + \int_\RR U_{[t]} |T_t \psi_n^V|^2 \\
&=  t^2\int_\RR|(\psi_n^V)'|^2 + t^{-\pdeg}\int_\RR U |\psi_n^V|^2\\
&\leq t^2 E_n^V + t^{-\pdeg} \int_\RR U|\psi_n^V|^2\\
&\lesssim t^2 +t^{-\pdeg}\int_\RR U |\psi_n^V|^2
\end{split}\]
for all $t \in \Rpos$; in particular we can choose $t \simeq 1$ sufficiently small so that the term $t^2$ can be brought to the left-hand side, thus obtaining again the desired inequality \eqref{eq:virial_olver_lb}.

\subsection{Riesz transform bounds}
By Remark \ref{rem:scaling}, in order to prove Proposition \ref{apriori_scaled-prp}, it is enough to consider the case where $V \in \Pot^\pdeg(\co)$.

Let now $\mathcal{X}^V = \Span \{ \psi_n^V \tc n \in \Npos \}$ be the set of finite linear combinations of eigenfunctions of $\opH^V$. Clearly $\mathcal{X}^V$ is a core for each power of $\opH^V$. Moreover, in view of the exponential decay of eigenfunctions discussed above, $\mathcal{X}^V$ is made of rapidly decaying functions, hence it is contained in the domain of any power of $V$ (thought of as a multiplication operator on $L^2(\RR)$) and it is enough to prove the required estimates for all $f \in \mathcal{X}^V$. Therefore Proposition \ref{apriori_scaled-prp} follows from part \ref{en:apriori4} of the next result.

\begin{prp}\label{prp:apriori}
Let $V \in \Pot^\pdeg(\co)$ and $f \in \mathcal{X}^V$.
\begin{enumerate}[label=(\roman*)]
\item\label{en:apriori1} $\| f \|_2 \lesssim \| \opH^V f\|_2$;
\item\label{en:apriori2} $\| \partial_x f \|_2 \leq \|(\opH^V)^{1/2} f\|_2$ and $\|V^{1/2} f \|_2 \leq \|(\opH^V)^{1/2} f\|_2$;
\item\label{en:apriori3} $\| V f \|_2 \lesssim \| \opH^V f\|_2$, $\| \partial_x^2 f \|_2 \lesssim \| \opH^V f\|_2$, $\| V^{1/2} \partial_x f \|_2 \lesssim \| \opH^V f\|_2$.
\item\label{en:apriori4init} $\| V^2 f \|_2 \lesssim \| (\opH^V)^2 f\|_2$.
\item\label{en:apriori4} $\| V^k f \|_2 \lesssim_k \| (\opH^V)^k f\|_2$ for all $k \in \NN$.
\end{enumerate}
\end{prp}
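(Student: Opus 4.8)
The plan is to establish the five estimates in the stated order, using repeatedly two elementary facts: since $V\in\Pot^\pdeg(\co)$ gives $V(x)\geq\co^{-1}|x|^\pdeg$, the comparison \eqref{eq:comparison} yields $\opH^V\geq E_1^V\gtrsim1$, hence $\|(\opH^V)^j f\|_2\lesssim\|(\opH^V)^k f\|_2$ for $j\leq k$; and the one-dimensional Sobolev embedding gives $\|g\|_\infty^2\lesssim\|g\|_2\,\|\partial_x g\|_2$. Part \ref{en:apriori1} is then immediate from $\opH^V\gtrsim1$, and \ref{en:apriori2} from the form identity $\|(\opH^V)^{1/2}f\|_2^2=\langle\opH^V f,f\rangle=\|\partial_x f\|_2^2+\int_\RR V|f|^2$, both summands on the right being nonnegative. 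All integrations by parts below are legitimate because $f\in\mathcal{X}^V$ is smooth and rapidly decreasing and the relevant derivatives of $V$ are locally integrable.

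For \ref{en:apriori3} I would expand $\|\opH^V f\|_2^2=\|{-}\partial_x^2 f+Vf\|_2^2$ and integrate by parts twice in the cross term to obtain
\[
\|\opH^V f\|_2^2=\|\partial_x^2 f\|_2^2+\|Vf\|_2^2+2\int_\RR V|\partial_x f|^2-\int_\RR V''|f|^2 .
\]
Since all but the last term on the right are nonnegative, it then suffices to bound $\int_\RR|V''|\,|f|^2$ by $\|\opH^V f\|_2^2$: with $|V''(x)|\leq\co|x|^{\pdeg-2}$, on $\{|x|\geq1\}$ one has $|x|^{\pdeg-2}\lesssim V$, so that part is $\lesssim\int_\RR V|f|^2\leq\langle\opH^V f,f\rangle\lesssim\|\opH^V f\|_2^2$, while on $\{|x|\leq1\}$ the weight is integrable (as $\pdeg>1$), so that part is $\lesssim\|f\|_\infty^2\lesssim\|f\|_2\,\|\partial_x f\|_2\lesssim\|\opH^V f\|_2^2$. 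Moving the error to the right-hand side gives the three inequalities of \ref{en:apriori3} at once; they extend to all $f$ in the domain of $\opH^V$ by density.

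For \ref{en:apriori4init} I would argue in two overlapping ranges. If $\pdeg\leq2$: write $V^2 f=V\opH^V f+V\partial_x^2 f$, take squared norms, and integrate by parts as above, so that (with kinetic weight $V^3$ and error terms $\int V(V')^2|f|^2$, $\int V^2V''|f|^2$, both of magnitude $\lesssim\int|x|^{3\pdeg-2}|f|^2$)
\[
\|V\partial_x^2 f\|_2^2+\|V^2 f\|_2^2+2\int_\RR V^3|\partial_x f|^2\lesssim\|V\opH^V f\|_2^2+\int_\RR|x|^{3\pdeg-2}|f|^2 ;
\]
here $\|V\opH^V f\|_2\lesssim\|(\opH^V)^2 f\|_2$ by \ref{en:apriori3} applied to $\opH^V f$, and the error integral split at $|x|=1$ contributes $\lesssim\|f\|_\infty^2$ near $0$ and $\lesssim\int_\RR V^2|f|^2=\|Vf\|_2^2$ far away (since $|x|^{3\pdeg-2}\lesssim|x|^{2\pdeg}$ for $|x|\geq1$ when $\pdeg\leq2$), both $\lesssim\|(\opH^V)^2 f\|_2^2$. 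If $\pdeg>3/2$: then $V''f,\,V'\partial_x f,\,V\partial_x^2 f\in L^2$, so $Vf$ lies in the domain of $\opH^V$, and $\opH^V(Vf)=V\opH^V f-V''f-2V'\partial_x f$ gives $\|V^2 f\|_2=\|V(Vf)\|_2\lesssim\|\opH^V(Vf)\|_2\lesssim\|(\opH^V)^2 f\|_2+\|V''f\|_2+\|V'\partial_x f\|_2$, where $\|V''f\|_2$ is estimated as in \ref{en:apriori3} and $\|V'\partial_x f\|_2^2=\int_\RR(V')^2|\partial_x f|^2$ splits into a part $\lesssim\|\partial_x f\|_2^2$ and a part $\lesssim\int_\RR V|\partial_x f|^2=\|V^{1/2}\partial_x f\|_2^2\lesssim\|\opH^V f\|_2^2$ by \ref{en:apriori3}.

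Finally, \ref{en:apriori4} I would prove by induction on $k$ with \ref{en:apriori3} and \ref{en:apriori4init} as base cases, sketching the step for $k\geq3$ (the same argument also handles $k=2$ when $\pdeg>3/2$). Writing $V^k f=V^{k-1}\opH^V f+V^{k-1}\partial_x^2 f$, the first term is $\lesssim_k\|(\opH^V)^k f\|_2$ by the inductive hypothesis applied to $\opH^V f$. For the second, since $\pdeg>1$ and $k\geq3$ one has $\partial_x^2(V^{k-1}f)\in L^2$, so $V^{k-1}f$ lies in the domain of $\opH^V$; then $\opH^V(V^{k-1}f)=V^{k-1}\opH^V f-(V^{k-1})''f-2(V^{k-1})'\partial_x f$, and \ref{en:apriori3} applied to $V^{k-1}f$ together with the identity $V^{k-1}\partial_x^2 f=\partial_x^2(V^{k-1}f)-(V^{k-1})''f-2(V^{k-1})'\partial_x f$ reduces everything to bounding $\|(V^{k-1})''f\|_2$ and $\|(V^{k-1})'\partial_x f\|_2$ by $\|(\opH^V)^k f\|_2$. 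These I would handle via the derivative bounds of Definition \ref{pot2-dfn} (whence $|(V^{k-1})''|\lesssim_k|x|^{\pdeg(k-1)-2}$, $|(V^{k-1})'|\lesssim_k|x|^{\pdeg(k-1)-1}$), the split at $|x|=1$, the inductive hypothesis for powers $\leq k-1$ (distinguishing $\pdeg\leq2$ from $\pdeg>2$ so that the far-field parts collapse onto $\|V^{k-2}f\|_2$ or $\|V^{k-1}f\|_2$), one more integration by parts combined with the auxiliary bound $\|V^{k-2}\partial_x^2 f\|_2\leq\|V^{k-1}f\|_2+\|V^{k-2}\opH^V f\|_2\lesssim_k\|(\opH^V)^k f\|_2$, and the Sobolev embedding near $0$. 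The hard part, throughout, is precisely the uniform control of these commutator terms: one must check that near the origin the relevant weights remain integrable — which is exactly what fails for \ref{en:apriori4init} when $\pdeg\leq3/2$, forcing the separate quadratic-identity argument there — and that far from the origin every term either drops to a strictly lower power of $V$ (covered by the inductive hypothesis) or carries a factor $V^{1/2}\partial_x$ absorbed by \ref{en:apriori3}.
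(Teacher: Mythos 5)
Parts (i)--(iii) of your argument are sound; your version of (iii), with the double integration by parts performed uniformly in $\pdeg$ and the resulting term $\int_\RR |V''|\,|f|^2$ split at $|x|=1$ (integrability of $|x|^{\pdeg-2}$ near the origin plus $\|f\|_\infty^2\lesssim\|f\|_2\|\partial_x f\|_2$), is a legitimate streamlining of the paper's case distinction $\pdeg\leq 2$ versus $\pdeg>2$. The genuine gap is in part (iv): your two ``overlapping ranges'' do not cover all $\pdeg>1$. The first branch is restricted to $\pdeg\leq 2$ (and is correct there), while the second branch, although announced for all $\pdeg>3/2$, rests on the estimate $\|V'\partial_x f\|_2\lesssim\|\opH^V f\|_2$, derived from the pointwise bound $(V')^2\lesssim 1+V$. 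Since $(V')^2\simeq|x|^{2\pdeg-2}$ and $V\simeq|x|^{\pdeg}$, that pointwise bound fails at infinity as soon as $\pdeg>2$, and the operator inequality itself is false: for $V(x)=x^4$ one finds (by WKB, or from the natural scaling $x\sim(\opH^V)^{1/\pdeg}$, $\partial_x\sim(\opH^V)^{1/2}$) that $\|V'\psi_n'\|_2\simeq (E_n^V)^{5/4}$ while $\|\opH^V\psi_n^V\|_2=E_n^V$, so no uniform constant exists. Hence (iv) is unproved for $\pdeg>2$, and since (v) takes (iv) as a base case the gap propagates.

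The repair is within reach of your first branch and is essentially what the paper does: keep the quadratic identity
\[
\|V\opH^V f\|_2^2=\|V^2 f\|_2^2+\|V\partial_x^2 f\|_2^2+2\int_\RR V^3|\partial_x f|^2-\langle f,(V^3)''f\rangle ,
\]
but do not bound $|(V^3)''|$ pointwise by $|x|^{3\pdeg-2}$ (which exceeds $V^2$ at infinity exactly when $\pdeg>2$); instead redistribute one factor of $V$ onto the other copy of $f$, writing $\langle f,(V^3)''f\rangle=3\langle Vf,(2(V')^2+VV'')f\rangle$ and using $(V')^2+|VV''|\lesssim 1+V^2$, which holds for every $\pdeg>1$. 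This yields $|\langle f,(V^3)''f\rangle|\lesssim\|Vf\|_2(\|f\|_2+\|V^2f\|_2)\lesssim\|(\opH^V)^2f\|_2^2+\|(\opH^V)^2f\|_2\,\|V^2f\|_2$, and the stray factor $\|V^2f\|_2$ is then absorbed by the standard trick (divide through by $\|V^2f\|_2$ when it exceeds $\|(\opH^V)^2f\|_2$). The same caution is needed in your induction step for (v): the paper never estimates $\|(\partial_x(V^k))\,\partial_x f\|_2$ by a pointwise multiple of $V^{k-1/2}\partial_x f$, but uses $|\partial_x(V^k)|\lesssim\epsilon^{1-\pdeg k}+\epsilon V^k$ together with $V^k\partial_x f=\partial_x(V^k f)-(\partial_x(V^k))f$, so that the top-order contribution $\epsilon\|\opH^V(V^kf)\|_2$ can be reabsorbed into the left-hand side; your sketch names the right ingredients but this absorption must be made explicit.
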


The idea of the proof is to proceed inductively, starting from the elementary observation that $V \leq \opH^V$ in the sense of quadratic forms; the difficulty in passing to higher powers lies in the fact that $V$ and $\opH^V$ do not commute, hence one needs to control commutators, involving derivatives of $V$. The proof would be somehow simpler in the case $V(t) = |t|^\pdeg$ with $\pdeg \in \NN$, since iterated derivatives of $V$ would eventually vanish (this is used in \cite[Section 3]{gadzinski_semigroup_2000} in the case $\pdeg=1$); indeed, in the case $V(t) = |t|^\pdeg$ with $\pdeg \in 2\NN$, one could even deduce the result from known subelliptic estimates for a suitable homogeneous sub-Laplacian on a stratified Lie group, cf.\ \cite{nourrigat_inegalites_1987,shen_estimates_1995,robinson_grushin_2016}. Under our assumptions, however, we only have a finite order of differentiability of $V$, and derivatives of $V$ can diverge at $0$. For this reason some additional care is required in treating the first few steps of the induction scheme, which is reflected in the number of ``preliminary estimates'' listed in Proposition \ref{prp:apriori}. It should be noted that the proof does not require the estimate \eqref{eq:thirdderivative} of the third derivative of $V$ (or even its existence).

\begin{proof}
\ref{en:apriori1}. By comparison,
\[
E_1^{V} \gtrsim E_1^{|\cdot|^\pdeg} \gtrsim 1,
\]
hence
\[
\|\opH^V f\|_2 \geq E_1^V \|f\|_2 \gtrsim \|f\|_2.
\]

\ref{en:apriori2}. By the definition of $\opH^V$,	
\[
\|(\opH^V)^{1/2} f\|_2^2 = \langle \opH^V f,f \rangle = \langle -\partial_x^2 f,f\rangle + \langle V f,f \rangle = \|\partial_x f\|_2^2 + \|V^{1/2} f\|_2^2.
\]

\ref{en:apriori3}. Note that
\[
\|\opH^V f\|_2^2 = \langle \opH^V f, \opH^V f\rangle = \|\partial_x^2 f\|_2^2 + \|V f\|_2^2 - 2\Re \langle \partial_x^2 f, V f \rangle.
\]
Integration by parts gives that
\[
\langle \partial_x^2 f, V f \rangle = - \langle \partial_x f, (\partial_x V) f\rangle - \| V^{1/2} \partial_x f\|_2^2,
\]
hence
\begin{equation}\label{eq:norm_op_dec}
\|\opH^V f\|_2^2 -2\Re \langle \partial_x f, (\partial_x V) f\rangle = \|\partial_x^2 f\|_2^2 + \|V f\|_2^2 + 2\| V^{1/2} \partial_x f\|_2^2.
\end{equation}

If $1 < \pdeg \leq 2$, then $0 < \pdeg-1\leq \pdeg/2$, hence
\[
|\partial_x V| \lesssim 1 + V^{1/2}
\]
and
\[\begin{split}
|\langle \partial_x f, (\partial_x V) f\rangle| &\lesssim \| (\partial_x V) f \|_2^2 + \|\partial_x f\|_2^2 
\lesssim \|f\|_2^2 + \| V^{1/2} f \|_2^2+ \|\partial_x f\|_2^2 \\
&\lesssim \|f\|_2^2 + \|(\opH^V)^{1/2} f\|_2^2 \lesssim \|\opH^V f\|_2^2
\end{split}\]
by parts \ref{en:apriori1} and \ref{en:apriori2}. This, combined with \eqref{eq:norm_op_dec}, gives part \ref{en:apriori3}.

Suppose instead that $\pdeg > 2$. Then a further integration by parts gives that
\[
\langle \partial_x f, (\partial_x V) f\rangle = -\langle f, (\partial_x^2 V) f\rangle - \langle (\partial_x V)  f,  \partial_x f\rangle,
\]
whence
\[
2\Re \langle \partial_x f, (\partial_x V) f\rangle = -\langle f, (\partial_x^2 V) f\rangle
\]
and from \eqref{eq:norm_op_dec} we obtain that
\[
\|\opH^V f\|_2^2 + \langle f, (\partial_x^2 V) f\rangle = \|\partial_x^2 f\|_2^2 + \|V f\|_2^2 + 2\| V^{1/2} \partial_x f\|_2^2.
\]
On the other hand, $\pdeg-2 > 0$, because $\pdeg > 2$; so, by the assumptions on $V$,
\[
|\partial_x^2 V| \lesssim 1 + V,
\]
hence
\[
|\langle f, (\partial_x^2 V) f\rangle| \lesssim \|f\|_2^2 + \|V^{1/2} f\|_2^2 \leq \|f\|_2^2 + \|(\opH^V)^{1/2} f\|_2^2 \lesssim \|\opH^V f\|_2^2
\]
(here we have used parts \ref{en:apriori1} and \ref{en:apriori2}) and
\[
\|\partial_x^2 f\|_2^2 + \|V f\|_2^2 + 2\| V^{1/2} \partial_x f\|_2^2 \lesssim \|\opH^V f\|_2^2.
\]

\ref{en:apriori4init}. Note that
\[
(\opH^V)^2 f = -\partial_x^2 \opH^V f - V \partial_x^2 f + V^2 f,
\]
hence
\[
\|(\opH^V)^2 f\|_2^2 = \|\partial_x^2 \opH^V f + V \partial_x^2 f\|_2^2 + \|V^2 f\|_2^2 - 2\Re\langle \partial_x^2 \opH^V f, V^2 f\rangle - 2 \Re \langle V \partial_x^2 f , V^2 f\rangle.
\]
Now
\[
\langle V \partial_x^2 f, V^2 f\rangle 
= - \langle \partial_x f, \partial_x (V^3 f)\rangle 
= - \langle \partial_x f, (\partial_x (V^3)) f\rangle - \| V^{3/2} \partial_x f \|_2^2
\]
and moreover
\[
\langle \partial_x f, (\partial_x (V^3)) f\rangle = - \langle f, (\partial^2_x (V^3)) f\rangle - \langle f, (\partial_x (V^3)) \partial_x f\rangle,
\]
whence
\[
2 \Re \langle \partial_x f, (\partial_x (V^3)) f\rangle = - \langle f, (\partial^2_x (V^3)) f\rangle 
\]
and
\[
2 \Re \langle V \partial_x^2 f, V^2 f\rangle = \langle f, (\partial^2_x (V^3)) f\rangle - 2\| V^{3/2} \partial_x f\|_2^2.
\]
Therefore
\begin{multline*}
\|(\opH^V)^2 f\|_2^2 + 2\Re\langle \partial_x^2 \opH^V f, V^2 f\rangle + \langle f, (\partial_x^2 (V^3)) f\rangle \\
 = \|\partial_x^2 \opH^V f + V \partial_x^2 f\|_2^2 + \|V^2 f\|_2^2 + 2 \| V^{3/2} \partial_x f \|_2^2,
\end{multline*}
whence
\[
\|V^2 f\|_2^2 \leq \|(\opH^V)^2 f\|_2^2 + 2\Re\langle \partial_x^2 \opH^V f, V^2 f\rangle + \langle f, (\partial_x^2 (V^3)) f\rangle.
\]

Note now that
\[
\langle f, (\partial_x^2 (V^3)) f\rangle = 3 \langle V f, (2(\partial_x V)^2 + V (\partial_x^2 V) ) f \rangle, 
\]
and moreover, since $\pdeg > 1$,
\[
|\partial_x V|^2 + |V (\partial_x^2 V)| \lesssim 1+V^2.
\]
Hence
\[\begin{split}
|\langle f, (\partial_x^2 (V^3)) f\rangle| &\lesssim \|V f\|_2 (\|f\|_2 + \|V^2 f\|_2) \\
&\lesssim \|\opH^V f\|_2 (\| f\|_2 + \|V^2 f\|_2) \\
&\lesssim \|(\opH^V)^2 f\|_2^2 + \|(\opH^V)^2 f\|_2 \|V^2 f\|_2
\end{split}\]
by parts \ref{en:apriori1} and \ref{en:apriori3}. Moreover
\[
|\langle \partial_x^2 \opH^V f, V^2 f\rangle| \leq \|\partial_x^2 \opH^V f\|_2 \|V^2 f\|_2 \lesssim \|(\opH^V)^2 f\|_2 \|V^2 f\|_2
\]
by part \ref{en:apriori3}. Therefore
\[
\|V^2 f\|_2^2 \lesssim \|(\opH^V)^2 f\|_2^2 + \|(\opH^V)^2 f\|_2 \|V^2 f\|_2,
\]
from which the conclusion follows (if $\|V^2 f\|_2 \leq \|(\opH^V)^2 f\|_2$ then we are done, otherwise divide both sides of the previous inequality by $\|V^2 f\|_2$).

\ref{en:apriori4}. We prove the inequality
\[
\|V^k f\|_2 \lesssim_k \|(\opH^V)^k f\|_2
\]
by induction on $k \in \NN$. Note that the case $k=0$ is trivial and the cases $k=1,2$ have been treated in parts \ref{en:apriori3} and \ref{en:apriori4init}. Assume now that the inequality has been proved up to a certain $k \geq 2$ and let us prove it 
for $k+1$.

We first prove the auxiliary inequality
\begin{equation}\label{eq:induction_thesis2}
\|\opH^V (V^k f)\|_2 \lesssim_k \|(\opH^V)^{k+1} f\|_2.
\end{equation}
Note that
\[
\opH^V (V^k f) = V^k \opH^V f - 2 (\partial_x(V^k)) (\partial_x f) - (\partial_x^2(V^k)) f
\]
and, by the induction hypothesis,
\[
\|V^k \opH^V f\|_2 \lesssim_k \|(\opH^V)^{k+1} f\|.
\]
By the assumptions on $V$ (note that $k\geq 2$ and $\pdeg > 1$),
\[
|\partial_x^2 (V^k)| \lesssim_k 1+ V^k,
\]
so
\[
\|(\partial_x^2(V^k)) f\|_2 \lesssim_k \|f\|_2 + \|V^{k} f\| \lesssim_k \|(\opH^V)^{k+1} f\|_2,
\]
by part \ref{en:apriori1} and the induction hypothesis. Finally, again by the assumptions on $V$,
\[
|\partial_x (V^k)| \lesssim_k |x|^{\pdeg k-1}, \qquad V^k \gtrsim_k |x|^{\pdeg k},
\]
so (note that $\pdeg k - 1 > 0$ since $\pdeg > 1$  and $k \geq 1$),
\[
|\partial_x (V^k)| \lesssim_k \epsilon^{1-\pdeg k} + \epsilon V^k
\]
for all $\epsilon \in \Rpos$ (where the implied constants do not depend on $\epsilon$) and
\[\begin{split}
\|(\partial_x(V^k)) (\partial_x f)\|_2 &\lesssim_k \epsilon^{1-\pdeg k} \|\partial_x f\|_2 + \epsilon \|V^k \partial_x f\|_2 \\
&\leq \epsilon^{1-\pdeg k} \|(\opH^V)^{1/2} f\|_2 + \epsilon \|V^k \partial_x f\|_2\\
&\lesssim_k \epsilon^{1-\pdeg k} \|(\opH^V)^{k+1} f\|_2 + \epsilon \|V^k \partial_x f\|_2,
\end{split}\]
where parts \ref{en:apriori1} and \ref{en:apriori2} were used. Furthermore
\[
V^k \partial_x f = \partial_x(V^k f) - (\partial_x(V^k)) f
\]
and
\[\begin{split}
\|V^k \partial_x f\|_2 &\leq \|\partial_x(V^k f)\| + \|(\partial_x(V^k)) f\|_2 \\
&\leq\|(\opH^V)^{1/2} V^k f\|_2 + \|f\|_2 + \|V^k f\|_2 \\
&\lesssim_k \|(\opH^V) V^k f\|_2 + \|(\opH^V)^{k+1} f\|_2
\end{split}\]
where the induction hypothesis and parts \ref{en:apriori1} and \ref{en:apriori2} were used. Putting all together gives
\[
\|\opH^V (V^k f) \|_2 \lesssim_k (\epsilon+\epsilon^{1-\pdeg k}) \|(\opH^V)^{k+1} f\|_2 + \epsilon \|\opH^V (V^k f)\|_2;
\]
if $\epsilon \in \Rpos$ is now chosen sufficiently small, then the last term can be moved to the left-hand side and we obtain
\eqref{eq:induction_thesis2}.

Now, by part \ref{en:apriori3} and \eqref{eq:induction_thesis2},
\[
\|V^{k+1} f\|_2 \lesssim \|\opH^V (V^k f) \|_2 \lesssim_k \| (\opH^V)^{k+1} f\|_2
\]
and we are done.
\end{proof}

\section{One-parameter families of Schr\"odinger operators}\label{s:rescaledschroedinger}

Again, let $\pdeg,\co \in \Rpos$ be fixed, with $\pdeg > 1$.
For a given $V \in \Pot^\pdeg(\co)$, consider the family of operators $\opH^V_\scale \defeq \opH^{\scale V}$ associated to the potentials $\scale V$ as $\scale \in \Rpos$. Correspondingly we define
\[
E_n^V(\scale) \defeq E_n^{\scale V}, \qquad \psi_{n,\scale}^{V} \defeq \psi_n^{\scale V}, \qquad x_{n,\scale}^{V,\pm} \defeq x_n^{\scale V,\pm}.
\]

We collect in the next proposition a few important estimates on the objects above. The rest of the section is devoted to its proof.

\begin{prp}\label{eigenvalues-prp}
Let  $V \in \Pot^{\pdeg}(\co)$.
\begin{enumerate}[label=(\roman*)]
\item\label{en:eigenvalues1} For all $n \in \Npos$  and $\scale \in \Rpos$,
\[
E_n^V(\scale) \simeq \scale^{2/(2+\pdeg)} \, n^{2\pdeg/(2+\pdeg)}, \qquad\pm x_{n,\scale}^{V,\pm} \simeq \left(\frac{n}{\sqrt{\scale}}\right)^{2/(2+\pdeg)}.
\]
\item\label{en:eigenvalues-andep} $E_n^V(\scale)$ and $\psi_{n,\scale}^{V}$ depend analytically on $\scale \in \Rpos$.
\item\label{en:eigenvalues2} $E_n^V : \Rpos \to \Rpos$ is increasing, invertible and differentiable. Moreover,
\[
\partial_\scale E_n^V(\scale) \simeq \scale^{-1} E_n^V(\scale).
\]
\end{enumerate}
\end{prp}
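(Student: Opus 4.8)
The plan is to derive all three parts from the scaling identities of Remark~\ref{rem:scaling}, the comparison principle~\eqref{eq:comparison}, and the single-potential estimates of Section~\ref{s:schroedinger}. For part~\ref{en:eigenvalues1} I would first absorb the parameter $\scale$ by scaling: applying Remark~\ref{rem:scaling} to $\scale V \in \MPot^\pdeg(\co)$ with $t = \scale^{-1/(2+\pdeg)}$ produces $W_\scale \defeq \scale^{\pdeg/(2+\pdeg)} V(\scale^{-1/(2+\pdeg)}\,\cdot\,) \in \Pot^\pdeg(\co)$ together with the identities $E_n^V(\scale) = \scale^{2/(2+\pdeg)} E_n^{W_\scale}$ and $x_{n,\scale}^{V,\pm} = \scale^{-1/(2+\pdeg)} x_n^{W_\scale,\pm}$, so it suffices to show $E_n^W \simeq n^{2\pdeg/(2+\pdeg)}$ uniformly for $W \in \Pot^\pdeg(\co)$. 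By~\eqref{eq:comparison} and the two-sided bound in Definition~\ref{pot2-dfn} this is equivalent to the classical asymptotics $E_n^{|\cdot|^\pdeg} \simeq n^{2\pdeg/(2+\pdeg)}$; alternatively it can be read off from Theorem~\ref{titchmarsh-thm}, combining the Bohr--Sommerfeld count with the elementary estimate $\int_{x_n^{W,-}}^{x_n^{W,+}} \sqrt{E_n^W - W}\,dx \simeq (E_n^W)^{1/2+1/\pdeg}$ (which uses only $\co^{-1}|x|^\pdeg \le W \le \co|x|^\pdeg$) and solving for $E_n^W$. The estimate for the transition points then follows from $W(x_n^{W,\pm}) = E_n^W$ and the same two-sided bound.

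For part~\ref{en:eigenvalues-andep} I would appeal to analytic perturbation theory. The quadratic forms $\mathfrak{q}_\scale[u] = \int_\RR |u'|^2 + \scale\int_\RR V\,|u|^2$ share the form domain $\{u \in H^1(\RR) \tc \int_\RR V|u|^2 < \infty\}$, depend affinely (hence holomorphically) on $\scale$, and are sectorial when $\Re\scale > 0$, so $\scale \mapsto \opH^V_\scale$ is a holomorphic family of type~(B) in the sense of Kato on the right half-plane. Since every $\opH^V_\scale$ with $\scale \in \Rpos$ has discrete simple spectrum (Section~\ref{s:schroedinger}), the $n$-th eigenvalue and its one-dimensional spectral projection are holomorphic in $\scale$ in a neighbourhood of $\Rpos$; simplicity along $\Rpos$ prevents branching, so $\scale \mapsto E_n^V(\scale)$ is analytic on all of $\Rpos$, and normalising an analytic section of the eigenprojection in $L^2$ — with the sign convention of Section~\ref{s:schroedinger}, which is locally constant in $\scale$ — yields an analytic choice of $\psi_{n,\scale}^V$.

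For part~\ref{en:eigenvalues2} I would start from the Feynman--Hellmann formula, which for this analytic family reads
\[
\partial_\scale E_n^V(\scale) = \langle V\,\psi_{n,\scale}^V,\psi_{n,\scale}^V\rangle = \int_\RR V\,|\psi_{n,\scale}^V|^2 .
\]
Multiplying by $\scale$ and invoking the virial-type bound (Proposition~\ref{virial-prp}) for the potential $\scale V$ gives $\scale\,\partial_\scale E_n^V(\scale) = \int_\RR (\scale V)\,|\psi_n^{\scale V}|^2 \simeq E_n^{\scale V} = E_n^V(\scale)$, which is the claimed relation $\partial_\scale E_n^V(\scale) \simeq \scale^{-1} E_n^V(\scale)$. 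In particular the derivative is strictly positive, so $E_n^V$ is strictly increasing; it is differentiable by part~\ref{en:eigenvalues-andep}; and, since part~\ref{en:eigenvalues1} gives $E_n^V(\scale) \to 0$ as $\scale \to 0^+$ and $E_n^V(\scale) \to +\infty$ as $\scale \to +\infty$, it is a continuous strictly increasing bijection of $\Rpos$ onto itself, hence invertible. The one substantial point in the whole argument is the uniform two-sided eigenvalue count $E_n^{|\cdot|^\pdeg} \simeq n^{2\pdeg/(2+\pdeg)}$ underlying part~\ref{en:eigenvalues1}; everything else is bookkeeping with scaling, the comparison principle, Kato's analytic perturbation theory, and the virial bound already in hand.
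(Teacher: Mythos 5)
Your proposal is correct and follows essentially the same route as the paper: scaling with $t=\scale^{-1/(2+\pdeg)}$ to reduce to $\Pot^\pdeg(\co)$, comparison with $|\cdot|^\pdeg$ together with the Bohr--Sommerfeld count of Theorem \ref{titchmarsh-thm} for part (i), Kato's analytic perturbation theory for part (ii), and the Feynman--Hellmann identity combined with the virial bound of Proposition \ref{virial-prp} for part (iii). One caution on your ``alternative'' in part (i): you cannot apply Theorem \ref{titchmarsh-thm} directly to a general $W\in\Pot^\pdeg(\co)$, since such a $W$ need not be convex --- the paper applies the theorem only to $|\cdot|^\pdeg$ (convex because $\pdeg>1$) and transfers the asymptotics to $W$ via the comparison principle \eqref{eq:comparison}, which is exactly your primary route.
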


\subsection{An eigenvalue counting formula for convex potentials and the proof of part \ref{en:eigenvalues1} of Proposition \ref{eigenvalues-prp}}

The proof of Proposition \ref{eigenvalues-prp} and, more crucially, that of Lemma \ref{lem:transition_est} below, relies on the following version of the Bohr--Sommerfeld formula.

\begin{thm}\label{titchmarsh-thm}
Let $V\in C(\RR)\cap C^2(\RR\setminus\{0\})$ be such that $V(0)=0$, $V'(x)>0$ for every $x>0$, $V'(x)<0$ for every $x<0$, and $V''(x)\geq0$ for every $x$. Then

\[
n = \frac{1}{\pi}\int_{x_n^{V,-}}^{x_n^{V,+}} \sqrt{E_n^V-V} + \BigO(1), 
\]
where the error term is bounded by an absolute constant (which may be taken to be $8+\frac{5}{2\pi^2}$).
\end{thm}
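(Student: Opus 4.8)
The plan is to convert Theorem~\ref{titchmarsh-thm} into a statement about the \emph{number of zeros} of the $n$th eigenfunction, and then to estimate that number by a WKB/Pr\"ufer argument away from the two turning points together with an Airy-type comparison near them; crucially, every constant that arises is numerical, which is what makes the error absolute. First I would record the Sturm--Liouville facts: under the stated hypotheses $\opH^V$ has purely discrete simple spectrum $E_1^V<E_2^V<\cdots$ and the $n$th eigenfunction $\psi_n^V$ has exactly $n-1$ zeros; moreover every zero lies in $[x_n^{V,-},x_n^{V,+}]$, since beyond the turning points $V>E_n^V$, so $(\psi_n^V)''=(V-E_n^V)\psi_n^V$ makes $\psi_n^V$ convex where positive and concave where negative, and this, together with $\psi_n^V\to0$ at infinity (forced by $\psi_n^V\in L^2$), is incompatible with a zero beyond the turning points. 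Writing $a=x_n^{V,-}$, $b=x_n^{V,+}$ and $q=E_n^V-V$, the assertion becomes
\[
\#\{\,x\in[a,b]\tc \psi_n^V(x)=0\,\}=\frac1\pi\int_a^b\sqrt q+\BigO(1)
\]
with an absolute error (recall $n=1+\#\{\text{zeros}\}$). Note that $q$ is concave ($q''=-V''\le0$), strictly positive on $(a,b)$, vanishes at the endpoints, and satisfies $q'(a)=-V'(a)>0$, $q'(b)=-V'(b)<0$: both turning points are \emph{simple}.

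For the bulk estimate, choose $a_*,b_*\in(a,b)$ with $q(a_*)=q(b_*)=\tfrac12\max_{[a,b]}q$, and on $[a_*,b_*]$ use the modified (Liouville--Green) Pr\"ufer substitution $\psi_n^V=\rho\,q^{-1/4}\sin\theta$, $(\psi_n^V)'=\rho\,q^{1/4}\cos\theta$, for which $|\theta'-\sqrt q|\le|q'|/(4q)$. The zeros of $\psi_n^V$ in $[a_*,b_*]$ are the points with $\theta\in\pi\ZZ$, so their number differs from $\tfrac1\pi$ times the variation of $\theta$ by at most $1$, and hence
\[
\left|\,\#\{\text{zeros in }[a_*,b_*]\}-\frac1\pi\int_{a_*}^{b_*}\sqrt q\,\right|\le\frac1{4\pi}\int_{a_*}^{b_*}\frac{|q'|}{q}+1.
\]
Since $q$ is monotone on $(a_*,c)$ and on $(c,b_*)$, where $c$ is the point of maximum, $\int_{a_*}^{b_*}|q'|/q=\log\frac{\max q}{q(a_*)}+\log\frac{\max q}{q(b_*)}=2\log2$ by the choice of $a_*,b_*$, so the right-hand side is an absolute constant.

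The crux is the two turning-point layers $(a,a_*)$ and $(b_*,b)$, where $q\to0$ and $|q'|/q$ is no longer integrable, so the Pr\"ufer estimate breaks down. Near the simple turning point $a$ one expects $\psi_n^V$ to behave, up to a slowly varying amplitude, like $\Ai\big(-(\tfrac32\int_a^x\sqrt q)^{2/3}\big)$; the task is to establish this \emph{with a uniformly controlled error} using only the convexity of $q$ — this is exactly where Titchmarsh's argument is needed in place of Olver's Theorem~\ref{olver-thm}, since we lack the $C^3$ and Schwarzian-integrability hypotheses of Section~\ref{s:olver}; the concavity of $q$ and the simplicity of the turning point supply clean two-sided linear bounds for $q$ near $a$, and the scale-invariance of the Airy equation keeps all constants free of $V$, $E_n^V$ and $n$. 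Granting such a description, the zeros of $\psi_n^V$ in $(a,a_*)$ are, to within $\BigO(1)$, the points where $\tfrac32\int_a^x\sqrt q=|a_k|^{3/2}$ with $a_k$ the $k$th negative zero of $\Ai$; since $\tfrac23|a_k|^{3/2}=(k-\tfrac14)\pi+o(1)$, this gives $\#\{\text{zeros in }(a,a_*)\}=\tfrac1\pi\int_a^{a_*}\sqrt q+\BigO(1)$, and symmetrically near $b$. Adding the bulk and the two layers yields $\#\{\text{zeros in }[a,b]\}=\tfrac1\pi\int_a^b\sqrt q+\BigO(1)$, hence $n=\tfrac1\pi\int_a^b\sqrt q+\BigO(1)$; keeping track of the numerical contributions — the $2\log2$ from the bulk, the two Airy-comparison errors (which carry the $\tfrac5{16}$ from the Schwarzian of the Airy equation, ultimately producing the $\tfrac5{2\pi^2}$), the matching terms at $a_*,b_*$, the rounding of $\theta/\pi$, and the phase of the recessive solution at $\pm\infty$ — gives the explicit bound $8+\tfrac5{2\pi^2}$.

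The main obstacle is thus the turning-point analysis: turning the classical Airy/WKB connection formulae into honest two-sided estimates with \emph{absolute} constants, under the weak hypothesis that $V$ is merely convex and $C^2$ off the origin. As noted, the simplicity of the turning points together with the concavity of $q$ make this possible, and the scaling invariance of the Airy equation removes the dependence on the (non-uniform) size of $V'$ at $a$ and $b$. Since all of this is carried out in detail in \cite{titchmarsh}, in the Appendix I would limit myself to explaining how those estimates assemble into the uniform statement above.
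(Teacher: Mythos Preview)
Your reduction to zero-counting and the confinement of zeros to the classical region are correct, and the bulk Pr\"ufer estimate on $[a_*,b_*]$ with $\int|q'|/q=2\log2$ is clean. But the turning-point layers are where your proposal and the paper part ways, and your layer argument has a genuine gap.

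The paper (following Titchmarsh) does \emph{not} use an Airy comparison near the turning points. It runs the Pr\"ufer angle on essentially the whole half-interval $(0,y_n^{V,+})$, where the cutoff $y_n^{V,+}\in(0,x_n^{V,+})$ is defined not by the level of $q$ but by the condition $V'(y)/(E_n^V-V(y))^{3/2}=1/\pi$. This choice makes $(1/Q)'=\tfrac{V'}{2(E_n^V-V)^{3/2}}$ monotone and bounded by $1/(2\pi)$ on $(0,y_n^{V,+})$; a Sturm comparison using \eqref{titchmarsh-diff} then shows there is at most one zero in $[y_n^{V,+},x_n^{V,+})$. The Pr\"ufer error $\int_0^{y_n^{V,+}}\sin(2\theta)Q'/(2Q)$ is handled by rewriting it (via $\theta'=Q+\sin(2\theta)Q'/(2Q)$) as
\[
\int_0^{y_n^{V,+}}\frac{\sin(2\theta)Q'}{2Q^2}\,\theta'\;-\;\int_0^{y_n^{V,+}}\frac{\sin^2(2\theta)|Q'|^2}{4Q^3}.
\]
The first integral is $\tfrac14\int(\cos2\theta)'(1/Q)'$ and is bounded by $1/(4\pi)$ via the second mean value theorem, using the monotonicity and the $1/(2\pi)$ bound on $(1/Q)'$. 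The second is bounded by integrating $\tfrac{1}{16}\int V'((E_n^V-V)^{-3/2})'$ by parts; the convexity $V''\ge0$ kills the boundaryless term and leaves at most $1/\pi$. This is the true origin of the $5/(2\pi^2)$: it is $2\times\tfrac1\pi(\tfrac1{4\pi}+\tfrac1\pi)$, not a Schwarzian contribution from an Airy model.

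Your Airy-comparison plan for the layers, by contrast, is not what \cite{titchmarsh} does, so the deferral is misplaced; and under only $C^2$ convexity it does not obviously go through with absolute constants. Concavity of $q$ gives $c_1(b-x)\le q(x)\le c_2(b-x)$ on $(b_*,b)$ with $c_1=q(b_*)/(b-b_*)$ and $c_2=|q'(b)|$, but the ratio $c_2/c_1=|q'(b)|/|q'(\xi)|$ (some $\xi\in(b_*,b)$) is not uniformly bounded, so the upper and lower Sturm--Airy zero counts $\tfrac{2}{3\pi}c_i^{1/2}(b-b_*)^{3/2}+\BigO(1)$ need not agree to within an absolute $\BigO(1)$. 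The Olver-type global Airy approximation that would fix this requires exactly the $C^3$/Schwarzian hypotheses you note are unavailable. The paper's Pr\"ufer-with-clever-cutoff argument sidesteps all of this.
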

Notice that a potential satisfying the assumptions of Theorem \ref{titchmarsh-thm} 
is in $\GPot$,
so the quantities $x_n^{V,+}$, $x_n^{V,-}$, and $E_n^V$ are well-defined (and $E_n^V-V$ is positive on the interval of integration).

The proof of Theorem \ref{titchmarsh-thm} is essentially contained in Section $7.5$ of \cite{titchmarsh}, where the result is attributed to \cite{hartman_1952}. Since in neither reference there is an explicit discussion of the absolute nature of the error term, and the argument relies on several facts scattered through different sections of \cite{titchmarsh}, we devoted the Appendix to a discussion of that proof.

We now proceed to prove part \ref{en:eigenvalues1} of Proposition \ref{eigenvalues-prp}.
According to Remark \ref{rem:scaling}, if we choose $t = \scale^{-1/(\pdeg+2)}$, then the scaling of parameter $t$ maps $\scale V$ into a potential $W \in \Pot^\pdeg(\co)$, and 
\begin{equation}\label{eq:eigen-scale}
E_n^{V}(\scale) =E_n^{\scale V} = \scale^{2/(\pdeg+2)} E_n^{W}.
\end{equation}
On the other hand, since $W \in \Pot^{\pdeg}(\co)$, by comparison \eqref{eq:comparison},
\begin{equation}\label{eigen-comparison}
\co^{-1}E_n^{|\cdot|^\pdeg}\leq E^W_n\leq \co E_n^{|\cdot|^\pdeg}.
\end{equation}
Applying Theorem \ref{titchmarsh-thm} to the potential $|\cdot|^\pdeg$ (which is convex for $\pdeg>1$), we get
\[\begin{split}
n
&=  \frac{1}{\pi}\int_{-(E_n^{|\cdot|^\pdeg})^{1/\pdeg}}^{(E_n^{|\cdot|^\pdeg})^{1/\pdeg}} \sqrt{E_n^{|\cdot|^\pdeg}-x^{\pdeg}} \,dx + \BigO(1)\\
&=  (E_n^{|\cdot|^\pdeg})^{1/2+1/\pdeg}\frac{1}{\pi}\int_{-1}^1\sqrt{1-y^{\pdeg}} \,dy + \BigO(1).
\end{split}\]
If $n$ is larger than some universal constant, then this implies
\[
E_n^{|\cdot|^\pdeg}\simeq n^{2\pdeg/(2+\pdeg)},
\]
and this approximate identity trivially extends to all values of $n$, since $E_n^{|\cdot|^\pdeg} \geq E_1^{|\cdot|^\pdeg} > 0$ (just by our definition of the approximate equality sign). By \eqref{eq:eigen-scale} and \eqref{eigen-comparison}, we conclude that
\[
E^V_n(\scale) \simeq\scale^{2/(2+\pdeg)}n^{2\pdeg/(2+\pdeg)} \qquad\forall n \geq 1,\quad \forall \scale\in\Rpos.
\]
Since $\scale V(x_{n,\scale}^{V,\pm}) = E_n^V(\scale)$ and $V \in \Pot^\pdeg(\co)$, the approximate formula for the transition points follows immediately.

\subsection{Proof of parts \ref{en:eigenvalues-andep} and \ref{en:eigenvalues2} of Proposition \ref{eigenvalues-prp}}
By classical results of perturbation theory (see Chapter Seven of \cite{kato}, in particular Section $8$), the eigenvalues $E_n^V(\scale)$ and normalized eigenfunctions $\psi_{n,\scale}^V$ of $\opH_\scale^V$ depend analytically on the parameter $\scale$. In the case of eigenfunctions, this means that $\scale\mapsto\psi_{n,\scale}^V$ is analytic as an $L^2(\RR)$-valued mapping. In particular $\partial_\scale\psi_{n,\scale}^V$ and $\psi_{n,\scale}^V$ are orthogonal and, by differentiating both sides of
\[
E_n^V(\scale)=\langle (-\partial_x^2+\scale V)\psi_{n,\scale}^V ,\psi_{n,\scale}^V \rangle
\]
(where $\langle\cdot,\cdot\rangle$ is the scalar product in $L^2(\RR)$), we get:
\[
\partial_\scale(E_n^V)(\scale) = \int_\RR V|\psi_{n,\scale}^V|^2 + \langle \opH_\scale^V\partial_\scale\psi_{n,\scale}^V,\psi_{n,\scale}^V \rangle + \langle \opH_\scale^V\psi_{n,\scale}^V,\partial_\scale\psi_{n,\scale}^V \rangle = \int_\RR V|\psi_{n,\scale}^V|^2.
\]
This useful (and well-known) identity implies that $E_n^V(\scale)$ is increasing and that the desired estimate follows by Proposition \ref{virial-prp} applied to the potential $\scale V$.

\section{Analysis of Grushin operators and proof of Theorem \ref{thm:main}}\label{s:grushin}

Now that all the ingredients are in place, we proceed to the analysis of Grushin operators on $\RR^\done \times \RR^\dtwo$,
which will eventually lead to a proof of our multiplier theorem.
In what follows, with the symbols $\simeq$ and $\lesssim$ we denote inequalities with implicit constants depending only on the parameters $\hpdeg$, $\co$, $\done$, $\dtwo$ appearing in the statement of Theorem \ref{thm:main}.

\subsection{Preliminaries}

Let $\hpdeg \in (1/2,\infty)$ and $\co \in \Rpos$. Let $V_1,\dots,V_\done \in \Pot_{\even\convex}^{2\hpdeg}(\co)$. Define $V : \RR^\done \to \RR$ by $V(x) = \sum_{\jone=1}^\done V_j(x_j)$ and let the Grushin differential operator $\opL$ on $\RR^{\done + \dtwo}_z = \RR^{\done}_x \times \RR^{\dtwo}_y$ be defined by
\[
\opL = -\Delta_x - V(x) \Delta_y,
\]
where $\Delta_x = \sum_{\jone=1}^\done \partial_{x_\jone}^2$ and $\Delta_y = \sum_{\jtwo=1}^\dtwo \partial_{y_\jtwo}^2$.

This kind of operators is studied in \cite{robinson_analysis_2008} under much weaker assumptions on the function $V$. In particular, in \cite[Section 2]{robinson_analysis_2008} details are given on the definition of a self-adjoint extension of $\opL$ by means of the associated Dirichlet form. In addition, a number of properties of $\opL$ are obtained, in connection with the associated degenerate Riemannian geometry, which we summarise in the following statement.

\begin{prp}[\cite{robinson_analysis_2008}]\label{prp:doubling_gaussian}
There exists a distance $\dist$ on $\RR^2$ such that the following hold.
\begin{enumerate}[label=(\roman*)]
\item For all $z =(x,y)$ and $z'=(x',y') \in \RR^2$,
\[
\dist(z,z') \simeq |x-x'| + \min\left\{|y-y'|^{1/(1+\hpdeg)}, \frac{|y-y'|}{(|x|+|x'|)^{\hpdeg}}\right\}.
\]
\item If $\Vol(z,r)$ denotes the (Lebesgue) measure of the $\dist$-ball of centre $z=(x,y)$ and radius $r$, then
\[
\Vol(z,r) \simeq r^{\done+\dtwo} \max \{r,|x|\}^{\hpdeg\dtwo},
\]
so $\RR^{\done}\times \RR^{\dtwo}$ with the Lebesgue measure and the distance $\dist$ is a doubling metric measure space of homogeneous dimension $Q = \done+(1+\hpdeg)\dtwo$.
\item $\opL$ satisfies Gaussian-type heat kernel bounds relative to $\dist$, i.e., there exists $b>0$ such that
\[
|\Kern_{\exp(-t\opL)}(z,z')| \lesssim \Vol(z',t^{1/2})^{-1} \exp(-b \dist(z,z')^2/t),
\]
where $\Kern_{\exp(-t\opL)}$ denotes the integral kernel of the operator $\exp(-t\opL)$.
\end{enumerate}
\end{prp}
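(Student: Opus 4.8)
The plan is to obtain all three assertions directly from the general theory developed in \cite{robinson_analysis_2008}, which treats operators of the form \eqref{eq:grushin_intro} under the single hypothesis that the coefficient $V$ satisfies the two-sided bound \eqref{eq:rs_cond}. Consequently the only step with any real content is to check that our decomposable potential $V(x) = \sum_{\jone=1}^\done V_\jone(x_\jone)$, with each $V_\jone \in \Pot_{\even\convex}^{2\hpdeg}(\co)$, does satisfy \eqref{eq:rs_cond} for some constant depending only on $\co$, $\done$ and $\hpdeg$; once this is in hand, $\opL$ (taken with the self-adjoint realisation of \cite[Section 2]{robinson_analysis_2008}) is exactly one of the operators covered there, and parts (i)--(iii) are read off from the corresponding results.

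To verify this reduction I would first recall that, by the definition of $\Pot_{\even\convex}^{2\hpdeg}(\co)$, each summand satisfies $\co^{-1}|x_\jone|^{2\hpdeg} \leq V_\jone(x_\jone) \leq \co|x_\jone|^{2\hpdeg}$. Summing over $\jone = 1,\dots,\done$ and using the elementary bounds
\[
\max_{\jone}|x_\jone|^{2\hpdeg} \;\leq\; \sum_{\jone=1}^\done |x_\jone|^{2\hpdeg} \;\leq\; \done\,\max_{\jone}|x_\jone|^{2\hpdeg},
\]
together with $\max_{\jone}|x_\jone| \simeq |x|$ (with constants depending only on $\done$), one sees that $V$ obeys \eqref{eq:rs_cond} with a constant depending only on $\co$, $\done$ and $\hpdeg$. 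It is worth noting that this is one of the few points in the paper where neither the convexity of the $V_\jone$ nor the restriction $\hpdeg > 1/2$ is used.

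It then remains to invoke \cite{robinson_analysis_2008}: part (i) is a direct instance of the two-sided estimate for the control (Carnot--Carath\'eodory) distance attached to $\opL$, which in \cite{robinson_analysis_2008} is already stated for arbitrary $V$ satisfying \eqref{eq:rs_cond}; part (ii) follows from part (i) by integrating out the $\dist$-balls, which yields the displayed formula for $\Vol(z,r)$ and hence the doubling property with homogeneous dimension $Q = \done + (1+\hpdeg)\dtwo$; and part (iii) is the Gaussian-type heat kernel bound for this class of Grushin operators, likewise established in \cite{robinson_analysis_2008}. The one thing that strictly has to be checked rather than merely cited is that the implicit constants produced by the arguments of \cite{robinson_analysis_2008} depend on $V$ only through the comparison constant in \eqref{eq:rs_cond}, and hence, in our situation, only through $\co$, $\done$, $\dtwo$ and $\hpdeg$; this uniformity is transparent from the proofs there, and it is essentially the only point that could be regarded as an obstacle.
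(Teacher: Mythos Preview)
Your proposal is correct and follows exactly the paper's approach: the paper's own proof is nothing more than the one-line citation ``See \cite[Proposition 5.1 and Corollary 6.6]{robinson_analysis_2008}.'' Your added verification that $V(x)=\sum_\jone V_\jone(x_\jone)$ satisfies the two-sided bound \eqref{eq:rs_cond} is a welcome (and entirely straightforward) explicitation of a point the paper leaves implicit.
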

\begin{proof}
See \cite[Proposition 5.1 and Corollary 6.6]{robinson_analysis_2008}.
\end{proof}

Under our assumptions on $V$, the operator $\opL$ can be written as a sum,
\[
\opL = \sum_{\jone=1}^\done \opL_\jone,
\]
where
\[
\opL_\jone = -\partial_{x_\jone}^2 - V_\jone(x_\jone) \Delta_y.
\]
If we define the first-order differential operators
\[
T_\jtwo = -i\partial_{y_\jtwo},
\]
then the operators $\opL_1,\dots,\opL_\done,T_1,\dots,T_\dtwo$ commute pairwise. The joint spectral theory and functional calculus on $L^2$ for the above systems of commuting operators is conveniently described by means of a partial Fourier transform. Indeed, by taking the Fourier transform in the variable $y$, the operator $\opL_\jone$ corresponds to the family of Schr\"odinger operators in the variable $x_\jone$ defined by
\[
\opL_{\jone,\eta} = -\partial_{x_\jone}^2 + |\eta|^2 V_\jone(x_\jone)
\]
where $\eta \in \RR^\dtwo$. Similarly $T_\jtwo$ corresponds to the family of multiplication operators
\[
T_{\jtwo,\eta} = \eta_\jtwo.
\]

Note that $\opL_{\jone,\eta}$ is the Schr\"odinger operator $\opH^{V_\jone}_{|\eta|^2}$ of Section \ref{s:rescaledschroedinger}. In analogy with Section \ref{s:rescaledschroedinger}, for all $\scale \in \Rpos$, let $(E^\jone_n(\scale))_{n\in \Npos}$ denote the (increasing) sequence of eigenvalues of $\opH^{V_\jone}_\scale$ on $L^2(\RR)$, $\pm x_{n,\scale}^\jone$ denote the corresponding transition points (recall that $V_\jone$ is even),
  and $(\psi^{\jone}_{n,\scale})_{n\in \Npos}$ be the corresponding orthonormal sequence of real-valued eigenfunctions.
	
Now, for all $n \in \Npos^\done$, set
\[
\vec E_n(\scale) = (E^1_{n_1}(\scale),\dots,E^\done_{n_\done}(\scale))
\]
and 
\[
\psi_{n,\scale} = \psi^1_{n_1,\scale} \otimes \dots \otimes \psi^\done_{n_\done,\scale}
\]
for all $n \in \Npos^\done$.
If $\vec\opL$ and $\vec T$ denote the ``vectors of operators'' $(\opL_1,\dots,\opL_\done)$ and $(T_1,\dots,T_\dtwo)$ respectively, then, for all bounded Borel functions $F : \RR^{\done} \times \RR^{\dtwo} \to \CC$,
we can write
\[
F(\vec\opL,\vec T) f(z) = \int_{\RR^\done \times \RR^\dtwo} \Kern_{F(\vec\opL,\vec T)}(z,z') \, f(z') \,dz' ,
\]
where, for almost all $z=(x,y)$ and $z'=(x',y')$,
\[
\Kern_{F(\vec\opL,\vec T)}(z,z') = \frac{1}{(2\pi)^\dtwo} \int_{\RR^\dtwo} \sum_{n \in \Npos^\done} F(\vec E_n(|\eta|^2),\eta) \, \psi_{n,|\eta|^2}(x) \, \psi_{n,|\eta|^2}(x') \, e^{i\eta \cdot (y-y')} \, d\eta.
\]
Orthonormality of the eigenfunction systems and the Plancherel formula for the Fourier transform then yield
\begin{equation}\label{eq:exact_plancherel}
\|\Kern_{F(\vec\opL,\vec T)}(\cdot,z')\|_{L^2(\RR^{\done+\dtwo})}^2 = \frac{1}{(2\pi)^\dtwo} \int_{\RR^\dtwo} \sum_{n \in \Npos^\done} |F(\vec E_n(|\eta|^2),\eta)|^2 \, |\psi_{n,|\eta|^2}(x')|^2 \, d\eta
\end{equation}
for almost all $z' = (x',y') \in \RR^{\done+\dtwo}$.

In particular, if we restrict to the joint functional calculus of $\opL = \opL_1 + \dots + \opL_\done$ and $|\vec T|^2 = T_1^2 + \dots +T_\done^2$, and we define
\[
\ESum_n(\scale) = \sum_{\jone=1}^\done E^\jone_{n_\jone}(\scale)
\]
for all $n \in \Npos^\done$ and $\scale\in\Rpos$, 
then the above Plancherel-type identity \eqref{eq:exact_plancherel} simplifies as follows.

\begin{prp}\label{prp:plancherel_radial}
For all bounded Borel functions $F : \RR^2 \to \CC$,
\begin{equation}\label{eq:exact_plancherel_rad}
\|\Kern_{F(\opL,|\vec T|^2)}(\cdot,z')\|_{L^2(\RR^{\done+\dtwo})}^2 
= C_\dtwo \int_{0}^\infty \sum_{n \in \Npos^\done} |F(\ESum_n(\scale),\scale)|^2 \, |\psi_{n,\scale}(x')|^2 \, \scale^{\dtwo/2-1} \, d\scale ,
\end{equation}
for almost all $z' = (x',y') \in \RR^{\done+\dtwo}$. Here $C_{\dtwo} \in \Rpos$ is a suitable constant.
\end{prp}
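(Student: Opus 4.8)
The plan is to obtain \eqref{eq:exact_plancherel_rad} as the specialisation of the general Plancherel-type identity \eqref{eq:exact_plancherel} to spectral multipliers that factor through the map $(\vec\lambda,\eta)\mapsto(\lambda_1+\dots+\lambda_\done,|\eta|^2)$. Concretely, given a bounded Borel function $F:\RR^2\to\CC$, I would introduce the bounded Borel function $\widetilde F:\RR^\done\times\RR^\dtwo\to\CC$ defined by
\[
\widetilde F(\lambda_1,\dots,\lambda_\done,\eta)=F\Bigl(\sum_{\jone=1}^\done\lambda_\jone,\,|\eta|^2\Bigr),
\]
and first check that $\widetilde F(\vec\opL,\vec T)=F(\opL,|\vec T|^2)$. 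This is where the only substance of the (very short) argument lies: it is a general fact about Borel functional calculi of commuting self-adjoint operators, but here it can be read off directly from the concrete description of the joint functional calculus of $(\opL_1,\dots,\opL_\done,T_1,\dots,T_\dtwo)$ given via the partial Fourier transform earlier in the section. Indeed, on the $n$-th joint eigenspace in the $x$-variables at frequency $\eta$, the operator $\opL_\jone$ acts as the scalar $E^\jone_{n_\jone}(|\eta|^2)$ and $T_\jtwo$ as the scalar $\eta_\jtwo$; hence $\opL=\sum_\jone\opL_\jone$ acts as $\ESum_n(|\eta|^2)$ and $|\vec T|^2=\sum_\jtwo T_\jtwo^2$ acts as $|\eta|^2$, so that composing with $F$ gives the same operator in both orders. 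In particular $\widetilde F(\vec E_n(|\eta|^2),\eta)=F(\ESum_n(|\eta|^2),|\eta|^2)$ for every $n\in\Npos^\done$ and $\eta\in\RR^\dtwo$.

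With this identification in hand, I would plug $\widetilde F$ into \eqref{eq:exact_plancherel}, obtaining for almost all $z'=(x',y')$
\[
\|\Kern_{F(\opL,|\vec T|^2)}(\cdot,z')\|_{L^2(\RR^{\done+\dtwo})}^2=\frac{1}{(2\pi)^\dtwo}\int_{\RR^\dtwo}\sum_{n\in\Npos^\done}|F(\ESum_n(|\eta|^2),|\eta|^2)|^2\,|\psi_{n,|\eta|^2}(x')|^2\,d\eta.
\]
The integrand is a nonnegative Borel function of $\eta$ that depends on $\eta$ only through $|\eta|$ (note that $\psi_{n,\scale}$ is by construction built from the eigenfunctions of $\opH^{V_\jone}_\scale$, which depend on $\eta$ only via $|\eta|^2$). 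Passing to polar coordinates in $\RR^\dtwo$ and integrating out the angular variable contributes a factor $\omega_{\dtwo-1}:=|S^{\dtwo-1}|$ (with $\omega_0=2$ when $\dtwo=1$), after which the substitution $\scale=r^2$, under which $r^{\dtwo-1}\,dr=\tfrac12\,\scale^{\dtwo/2-1}\,d\scale$, yields exactly \eqref{eq:exact_plancherel_rad} with $C_\dtwo=\omega_{\dtwo-1}/(2(2\pi)^\dtwo)$. All exchanges of $\sum$ with $\int$ are legitimate by Tonelli's theorem, since every summand and integrand is nonnegative.

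I do not expect any real obstacle here: the only point requiring a moment's care is the identity $\widetilde F(\vec\opL,\vec T)=F(\opL,|\vec T|^2)$, and even this is immediate given the explicit joint functional calculus already set up before the statement. Should a fully self-contained argument be preferred, one can avoid invoking that identification altogether by simply substituting $\widetilde F$ for $F$ in the kernel formula preceding \eqref{eq:exact_plancherel}, repeating verbatim the orthonormality-plus-Plancherel computation that produced \eqref{eq:exact_plancherel}, and then carrying out the same polar-coordinate change of variables.
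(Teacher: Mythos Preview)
Your proposal is correct and follows exactly the approach the paper indicates: the paper does not give a detailed proof but simply presents the proposition as the specialisation of the general Plancherel identity \eqref{eq:exact_plancherel} to the joint functional calculus of $\opL$ and $|\vec T|^2$, and your argument supplies precisely the details (the identification $\widetilde F(\vec\opL,\vec T)=F(\opL,|\vec T|^2)$ via the explicit spectral description, followed by passage to polar coordinates in $\eta$ and the substitution $\scale=r^2$) that this specialisation requires.
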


\subsection{A weighted Plancherel estimate}\label{ss:weighted}

From the Plancherel-type identity of Proposition \ref{prp:plancherel_radial}, we now derive a weighted estimate for $\Kern_{F(\opL)}$.

For all $\gamma \in \Rnon$ and $\jone = 1,\dots,\done$, let $\Mult_\jone^\gamma,\Mult^\gamma : L^2(\RR^{\done \times \dtwo}) \to L^2(\RR^{\done \times \dtwo})$ denote the multiplication operators defined by
\[
\Mult_\jone^\gamma f(z) = |x_j|^\gamma f(z), \qquad 
\Mult^\gamma f(z) = |x|^\gamma f(z)
\]
for all $z = (x,y) \in \RR^\done \times \RR^\dtwo$ and $f \in L^2(\RR^\done \times \RR^\dtwo)$.

By Proposition \ref{eigenvalues-prp}, for all $n \in \Npos^\done$ and $\jone=1,\dots,\done$, the functions $E^\jone_{n_\jone} : \Rpos \to \Rpos$ are continuously differentiable, increasing and invertible, so their sum $\ESum_n : \Rpos \to \Rpos$ is too; let us denote by $\FScale_n : \Rpos \to \Rpos$ the inverse of $\ESum_n$.

\begin{prp}\label{prp:weighted1}
For all $\gamma \in \Rnon$, all bounded Borel functions $F : \RR \to \CC$, and all $z' = (x',y') \in \RR^{\done+\dtwo}$,
\begin{multline}\label{eq:weighted1}
\| W^\gamma \Kern_{F(\opL)}(\cdot,z')\|_{L^2(\RR^{\done+\dtwo})}^2 \\
\lesssim_\gamma \int_0^\infty |F(\lambda)|^2 \sum_{n \in \Npos^\done} \frac{\lambda^{\gamma/\hpdeg+1}}{\FScale_n(\lambda)^{\gamma/\hpdeg+1-\dtwo/2}} \, |\psi_{n,\FScale_n(\lambda)}(x')|^2 \, \FScale_n'(\lambda) \, \frac{d\lambda}{\lambda}.
\end{multline}
\end{prp}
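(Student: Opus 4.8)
The plan is to use the partial Fourier transform in $y$ to reduce the weighted estimate to a family — parametrised by $\scale = |\eta|^2$ — of weighted bounds for the kernels of $F(\opL_\scale)$ on $L^2(\RR^\done_x)$, where $\opL_\scale = -\Delta_x + \scale V = \sum_{\jone} \opH^{V_\jone}_\scale$, and then to control the weight by means of the Riesz transform bound of Proposition \ref{apriori_scaled-prp}, promoted to an operator inequality.

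First, since $|x|^\gamma \lesssim_\gamma \sum_{\jone=1}^\done |x_\jone|^\gamma$, it suffices to bound $\| \Mult_\jone^\gamma \Kern_{F(\opL)}(\cdot,z')\|_2^2$ for each $\jone \in \{1,\dots,\done\}$ by a fixed multiple of the right-hand side of \eqref{eq:weighted1}. The crux is the following uniform operator inequality: for every $\scale > 0$,
\[
|x_\jone|^{2\gamma} \lesssim_\gamma \scale^{-\gamma/\hpdeg} \, (\opH^{V_\jone}_\scale)^{\gamma/\hpdeg}
\]
as quadratic forms on $L^2(\RR_{x_\jone})$ (and hence, tensoring with the identity in the remaining variables, on $L^2(\RR^\done)$). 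To establish it I would first note that $V_\jone \in \Pot^{2\hpdeg}_{\even\convex}(\co)$ forces $|x_\jone|^{2\hpdeg} \le \co \, V_\jone(x_\jone)$, so that pointwise $|x_\jone|^{2\gamma} \le \co^{\gamma/\hpdeg} \, \scale^{-\gamma/\hpdeg} \, (\scale V_\jone(x_\jone))^{\gamma/\hpdeg}$; since $\scale V_\jone \in \MPot^{2\hpdeg}(\co)$, Proposition \ref{apriori_scaled-prp} gives $\| (\scale V_\jone)^k g \|_2 \lesssim_k \| (\opH^{V_\jone}_\scale)^k g \|_2$ for all $k \in \NN$, uniformly in $\scale$, i.e.\ $(\scale V_\jone)^{2k} \lesssim_k (\opH^{V_\jone}_\scale)^{2k}$ as forms; finally, the operator monotonicity of $t \mapsto t^\theta$ for $\theta \in (0,1]$ (the L\"owner--Heinz inequality) self-improves this to $(\scale V_\jone)^r \lesssim_r (\opH^{V_\jone}_\scale)^r$ for \emph{every} real $r \ge 0$, and the choice $r = \gamma/\hpdeg$ yields the claim.

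Granting this, here is the rest. By the partial Fourier transform in $y$ (see the discussion preceding Proposition \ref{prp:plancherel_radial}) together with Plancherel's theorem in the variable $y$ — applicable after the multiplication by $|x_\jone|^{2\gamma}$, which affects only $x$ — one obtains
\[
\| \Mult_\jone^\gamma \Kern_{F(\opL)}(\cdot,z')\|_{L^2(\RR^{\done+\dtwo})}^2 = (2\pi)^{-\dtwo} \int_{\RR^\dtwo} \| \Mult_\jone^\gamma \Kern_{F(\opL_{|\eta|^2})}(\cdot,x')\|_{L^2(\RR^\done)}^2 \, d\eta .
\]
For a fixed $\scale > 0$ I would expand $\Kern_{F(\opL_\scale)}(\cdot,x')$ in the orthonormal eigenbasis $(\psi_{n,\scale})_{n \in \Npos^\done}$ of $\opL_\scale$ — note that $\Mult_\jone^\gamma \psi_{n,\scale} \in L^2$ by the exponential decay of eigenfunctions proved in Section \ref{s:schroedinger} — and then apply the operator inequality above, using that $\opH^{V_\jone}_\scale$ is diagonalised by the \emph{same} basis, so that orthonormality of $(\psi_{n,\scale})_n$ annihilates all cross terms; this gives
\[
\| \Mult_\jone^\gamma \Kern_{F(\opL_\scale)}(\cdot,x')\|_{L^2(\RR^\done)}^2 \lesssim_\gamma \scale^{-\gamma/\hpdeg} \sum_{n \in \Npos^\done} E^\jone_{n_\jone}(\scale)^{\gamma/\hpdeg} \, |F(\ESum_n(\scale))|^2 \, |\psi_{n,\scale}(x')|^2 .
\]
It then remains to assemble everything: bound $E^\jone_{n_\jone}(\scale) \le \ESum_n(\scale)$ and sum over $\jone$; rewrite $\int_{\RR^\dtwo} (\cdots) \, d\eta$ as a constant (depending on $\dtwo$) times $\int_0^\infty (\cdots) \, \scale^{\dtwo/2-1} \, d\scale$, the integrand depending on $\eta$ only through $|\eta|^2$; interchange the sum over $n$ and the integral over $\scale$ (all terms being nonnegative); and, in each term, carry out the substitution $\lambda = \ESum_n(\scale)$, i.e.\ $\scale = \FScale_n(\lambda)$, $d\scale = \FScale_n'(\lambda) \, d\lambda$, which is legitimate since $\ESum_n$ is an increasing $C^1$ diffeomorphism of $\Rpos$ by Proposition \ref{eigenvalues-prp}. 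Interchanging sum and integral back and using $\lambda^{\gamma/\hpdeg} = \lambda^{\gamma/\hpdeg+1}/\lambda$ produces precisely the right-hand side of \eqref{eq:weighted1}.

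I expect the main obstacle to be the operator inequality of the second paragraph: upgrading the scalar Riesz transform bound into a bound for the weight $|x_\jone|^{2\gamma}$ that is simultaneously uniform in the scaling parameter $\scale$ — which is exactly what Proposition \ref{apriori_scaled-prp} was built to provide — and valid for the fractional exponent $\gamma/\hpdeg$ rather than merely for integer powers, the passage through L\"owner--Heinz being what makes the fractional power accessible. A secondary, more routine issue is that for a general bounded Borel $F$ the kernel of $F(\opL)$ need not be square-integrable in $z$: one first proves \eqref{eq:weighted1} for $F$ bounded with compact support in $(0,\infty)$ — the only case required in Section \ref{ss:main} — so that the eigenfunction expansions above are finite sums, and then recovers the general statement by a routine monotone-convergence argument.
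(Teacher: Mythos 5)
Your proof is correct, and it shares the skeleton of the paper's argument --- both reduce matters to the Riesz transform bound of Proposition \ref{apriori_scaled-prp}, the Plancherel-type identity obtained from the partial Fourier transform in $y$, and the change of variables $\lambda = \ESum_n(\scale)$ --- but the two proofs diverge in how the non-integer exponent $\gamma$ is reached, and in the order of operations. The paper stays global: it first combines the one-dimensional Riesz bounds into $\|W^{2N\hpdeg} f\|_2 \lesssim_N \|\opL^N |\vec T|^{-2N} f\|_2$ for integer $N$, interpolates this family of inequalities to get $\|W^\gamma f\|_2 \lesssim_\gamma \|\opL^{\gamma/(2\hpdeg)}|\vec T|^{-\gamma/\hpdeg} f\|_2$ for all $\gamma \geq 0$, and only then applies the radial Plancherel identity \eqref{eq:exact_plancherel_rad} to the modified joint multiplier $\opL^{\gamma/(2\hpdeg)}|\vec T|^{-\gamma/\hpdeg}F(\opL)$. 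You instead fibre first over $\scale = |\eta|^2$ and obtain the fractional-power form inequality $(\scale V_\jone)^{r} \lesssim_r (\opH^{V_\jone}_\scale)^{r}$ on each fibre by combining the integer-power bound with the L\"owner--Heinz inequality; the bound $E^\jone_{n_\jone}(\scale) \leq \ESum_n(\scale)$ then plays the role of the paper's passage from $\opL_\jone$ to $\opL$. What your route buys is that it avoids setting up an interpolation of analytic families of (unbounded) operators, replacing it with operator monotonicity applied eigenbasis-by-eigenbasis, where the cross terms visibly vanish; what it costs is the invocation of L\"owner--Heinz for unbounded operators and a slightly more hands-on treatment of domains (harmless here, since for $F$ compactly supported in $(0,\infty)$ the kernel of $F(\opL_\scale)$ is a finite combination of rapidly decaying eigenfunctions, and your closing truncation argument handles general $F$). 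Both yield the same intermediate expression $|\ESum_n(\scale)/\scale|^{\gamma/\hpdeg}$ and the same final substitution, so the constants and the statement match.
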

\begin{proof}

Let
\[
f^\eta(x) = \int_{\RR^\dtwo} f(x,y) \, e^{-i \eta \cdot y} \,dy
\]
denote the partial Fourier transform of $f \in L^2(\RR^{\done}_x \times \RR^{\dtwo}_y)$ in the $y$-variable. Then, for all $N \in \NN$  and $\jone=1,\dots,\done$,
\begin{multline*}
\|\opL_\jone^N f\|_2^2 = \frac{1}{(2\pi)^\dtwo} \int_{\RR^\dtwo} \|(\opL_{\jone,\eta})^N f^\eta\|_2^2 \,d\eta \\
 \gtrsim_N \frac{1}{(2\pi)^\dtwo} \int_{\RR^\dtwo} \|(|\eta|^2 V_\jone)^N f^\eta\|_2^2 \,d\eta = \| V_\jone^N |\vec T|^{2N}f\|_2^2,
\end{multline*}
by Proposition \ref{apriori_scaled-prp}. Hence
\[
\|W_\jone^{2N\hpdeg} f\|_2 \lesssim_N \|\opL_\jone^N |\vec T|^{-2N} f\|_2
\]
for all $N \in \NN$ and $\jone=1,\dots,\done$, and therefore
\[
\|W^{2N\hpdeg} f\|_2 \lesssim_N \| \opL^N|\vec T|^{-2N} f\|_2
\]
for all $N \in \NN$. Finally, by interpolation,
\begin{equation}\label{eq:weight_control}
\| W^\gamma f \|_2 \lesssim_\gamma \| \opL^{\gamma/(2\hpdeg)} |\vec T|^{-\gamma/\hpdeg} f\|_2
\end{equation}
for all $\gamma \in \Rnon$.

By combining \eqref{eq:exact_plancherel_rad} and \eqref{eq:weight_control}, we obtain
\[\begin{split}
&\| W^\gamma \Kern_{F(\opL)}(\cdot,z')\|_{L^2(\RR^{\done+\dtwo})}^2 \\
&\lesssim_\gamma \| \opL^{\gamma/(2\hpdeg)} |\vec T|^{-\gamma/\hpdeg} \Kern_{F(\opL)}(\cdot,z')\|_{L^2(\RR^{\done+\dtwo})}^2 \\
&= \|  \Kern_{ \opL^{\gamma/(2\hpdeg)} |\vec T|^{-\gamma/\hpdeg} F(\opL)}(\cdot,z')\|_{L^2(\RR^{\done+\dtwo})}^2 \\
&\simeq_\gamma \int_0^\infty \sum_{n \in \Npos^\done} |F(\ESum_n(\scale))|^2 \, |\ESum_n(\scale)/\scale|^{\gamma/\hpdeg} \, |\psi_{n,\scale}(x')|^2 \,\scale^{\dtwo/2-1}\, d\scale.
\end{split}\]
Now, since $\ESum_n : \Rpos \to \Rpos$ is increasing and invertible and its inverse $\FScale_n : \Rpos \to \Rpos$ is continuously differentiable, we can use the change of variable $\lambda = \ESum_n(\scale)$ in the last integral and obtain the conclusion.
\end{proof}

The right-hand side of \eqref{eq:weighted1} can be thought of as the $L^2$-norm of $F$ with respect to a (weighted) ``Plancherel measure'', whose density with respect to the Lebesgue measure on $\Rpos$ is expressed in terms of eigenvalues and eigenfunctions of Schr\"odinger operators. We want now to obtain a precise estimate of this density, by means of the bounds obtained in Sections \ref{s:schroedinger} and \ref{s:rescaledschroedinger}.

We first rewrite in a more convenient form the previously obtained estimates for eigenvalues and transition points (which in turn enter into estimates for eigenfunctions). It will be convenient to denote by
\[
\tilde x_{n,\lambda} = (x^1_{n_1,\FScale_n(\lambda)},\dots,x^{\done}_{n_\done,\FScale_n(\lambda)})
\]
the vector of transition points corresponding to $n \in \Npos^\done$ and $\lambda \in \Rpos$. As another application of Theorem \ref{titchmarsh-thm}, we also obtain an estimate for ``gaps'' between transition points that will play an important role in what follows.

\begin{lem}\label{lem:transition_est}
For all $\lambda \in \Rpos$ and $n \in \Npos^\done$,
\begin{equation}
\label{eq:inv_eigen} \FScale_{n}(\lambda) \simeq \lambda \FScale'_{n}(\lambda) \simeq \lambda^{1+\hpdeg} \, |n|^{-2\hpdeg}.
\end{equation}
Moreover, for all $\lambda \in \Rpos$, $n \in \Npos^\done$, $\jone \in \{1,\dots,\done\}$,
\begin{gather}
\label{eq:trans_est1} \lambda^{1/2} (\tilde x_{n,\lambda})_\jone \simeq |n|^{\hpdeg/(1+\hpdeg)} \, n_\jone^{1/(1+\hpdeg)},\\
\label{eq:trans_est2} \lambda^{1/2} |\tilde x_{n,\lambda}| \simeq |n|.
\end{gather}
In addition there exists a universal constant $\uK \in \Npos$ such that, for all $\lambda \in \Rpos$ and all distinct $n,n' \in \Npos^\done$,
\begin{equation}\label{eq:gap}
\lambda^{1/2} \max\{ |(\tilde x_{n',\lambda})_\jone - (\tilde x_{n,\lambda})_\jone| \tc \jone=1,\dots,\done, \, n_\jone \neq n'_\jone \} \gtrsim 1
\end{equation}
whenever
\begin{equation}\label{eq:gap_ass}
\min(\{|n_\jone-n'_\jone| \tc \jone=1,\dots,\done, \, n_\jone \neq n'_\jone \}) \geq \uK.
\end{equation}
\end{lem}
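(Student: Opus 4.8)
The plan is to obtain \eqref{eq:inv_eigen}--\eqref{eq:trans_est2} as elementary consequences of Proposition \ref{eigenvalues-prp}, and to prove the separation bound \eqref{eq:gap} by using the Bohr--Sommerfeld formula (Theorem \ref{titchmarsh-thm}) to tie quantum numbers to transition points. For \eqref{eq:inv_eigen}: Proposition \ref{eigenvalues-prp}\ref{en:eigenvalues1} applied to each $V_\jone\in\Pot^{2\hpdeg}(\co)$ gives $E^\jone_{n_\jone}(\scale)\simeq\scale^{1/(1+\hpdeg)}n_\jone^{2\hpdeg/(1+\hpdeg)}$; summing over $\jone$ and using that $v\mapsto\sum_\jone v_\jone^{2\hpdeg/(1+\hpdeg)}$ is comparable to $|v|^{2\hpdeg/(1+\hpdeg)}$ for any norm on $\RR^\done$ (constants depending on $\done,\hpdeg$) yields $\ESum_n(\scale)\simeq\scale^{1/(1+\hpdeg)}|n|^{2\hpdeg/(1+\hpdeg)}$, and solving for $\scale$ gives $\FScale_n(\lambda)\simeq\lambda^{1+\hpdeg}|n|^{-2\hpdeg}$; differentiating $\ESum_n(\FScale_n(\lambda))=\lambda$ gives $\FScale_n'=1/\ESum_n'(\FScale_n)$, and Proposition \ref{eigenvalues-prp}\ref{en:eigenvalues2} summed over $\jone$ gives $\ESum_n'(\scale)\simeq\scale^{-1}\ESum_n(\scale)$, so evaluating at $\scale=\FScale_n(\lambda)$ yields $\lambda\FScale_n'(\lambda)\simeq\FScale_n(\lambda)$. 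For \eqref{eq:trans_est1}, $(\tilde x_{n,\lambda})_\jone=x^\jone_{n_\jone,\FScale_n(\lambda)}\simeq(n_\jone/\FScale_n(\lambda)^{1/2})^{1/(1+\hpdeg)}$ by Proposition \ref{eigenvalues-prp}\ref{en:eigenvalues1}, and inserting $\FScale_n(\lambda)^{1/2}\simeq\lambda^{(1+\hpdeg)/2}|n|^{-\hpdeg}$ and simplifying gives the claim; summing over $\jone$ (with $\sum_\jone n_\jone^{1/(1+\hpdeg)}\simeq|n|^{1/(1+\hpdeg)}$) gives \eqref{eq:trans_est2}.

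For \eqref{eq:gap} the first step is the \emph{action identity}. Each $V_\jone$ is convex (it lies in $\Pot^{2\hpdeg}_{\even\convex}(\co)$), so Theorem \ref{titchmarsh-thm} applies to $\scale V_\jone$ for every $\scale\in\Rpos$; combining it with the defining identity $\scale V_\jone(x^\jone_{m,\scale})=E^\jone_m(\scale)$ and the evenness of $V_\jone$, and pulling $\scale^{1/2}$ out of the square root, yields
\[
\pi m=\scale^{1/2}\,\Gamma_\jone(x^\jone_{m,\scale})+\BigO(1),\qquad \Gamma_\jone(x)\defeq\int_{-x}^{x}\sqrt{V_\jone(x)-V_\jone(t)}\,dt,
\]
with an \emph{absolute} error term. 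Using convexity and the pointwise bounds on $V_\jone,V_\jone'$ one checks that $\Gamma_\jone$ extends to a strictly increasing $C^1$ function on $\Rnon$ with $\Gamma_\jone(x)\simeq x^{1+\hpdeg}$ and $\Gamma_\jone'(x)\simeq x^\hpdeg$. Writing $x_\jone=(\tilde x_{n,\lambda})_\jone$, $\scale_n=\FScale_n(\lambda)$ (and similarly $x'_\jone,\scale_{n'}$ for $n'$), this gives $\scale_n^{1/2}\Gamma_\jone(x_\jone)=\pi n_\jone+\BigO(1)$ for every $\jone$, and likewise for $n'$.

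Now fix distinct $n,n'\in\Npos^\done$ and $\lambda\in\Rpos$, write $S=\{\jone:n_\jone\neq n'_\jone\}$, assume \eqref{eq:gap_ass} with $\uK$ to be chosen (depending only on $\hpdeg,\co,\done$), and, after possibly swapping $n$ and $n'$, assume $\scale_n\geq\scale_{n'}$, equivalently $|n|\lesssim|n'|$ by \eqref{eq:inv_eigen}. If $|n'|\geq C_0|n|$ for a suitable $C_0$, then $\sum_\jone(n'_\jone-n_\jone)=|n'|-|n|\gtrsim|n'|$ forces some $\jone^\ast\in S$ with $n'_{\jone^\ast}\gtrsim|n|$; by \eqref{eq:trans_est1} this gives $\lambda^{1/2}x'_{\jone^\ast}\gtrsim C_0|n|$ while $\lambda^{1/2}x_{\jone^\ast}\lesssim|n|$, so $\lambda^{1/2}|x_{\jone^\ast}-x'_{\jone^\ast}|\gtrsim|n|\geq1$ once $C_0$ is large. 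This is the easy case.

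The remaining, and hardest, case is $|n|\simeq|n'|$ (hence $\scale_n\simeq\scale_{n'}$): here the two action identities at a coordinate $\jone^\ast\in S$ realising $\max_\jone|n_\jone-n'_\jone|$ cannot be subtracted directly, because $\scale_n$ and $\scale_{n'}$ are only comparable, and the $\simeq$-precision of \eqref{eq:inv_eigen} does not control $\scale_n^{1/2}-\scale_{n'}^{1/2}$ finely enough; this mismatch of scales is the crux. The way around it is to exploit that all coordinates of $n$ share the \emph{same} scale $\scale_n$: comparing $x_{\jone^\ast}$ with the auxiliary transition point $x^{\jone^\ast}_{n'_{\jone^\ast},\scale_n}$ (same scale, different quantum number) through the action identity and the mean value theorem for $\Gamma_{\jone^\ast}$, and the size bound $x_{\jone^\ast},x^{\jone^\ast}_{n'_{\jone^\ast},\scale_n}\lesssim\lambda^{-1/2}|n|$, one gets $\lambda^{1/2}|x_{\jone^\ast}-x^{\jone^\ast}_{n'_{\jone^\ast},\scale_n}|\gtrsim\uK$; it then remains to control $\lambda^{1/2}|x^{\jone^\ast}_{n'_{\jone^\ast},\scale_n}-x'_{\jone^\ast}|$, which amounts to showing that $\scale_n/\scale_{n'}$ is \emph{quantitatively} close to $1$. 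This is where the energy identity $\lambda=\scale_n\sum_\jone V_\jone(x_\jone)=\scale_{n'}\sum_\jone V_\jone(x'_\jone)$ (again a consequence of the transition-point identities) and the fact that the action vectors $(\Gamma_\jone(x_\jone))_\jone$, $(\Gamma_\jone(x'_\jone))_\jone$ are pinned in direction near $n/|n|$, $n'/|n'|$ enter: a careful argument shows that if $\lambda^{1/2}|x_\jone-x'_\jone|$ were small for \emph{all} $\jone\in S$, then $\scale_n/\scale_{n'}$ would be so close to $1$ that the additive $\BigO(1)$ precision of Bohr--Sommerfeld at a coordinate of $S$ would force $\min_{\jone\in S}|n_\jone-n'_\jone|$ below an absolute constant, contradicting \eqref{eq:gap_ass} once $\uK$ is large. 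Keeping all implicit constants uniform in $(n,n',\lambda)$ while navigating the two-scale mismatch is the main difficulty.
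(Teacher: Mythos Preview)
Your treatment of \eqref{eq:inv_eigen}--\eqref{eq:trans_est2} matches the paper's exactly.

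For the gap estimate \eqref{eq:gap}, however, you are making life much harder than necessary, and the hard case is left as a sketch (``a careful argument shows\dots'') at precisely the point where the work lies. The paper avoids the whole two-scale mismatch you describe by a simple sign trick in the choice of coordinate. After subtracting the two Bohr--Sommerfeld identities at a coordinate $\jone$,
\[
n'_\jone-n_\jone+\BigO(1)
=(\FScale_{n'}^{1/2}-\FScale_n^{1/2})\,H_\jone((\tilde x_{n',\lambda})_\jone)
+\FScale_n^{1/2}\bigl(H_\jone((\tilde x_{n',\lambda})_\jone)-H_\jone((\tilde x_{n,\lambda})_\jone)\bigr),
\]
the paper does \emph{not} pick $\jone$ maximising $|n_\jone-n'_\jone|$; instead it observes that (after possibly swapping $n$ and $n'$) one can always find $\jone$ with $n'_\jone-n_\jone\geq\uK$ \emph{and} $\FScale_{n'}(\lambda)\leq\FScale_n(\lambda)$. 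Indeed, if $n'-n$ has two nonzero components of opposite sign, choose the one whose sign agrees with $\FScale_n-\FScale_{n'}$; otherwise $n'\geq n$ componentwise, whence $\FScale_{n'}\leq\FScale_n$ by monotonicity of eigenvalues. With this choice the first term on the right is $\leq 0$ and can simply be dropped, giving
\[
1\leq \FScale_n(\lambda)^{1/2}\bigl(H_\jone((\tilde x_{n',\lambda})_\jone)-H_\jone((\tilde x_{n,\lambda})_\jone)\bigr)
=\FScale_n(\lambda)^{1/2}H_\jone'(s)\bigl((\tilde x_{n',\lambda})_\jone-(\tilde x_{n,\lambda})_\jone\bigr),
\]
and the estimate $\FScale_n(\lambda)^{1/2}H_\jone'(s)\lesssim\lambda^{1/2}$ (from $H_\jone'\simeq V_\jone^{1/2}$ and the definition of transition points) finishes the proof in a few lines. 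There is no need for your case split $|n'|\gtrless C_0|n|$, no auxiliary transition point, and no quantitative control of $\scale_n/\scale_{n'}$.
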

\begin{proof}
By Proposition \ref{eigenvalues-prp},
\[
E^\jone_{n_\jone}(\scale) \simeq \scale \partial_\scale E^\jone_{n_\jone}(\scale) \simeq \scale^{1/(1+\hpdeg)} \, n_\jone^{2\hpdeg/(1+\hpdeg)},
\]
hence
\[
\ESum_{n}(\scale) \simeq \scale \partial_\scale \ESum_{n}(\scale) \simeq \scale^{1/(1+\hpdeg)} \, |n|^{2\hpdeg/(1+\hpdeg)},
\]
and \eqref{eq:inv_eigen}
follows, since $\FScale_n$ is the inverse of $\ESum_n$. This estimate, combined again with Proposition \ref{eigenvalues-prp}, yields \eqref{eq:trans_est1} and \eqref{eq:trans_est2}.

We know that, for fixed $n \in \Npos$, the function $\ESum_n$ and $\FScale_n$ are strictly increasing. Note now that, if $n\leq n'$ componentwise, then $\ESum_n(\scale) \leq \ESum_{n'}(\scale)$  for all $\scale \in \Rpos$; hence, for all $\lambda \in \Rpos$, $\FScale_{n'}(\lambda) \leq \FScale_{n}(\lambda)$ whenever $n\leq n'$.

Recall that the transition points $(\tilde x_{n,\lambda})_\jone \in \Rpos$ are defined by
\[
\FScale_n(\lambda) V_\jone((\tilde x_{n,\lambda})_\jone) = E_{n_\jone}^j(\FScale_n(\lambda))
\]
and (since the potentials $V_\jone$ are even and convex) Theorem \ref{titchmarsh-thm} yields that
\begin{equation}\label{eq:bohrsommerfeld_app}
n_\jone = \FScale_n(\lambda)^{1/2} H_\jone((\tilde x_{n,\lambda})_\jone) + \BigO(1),
\end{equation}
where $|\BigO(1)| \leq (\uK-1)/2$ for some universal constant $\uK \in \Npos$ and
\begin{equation}\label{eq:reduced-bohrsommerfeld}
H_\jone(t) = \frac{2}{\pi} \int_0^t (V_\jone(t)-V_\jone(s))^{1/2} \,ds \qquad (t\in\Rpos).
\end{equation}
Note that $H_\jone$ is continuously differentiable and
\[
H_\jone'(t) = \frac{1}{\pi} \int_0^t \frac{V_\jone'(t)}{(V_\jone(t)-V_\jone(s))^{1/2}} \,ds = \frac{1}{\pi} \int_0^{V_\jone(t)} \frac{V_\jone'(t) (V_\jone^{-1})'(v)}{(V_\jone(t)-v)^{1/2}} \,dv.
\]
Recall that, since $V_\jone \in \Pot^{2\hpdeg}(\co)$, for all $t \in \Rpos$,
\[
V_\jone(t) \simeq t V_\jone'(t) \simeq t^{2\hpdeg},
\]
hence, for all $v \in \Rpos$,
\[
V_\jone^{-1}(v) \simeq v (V_\jone^{-1})'(v) \simeq v^{1/(2\hpdeg)}
\]
and
\begin{equation}\label{eq:der_bohrsommerfeld}\begin{split}
H_\jone'(t) &\simeq \frac{V_\jone(t)}{t} \int_0^{V_\jone(t)} \frac{v^{1/(2\hpdeg)-1}}{(V_\jone(t)-v)^{1/2}} \,dv \\
&= \frac{V_\jone(t)^{1/2+1/(2\hpdeg)}}{t} \int_0^{1} \frac{v^{1/(2\hpdeg)-1}}{(1-v)^{1/2}} \,dv \simeq V_\jone(t)^{1/2}.
\end{split}\end{equation}

From \eqref{eq:bohrsommerfeld_app} we then have, for all $n,n' \in \Npos^\done$,
\[\begin{split}
n'_\jone - n_\jone + \BigO(1) 
&= \FScale_{n'}(\lambda)^{1/2} H_\jone((\tilde x_{n',\lambda})_\jone) - \FScale_n(\lambda)^{1/2} H_\jone((\tilde x_{n,\lambda})_\jone) \\
&= (\FScale_{n'}(\lambda)^{1/2}-\FScale_{n}(\lambda)^{1/2}) H_\jone((\tilde x_{n',\lambda})_\jone) \\&\qquad+ \FScale_n(\lambda)^{1/2} (H_\jone((\tilde x_{n',\lambda})_\jone) - H_\jone((\tilde x_{n,\lambda})_\jone)),
\end{split}\]
where now $|\BigO(1)| \leq \uK-1$.

Assume now that $n,n'$ satisfy the assumptions \eqref{eq:gap_ass}. Then we claim that, up to switching $n$ and $n'$, we can find $\jone \in \{1,\dots,\done\}$ so that
\[
n'_\jone - n_\jone \geq \uK \qquad\text{and}\qquad \FScale_{n'}(\lambda) \leq \FScale_{n}(\lambda).
\]
Indeed, this is clear when $n'-n$ has two nonzero components of opposite signs, since in this case one can choose the component with the same sign as $\FScale_{n}(\lambda)-\FScale_{n'}(\lambda)$. Otherwise, up to switching, $n' \geq n$ componentwise, but then $\FScale_{n'}(\lambda) \leq \FScale_{n}(\lambda)$ and any nonzero component of $n'-n$ will do.

Under these assumptions, $\FScale_{n'}(\lambda)^{1/2}-\FScale_{n}(\lambda)^{1/2} \leq 0$ and therefore
\begin{equation}\label{eq:gap_lb_bohrsommerfeld}\begin{split}
1 \leq n'_\jone - n_\jone + \BigO(1) 
&\leq \FScale_n(\lambda)^{1/2} (H_\jone((\tilde x_{n',\lambda})_\jone) - H_\jone((\tilde x_{n,\lambda})_\jone)) \\
&= \FScale_n(\lambda)^{1/2} \, H_\jone'(s) \, ((\tilde x_{n',\lambda})_\jone - (\tilde x_{n,\lambda})_\jone)
\end{split}\end{equation}
for some $s$ between $(\tilde x_{n,\lambda})_\jone$ and $(\tilde x_{n',\lambda})_\jone$. 

Since $H_j$ is increasing, the above inequality shows that $(\tilde x_{n',\lambda})_\jone > (\tilde x_{n,\lambda})_\jone$.
Moreover, if $(\tilde x_{n',\lambda})_\jone \geq 2(\tilde x_{n,\lambda})_\jone$, then inequality \eqref{eq:trans_est1} gives
\[
\lambda^{1/2} ((\tilde x_{n',\lambda})_\jone - (\tilde x_{n,\lambda})_\jone) \geq \lambda^{1/2} (\tilde x_{n,\lambda})_\jone \gtrsim 1;
\]
if instead $(\tilde x_{n',\lambda})_\jone \leq 2(\tilde x_{n,\lambda})_\jone$, then $(\tilde x_{n,\lambda})_\jone \simeq (\tilde x_{n',\lambda})_\jone \simeq s$, hence, by \eqref{eq:der_bohrsommerfeld},
\[
\FScale_n(\lambda)^{1/2} H_\jone'(s) \simeq \FScale_n(\lambda)^{1/2} V_\jone((\tilde x_{n,\lambda})_\jone)^{1/2} = E^\jone_{n_\jone}(\FScale_n(\lambda))^{1/2} \leq \Sigma_n(\FScale_n(\lambda))^{1/2}\leq \lambda^{1/2}
\]
and again, by \eqref{eq:gap_lb_bohrsommerfeld},
\[
\lambda^{1/2} ((\tilde x_{n',\lambda})_\jone - (\tilde x_{n,\lambda})_\jone)
\gtrsim 1.
\]
In any case, \eqref{eq:gap} is proved.
\end{proof}

\begin{rem}
It is in the above proof of the ``gap estimates'' for transition points that the parity assumption on the potentials $V_\jone$ is essentially used, in that it allows one to precisely relate positive and negative transition points, and the corresponding gaps. Indeed, the integral relation in Theorem \ref{titchmarsh-thm} a priori would appear to provide information only on the sum of the gaps between positive transition points and between negative transition points, but not on the two gaps separately. One could however somehow ``relax'' the parity assumption by requiring the potentials $V_\jone$ to satisfy
\[
V_\jone(-t) = V_\jone( \theta_\jone t)
\]
for some constants $\theta_\jone \in \Rpos$ and all $t \in \Rpos$; indeed, all the results of Section \ref{s:grushin} could be obtained, mutatis mutandis, under this more general ``skewed parity'' assumption.
\end{rem}

Define $|y|_\infty \defeq \max_j|y_j|$ for $y \in \RR^\done$. We now rewrite in a convenient form the estimates for eigenfunctions.

\begin{lem}\label{lem:ef_estimates}
For all $\lambda \in \Rpos$, $n \in \Npos^\done$, $x \in \RR^\done$ and $\jone \in \{1,\dots,\done\}$,
\begin{equation}\label{eq:ef_trans}
\lambda^{-1/2} |\psi^\jone_{n_\jone,\FScale_n(\lambda)}(x_\jone)|^2 \lesssim 
(\lambda^{1/2} (\tilde x_{n,\lambda})_\jone)^{-1/2} (1 + ||\lambda^{1/2} x_\jone| - \lambda^{1/2} (\tilde x_{n,\lambda})_\jone |)^{-1/2}.
\end{equation}
Moreover there exists $\kappa \simeq 1$ such that, for all $N \in \NN$,
\begin{equation}\label{eq:ef_exp_joint}
\lambda^{-\done/2} |\psi_{n,\FScale_n(\lambda)}(x)|^2 \lesssim_N (\lambda^{1/2} |x|)^{-N}
\end{equation}
whenever $|x|_\infty \geq \kappa  |\tilde x_{n,\lambda}|_\infty$.
\end{lem}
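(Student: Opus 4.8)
The plan is to derive both estimates from the one-dimensional pointwise bounds of Proposition \ref{eigenfunctions-prp} (and its weakened form in Remark \ref{rem:weaker-unif}), applied factor by factor. The starting point is the identification $\psi^\jone_{n_\jone,\FScale_n(\lambda)}=\psi_{n_\jone}^{\FScale_n(\lambda)V_\jone}$, the $n_\jone$-th eigenfunction of $\opH^{\FScale_n(\lambda)V_\jone}$, where $\FScale_n(\lambda)V_\jone\in\MPot^{2\hpdeg}(\co)$, the eigenvalue is $E^\jone_{n_\jone}(\FScale_n(\lambda))$, and the (positive) transition point is $(\tilde x_{n,\lambda})_\jone$, since $V_\jone$ is even. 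Two elementary facts will be used throughout: $E^\jone_{n_\jone}(\FScale_n(\lambda))\le\ESum_n(\FScale_n(\lambda))=\lambda$; and, combining Remark \ref{rem:weaker-unif} with the lower bound \eqref{eq:trans_eigen_lb},
\[
\|\psi^\jone_{n_\jone,\FScale_n(\lambda)}\|_\infty^2\lesssim (E^\jone_{n_\jone}(\FScale_n(\lambda)))^{1/4}\,(\tilde x_{n,\lambda})_\jone^{-1/2}\lesssim (E^\jone_{n_\jone}(\FScale_n(\lambda)))^{1/2}\le\lambda^{1/2}.
\]

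For \eqref{eq:ef_trans} I would fix $\jone$, set $a=(\tilde x_{n,\lambda})_\jone$ and $t=\bigl||x_\jone|-a\bigr|$, and observe that the right-hand side of \eqref{eq:ef_trans} equals $(\lambda^{1/2}a)^{-1/2}(1+\lambda^{1/2}t)^{-1/2}$. In the regime $\lambda^{1/2}t\le1$ this is $\simeq(\lambda^{1/2}a)^{-1/2}$, and the claim reduces to $|\psi^\jone_{n_\jone,\FScale_n(\lambda)}(x_\jone)|^2\lesssim\lambda^{1/4}a^{-1/2}$, which follows from the uniform bound $|\psi^\jone_{n_\jone,\FScale_n(\lambda)}(x_\jone)|^2\lesssim a^{-1/2}(E^\jone_{n_\jone}(\FScale_n(\lambda)))^{1/4}\le a^{-1/2}\lambda^{1/4}$. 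In the regime $\lambda^{1/2}t>1$ the right-hand side is $\simeq(\lambda^{1/2}a)^{-1/2}(\lambda^{1/2}t)^{-1/2}=\lambda^{-1/2}a^{-1/2}t^{-1/2}$, and the transition-point estimate of Proposition \ref{eigenfunctions-prp} — valid for every $x_\jone$ of a given sign, with transition point $\pm a$ by evenness of $V_\jone$ — gives $|\psi^\jone_{n_\jone,\FScale_n(\lambda)}(x_\jone)|^2\lesssim a^{-1/2}t^{-1/2}$, so that dividing by $\lambda^{1/2}$ reproduces the right-hand side exactly. (The value $x_\jone=0$, if not already covered, is handled by continuity.) It is worth noting that this part requires neither Lemma \ref{lem:transition_est} nor any eigenvalue asymptotics, and that the matching of exponents leaves no room to spare.

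For \eqref{eq:ef_exp_joint} I would take $\kappa$ to be the constant of Proposition \ref{eigenfunctions-prp}, and pick an index $\jone_0$ with $|x_{\jone_0}|=|x|_\infty$. Since $|x_{\jone_0}|\ge\kappa|\tilde x_{n,\lambda}|_\infty\ge\kappa(\tilde x_{n,\lambda})_{\jone_0}$, the exponential-decay bound of Proposition \ref{eigenfunctions-prp} applies to the $\jone_0$-th factor of $\psi_{n,\FScale_n(\lambda)}(x)=\prod_{\jone=1}^{\done}\psi^\jone_{n_\jone,\FScale_n(\lambda)}(x_\jone)$, while each other factor is controlled by $\|\psi^\jone_{n_\jone,\FScale_n(\lambda)}\|_\infty^2\lesssim\lambda^{1/2}$; using also $E^{\jone_0}_{n_{\jone_0}}(\FScale_n(\lambda))\le\lambda$, the product gives
\[
\lambda^{-\done/2}|\psi_{n,\FScale_n(\lambda)}(x)|^2\lesssim\exp\!\left(-\delta\,|x_{\jone_0}|\,\bigl(\FScale_n(\lambda)\,V_{\jone_0}(x_{\jone_0})\bigr)^{1/2}\right).
\]
Because $V_{\jone_0}(t)\simeq|t|^{2\hpdeg}$, the exponent is $\simeq\bigl(\FScale_n(\lambda)\,|x_{\jone_0}|^{2\hpdeg}\bigr)^{1/2}|x_{\jone_0}|$; since $|x_{\jone_0}|=|x|_\infty$ and $|x|\le\done^{1/2}|x|_\infty$, and $e^{-s}\lesssim_N s^{-N}$, it only remains to prove the lower bound $\FScale_n(\lambda)\,|x|_\infty^{2\hpdeg}\gtrsim\lambda$.

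This last inequality is the one genuinely non-routine step, and it is exactly where the hypothesis $|x|_\infty\ge\kappa|\tilde x_{n,\lambda}|_\infty$ is needed. The naive route through the single energy $E^{\jone_0}_{n_{\jone_0}}(\FScale_n(\lambda))$ fails, because this can be far smaller than $\lambda=\ESum_n(\FScale_n(\lambda))$ when $n_{\jone_0}\ll|n|$. Instead one must route through the largest transition point: the defining relation $\FScale_n(\lambda)\,V_\jone((\tilde x_{n,\lambda})_\jone)=E^\jone_{n_\jone}(\FScale_n(\lambda))$ together with $V_\jone\simeq|\cdot|^{2\hpdeg}$ (uniformly in $\jone$) gives $\FScale_n(\lambda)\,(\tilde x_{n,\lambda})_\jone^{2\hpdeg}\simeq E^\jone_{n_\jone}(\FScale_n(\lambda))$ for every $\jone$; taking the maximum over $\jone$ and using $\max_\jone E^\jone_{n_\jone}(\FScale_n(\lambda))\ge\done^{-1}\ESum_n(\FScale_n(\lambda))=\lambda/\done$ yields $\FScale_n(\lambda)\,|\tilde x_{n,\lambda}|_\infty^{2\hpdeg}\gtrsim\lambda$, and the hypothesis (with $\kappa\simeq1$) then propagates this to $|x|_\infty$. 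Apart from this observation, the whole argument is bookkeeping with the one-dimensional bounds already at hand.
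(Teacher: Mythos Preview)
Your proof is correct and follows essentially the same architecture as the paper's: for \eqref{eq:ef_trans} you combine the uniform bound of Remark \ref{rem:weaker-unif} with the transition-point bound of Proposition \ref{eigenfunctions-prp} via a dichotomy on $\lambda^{1/2}t$, exactly as the paper does; for \eqref{eq:ef_exp_joint} you apply the exponential-decay bound to the factor $\jone_0$ achieving $|x|_\infty$, and the uniform bound $\lesssim\lambda^{1/2}$ to the remaining factors.

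The one place where you genuinely deviate is in establishing the key lower bound on the exponent for \eqref{eq:ef_exp_joint}. The paper routes this through the explicit asymptotics of Lemma \ref{lem:transition_est}: it uses \eqref{eq:inv_eigen} to rewrite the exponent as $\simeq |n|^{-\hpdeg}(\lambda^{1/2}|x_{\jone_0}|)^{\hpdeg+1}$, and then \eqref{eq:trans_est2} to conclude $\lambda^{1/2}|x_{\jone_0}|\gtrsim|n|$, whence the exponent is $\gtrsim\lambda^{1/2}|x|_\infty$. Your argument instead bypasses Lemma \ref{lem:transition_est} entirely: from the defining relation $\FScale_n(\lambda)V_\jone((\tilde x_{n,\lambda})_\jone)=E^\jone_{n_\jone}(\FScale_n(\lambda))$, the growth $V_\jone\simeq|\cdot|^{2\hpdeg}$, and the pigeonhole bound $\max_\jone E^\jone_{n_\jone}(\FScale_n(\lambda))\ge\lambda/\done$, you obtain $\FScale_n(\lambda)|\tilde x_{n,\lambda}|_\infty^{2\hpdeg}\gtrsim\lambda$ directly, and then propagate to $|x|_\infty$. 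This is a cleaner and more self-contained argument, since it uses only the definition of transition points and the potential class, not the eigenvalue asymptotics derived from Theorem \ref{titchmarsh-thm}; the paper's route, on the other hand, makes the intermediate $|n|$-dependence explicit, which is consistent with how the subsequent Proposition \ref{keytoweighted-prp} is organised.
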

\begin{proof}
From Proposition \ref{eigenfunctions-prp} it follows that, for all $x_\jone \in \RR$,
\[
|\psi^\jone_{n_\jone,\FScale_n(\lambda)}(x_\jone)|^2 
\lesssim (\tilde x_{n,\lambda})_\jone^{-1/2} ||x_\jone|-(\tilde x_{n,\lambda})_\jone|^{-1/2} 
\]
and also (see Remark \ref{rem:weaker-unif})
\[
|\psi^\jone_{n_\jone,\FScale_n(\lambda)}(x_\jone)|^2 
\lesssim E^{\jone}_{n_\jone}(\FScale_n(\lambda))^{1/4} (\tilde x_{n,\lambda})_\jone^{-1/2}
\leq \lambda^{1/4} (\tilde x_{n,\lambda})_\jone^{-1/2},
\]
whence \eqref{eq:ef_trans} follows. In particular, since $\lambda^{1/2} (\tilde x_{n,\lambda})_\jone \gtrsim 1$ (see \eqref{eq:trans_est1}), we also obtain
\begin{equation}\label{eq:ef_unif}
\lambda^{-1/2} |\psi^\jone_{n_\jone,\FScale_n(\lambda)}(x_\jone)|^2 \lesssim 1.
\end{equation}

Moreover, if $|x_\jone| \geq \kappa (\tilde x_{n,\lambda})_j$, then
 \eqref{eq:inv_eigen} and 
Proposition \ref{eigenfunctions-prp} yield that
\begin{equation}\label{eq:ef_exp}\begin{split}
|\psi^\jone_{n_\jone,\FScale_n(\lambda)}(x_\jone)|^2
&\lesssim E^{\jone}_{n_\jone}(\FScale_n(\lambda))^{1/2} 
 \exp(-\delta |x_\jone| (\FScale_n(\lambda) V_\jone(x_\jone))^{1/2}) \\
&\lesssim \lambda^{1/2}
 \exp(-\delta' |n|^{-\hpdeg} (\lambda^{1/2} |x_\jone|)^{\hpdeg+1}) \\
& \lesssim_N \lambda^{1/2} (|n|^{-\hpdeg} (\lambda^{1/2} |x_\jone|)^{\hpdeg+1})^{-N}
\end{split}\end{equation}
for all $N \in \NN$ (here $\delta'\simeq1$).
In particular, if $|x|_\infty \geq \kappa |\tilde x_{n,\lambda}|_\infty$, then there exist $\jone_0$ so that $|x|_\infty = |x_{\jone_0}|$, and \eqref{eq:trans_est2} gives
\[
\lambda^{1/2} |x_{\jone_0}| = \lambda^{1/2} |x|_\infty \gtrsim \lambda^{1/2} |\tilde x_{n,\lambda}|_\infty \simeq |n|.
\]
therefore
\[
|n|^{-\hpdeg} (\lambda^{1/2} |x_{\jone_0}|)^{\hpdeg+1} \gtrsim \lambda^{1/2} |x|_\infty
\]
and \eqref{eq:ef_exp_joint} follows by combining \eqref{eq:ef_exp} for $\jone =\jone_0$ and \eqref{eq:ef_unif} for $\jone \neq \jone_0$.
\end{proof}

In applying the above estimate \eqref{eq:ef_trans}, we will be interested in controlling a multivariate sum with the corresponding integral (in order to exploit the integrability of the inverse-square-root singularity). The following lemma, giving sufficient conditions for such a control to be valid, explains the importance of the previously discussed ``gap estimates'' (see also \cite[p.\ 1285]{chen_sharp_2013} and \cite[Lemma 4.1]{casarino_grushinsphere}).

\begin{lem}\label{lem:sum_integral}
Let $\kappa \in [1,\infty)$. Let $\Omega \subseteq \RR^d$ be open and convex and $H : \Omega \to \Rpos$ be locally Lipschitz and satisfying
\[
|\nabla H(\vu)| \leq \kappa H(\vu)
\]
for almost all $\vu \in \Omega$. Let $P \subseteq \Omega$ be such that, for some $r \in (0,1]$,
\[
\inf_{\vu \in P} |B_r(\vu) \cap \Omega| \geq \kappa^{-1}
\]
(here $B_r(\vu)$ is the ball centered at $\vu$ of radius $r$) and moreover we can decompose $P = P_1 \cup \dots \cup P_N$ for some $N \leq \kappa$ so that
\[
\inf_{j=1,\dots,\kappa} \inf_{\substack{\vu,\vu' \in P_j\\ \vu\neq \vu' }} |\vu-\vu'| \geq 2r.
\]
Then
\[
\sum_{\vu \in P} H(\vu) \leq e\kappa^3 \int_\Omega H(\vu) \,dx.
\]
\end{lem}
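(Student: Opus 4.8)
The plan is to run the standard ``a pointwise value is controlled by a local average, then pack disjoint balls'' argument, using the differential inequality $|\nabla H|\le\kappa H$ to bound the multiplicative oscillation of $H$ across any ball of radius $r$.

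First, I would record that, since $H>0$ is locally Lipschitz and $|\nabla\log H|=|\nabla H|/H\le\kappa$ almost everywhere on the open convex set $\Omega$, the function $\log H$ is in fact globally $\kappa$-Lipschitz on $\Omega$: the usual mollification/Fubini argument upgrades an almost-everywhere gradient bound to a genuine Lipschitz bound on a convex domain, the convexity being used precisely to ensure that the segment joining any two points of $\Omega$ stays inside $\Omega$. Consequently $H(\vu)\le e^{\kappa|\vu-\vu'|}H(\vu')$ for all $\vu,\vu'\in\Omega$, and in particular $H(\vu)\le e^{\kappa r}H(\vu')$ whenever $\vu'\in B_r(\vu)$. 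Integrating the latter in $\vu'$ over $B_r(\vu)\cap\Omega$ and invoking the volume lower bound $|B_r(\vu)\cap\Omega|\ge\kappa^{-1}$ yields, for every $\vu\in P$,
\[
H(\vu)\ \le\ \frac{e^{\kappa r}}{|B_r(\vu)\cap\Omega|}\int_{B_r(\vu)\cap\Omega}H\ \le\ \kappa\,e^{\kappa r}\int_{B_r(\vu)\cap\Omega}H.
\]

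Next, I would fix $j\in\{1,\dots,N\}$ and exploit the separation hypothesis: as distinct points of $P_j$ lie at distance $\ge 2r$, the open balls $\{B_r(\vu):\vu\in P_j\}$ are pairwise disjoint, so $\sum_{\vu\in P_j}\mathbf{1}_{B_r(\vu)}\le\mathbf{1}_\Omega$, and summing the displayed inequality over $\vu\in P_j$ (Tonelli) gives $\sum_{\vu\in P_j}H(\vu)\le\kappa\,e^{\kappa r}\int_\Omega H$. Summing over the $N\le\kappa$ families then gives $\sum_{\vu\in P}H(\vu)\le N\kappa\,e^{\kappa r}\int_\Omega H$, and keeping track of the numerical constants (one power of $\kappa$ from the volume bound, one from the number of families, and the oscillation factor $e^{\kappa r}$ controlled via $r\le 1$) one arrives at the asserted estimate $\sum_{\vu\in P}H(\vu)\le e\kappa^3\int_\Omega H$.

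I do not anticipate a serious difficulty here: this is essentially a routine covering/packing lemma, and no single step is really an obstacle. The one point that I would expect to deserve the most care is the very first step --- passing from the almost-everywhere inequality $|\nabla H|\le\kappa H$ to the \emph{pointwise} oscillation bound $H(\vu)/H(\vu')\le e^{\kappa|\vu-\vu'|}$ valid for all $\vu,\vu'\in\Omega$ --- which is exactly where the convexity of $\Omega$ and the merely local Lipschitz regularity of $H$ genuinely enter. The point of isolating the statement as this lemma is that, in the next section, it is applied with $P$ a set of (rescaled) vectors of transition points: the ``gap estimate'' \eqref{eq:gap} of Lemma \ref{lem:transition_est} is precisely what ensures that such a $P$ decomposes into boundedly many $2r$-separated families, so that the hypotheses above are met and the sum over eigenvalue indices can be replaced by the corresponding integral.
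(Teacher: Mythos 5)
Your argument is correct and is essentially the paper's own proof: the pointwise comparison $H(\vu)\le e^{\kappa|\vu-\vu'|}H(\vu')$ obtained from the a.e.\ gradient bound plus convexity, the conversion of point values into averages over $B_r(\vu)\cap\Omega$ via the volume lower bound, and the packing of the pairwise disjoint balls within each $2r$-separated family $P_j$. The one caveat is the final constant: your (correct) oscillation factor is $e^{\kappa r}$, so the bookkeeping yields $\sum_{\vu\in P}H(\vu)\le \kappa^{2}e^{\kappa r}\int_\Omega H$, which is \emph{not} $\le e\kappa^{3}\int_\Omega H$ when $\kappa$ is large and $r$ is close to $1$; the paper reaches $e\kappa^{3}$ only by asserting the intermediate bound $H(\vu)\le\kappa e^{|\vu-\vu'|}H(\vu')$, which likewise does not follow from $|\nabla H|\le\kappa H$. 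This discrepancy is shared with the paper and is immaterial for the application (where $\kappa\simeq 1$), but if you want the constant exactly as stated you would need to either restate the conclusion with $\kappa^{2}e^{\kappa r}$ or add a hypothesis such as $\kappa r\lesssim 1$.
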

\begin{proof}
From the differential inequality and the convexity of $\Omega$ we obtain that
\[
H(\vu) \leq \kappa \exp(|\vu-\vu'|) H(\vu')
\]
for all $\vu,\vu' \in \Omega$. The lower bound on distances of points of $P_j$ implies that the sets $B_r(\vu) \cap \Omega$ for $\vu \in P_j$ are pairwise disjoint, so
\begin{multline*}
\sum_{\vu \in P_j} H(\vu) \leq \kappa \sum_{\vu \in P_j} |B_r(\vu) \cap \Omega| H(\vu) \\
\leq \kappa^2 \exp(r) \sum_{\vu \in P_j} \int_{B_r(\vu) \cap \Omega} H(\vu') \,d\vu' \leq e \kappa^2 \int_\Omega H(\vu') \,d\vu'
\end{multline*}
and the conclusion follows since $\sum_{\vu \in P} = \sum_{j=1}^N \sum_{\vu \in P_j}$ and $N\leq \kappa$.
\end{proof}

We can now estimate the density of the ``Plancherel measure'' in \eqref{eq:weighted1} as follows.

\begin{prp}\label{keytoweighted-prp}
Let $\gamma \in [0,\dtwo\hpdeg/2)$. For all $\lambda \in \Rpos$ and $x \in \RR$,
\[
\max\{\lambda^{-1/2},|x|\}^{\dtwo\hpdeg-2\gamma} \sum_{n \in \Npos^\done} \frac{\lambda^{\gamma/\hpdeg+1-(\done+\dtwo)/2}}{\FScale_n(\lambda)^{\gamma/\hpdeg+1-\dtwo/2}} \, |\psi_{n,\FScale_n(\lambda)}(x)|^2 \, \FScale_n'(\lambda) \lesssim_\gamma 1.
\]
\end{prp}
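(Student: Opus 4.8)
The plan is to feed the eigenvalue and transition-point estimates of Lemma~\ref{lem:transition_est} and the eigenfunction bounds of Lemma~\ref{lem:ef_estimates} into the sum, reducing the statement to a ``lattice sum'' bound which is then turned into an integral by means of the separation property \eqref{eq:gap} and Lemma~\ref{lem:sum_integral}. Set $\alpha \defeq \hpdeg\dtwo/2-\gamma$, which is positive by hypothesis, put $w \defeq \lambda^{1/2}x \in \RR^\done$ and $\mu \defeq \max\{1,|w|\}$ (so that $\max\{\lambda^{-1/2},|x|\} = \lambda^{-1/2}\mu$), and write $u_n \defeq \lambda^{1/2}\tilde x_{n,\lambda} \in \Rpos^\done$ for the scaled vector of transition points. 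Using $\FScale_n(\lambda)\simeq \lambda\FScale_n'(\lambda)\simeq \lambda^{1+\hpdeg}|n|^{-2\hpdeg}$ from \eqref{eq:inv_eigen} and $|u_n|\simeq |n|$ from \eqref{eq:trans_est2}, a direct bookkeeping of the exponents shows that the powers of $\lambda$ recombine into the natural normalisation of the eigenfunctions, and the asserted inequality reduces to
\[
\sum_{n\in\Npos^\done} |u_n|^{-2\alpha}\; \lambda^{-\done/2}|\psi_{n,\FScale_n(\lambda)}(x)|^2 \;\lesssim_\gamma\; \mu^{-2\alpha}.
\]

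To handle this sum I would split $\Npos^\done = A\sqcup B$, where $A$ collects those $n$ for which $|x|_\infty\geq \kappa|\tilde x_{n,\lambda}|_\infty$ (with $\kappa$ as in Lemma~\ref{lem:ef_estimates}), that is, $x$ lies in the ``forbidden region'' for every coordinate, and $B$ is the complement. For $n\in A$ one has $|n|\simeq |u_n|\lesssim |w|$, so $A$ is empty unless $|w|\gtrsim 1$ (whence $\mu\simeq |w|$) and $\#A\lesssim \mu^\done$; since moreover $|u_n|^{-2\alpha}\leq 1$ and $\lambda^{-\done/2}|\psi_{n,\FScale_n(\lambda)}(x)|^2\lesssim_N (\lambda^{1/2}|x|)^{-N}$ for every $N$ by \eqref{eq:ef_exp_joint}, choosing $N$ large (say $N=\done+2\alpha$) yields $\sum_{n\in A}|u_n|^{-2\alpha}\lambda^{-\done/2}|\psi_{n,\FScale_n(\lambda)}(x)|^2\lesssim_\gamma \mu^{\done-N}\leq \mu^{-2\alpha}$, as required.

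For the sum over $B$, the bound \eqref{eq:ef_trans} gives $\lambda^{-\done/2}|\psi_{n,\FScale_n(\lambda)}(x)|^2\lesssim g(u_n)$, where
\[
g(u) \defeq \prod_{\jone=1}^\done u_\jone^{-1/2}\,(1+||w_\jone|-u_\jone|)^{-1/2},
\]
so it suffices to bound $\sum_{n\in\Npos^\done} |u_n|^{-2\alpha}g(u_n)$. Here the separation of transition points enters: decomposing $\Npos^\done$ into the $\uK^\done\lesssim 1$ residue classes modulo $\uK$, any two distinct $n,n'$ in the same class satisfy \eqref{eq:gap_ass}, hence $|u_n-u_{n'}|_\infty\gtrsim 1$ by \eqref{eq:gap}. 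By \eqref{eq:trans_est1} all the $u_n$ lie in the region where each coordinate is $\gtrsim 1$ (more precisely $(u_n)_\jone\gtrsim |u_n|^{\hpdeg/(1+\hpdeg)}$), and on a suitable open convex neighbourhood $\Omega$ of this region the function $H(u)\defeq |u|^{-2\alpha}g(u)$ is positive, locally Lipschitz and satisfies $|\nabla H|\lesssim H$ (each factor contributing a bounded logarithmic derivative); thus Lemma~\ref{lem:sum_integral}, applied to each residue class, gives
\[
\sum_{n\in\Npos^\done} |u_n|^{-2\alpha}g(u_n) \;\lesssim\; \int_{\Omega} |u|^{-2\alpha}\prod_{\jone=1}^\done u_\jone^{-1/2}\,(1+||w_\jone|-u_\jone|)^{-1/2}\,du.
\]

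The proof is thereby reduced to showing that this integral is $\lesssim_\gamma \mu^{-2\alpha}$, and this is the step I expect to be the main obstacle. The convergence of the integral at infinity --- where the integrand is comparable to $|u|^{-2\alpha}\prod_\jone u_\jone^{-1}$ --- is exactly what forces the restriction $\gamma<\hpdeg\dtwo/2$ (equivalently $\alpha>0$), as one sees, e.g., via the substitution $u_\jone=e^{t_\jone}$; this already settles the case $|w|\leq 1$, where $\mu=1$. For $|w|>1$ one must extract the precise decay $\mu^{-2\alpha}$, and for this the shape of $\Omega$ and the constraint $(u_n)_\jone\gtrsim |u_n|^{\hpdeg/(1+\hpdeg)}$ cannot be ignored; a convenient device is a dyadic decomposition of each coordinate into the shells $\{||w_\jone|-u_\jone|\simeq 2^{l_\jone}\}$ around the ``ridge'' $u_\jone\approx |w_\jone|$, on whose products the factor $\prod_\jone(1+||w_\jone|-u_\jone|)^{-1/2}$ is comparable to $2^{-(l_1+\dots+l_\done)/2}$, the Euclidean volume is controlled by $\prod_\jone 2^{l_\jone}$, and the remaining $|u|^{-2\alpha}\prod_\jone u_\jone^{-1/2}$ is integrated directly --- the coupling provided by $|u|^{-2\alpha}$ being essential, since a naive coordinate-by-coordinate factorisation is too lossy. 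Summing the resulting series over the multi-indices $(l_1,\dots,l_\done)$ should produce the bound $\mu^{-2\alpha}$, uniformly in $w$, and complete the proof.
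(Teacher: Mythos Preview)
There is a genuine gap in the passage from the $B$-sum to the full sum $\sum_{n\in\Npos^\done}|u_n|^{-2\alpha}g(u_n)$: this enlarged sum is \emph{not} $\lesssim_\gamma\mu^{-2\alpha}$ in general, so no analysis of the corresponding integral can give the desired bound. Take $\done\geq 2$, $w=(0,W,0,\dots,0)$ with $W$ large (so $\mu=W$), and look at the single term $n=(1,\dots,1)$. Then $u_n\simeq(1,\dots,1)$, whence $|u_n|^{-2\alpha}\simeq 1$ and $g(u_n)\simeq(1+W)^{-1/2}\simeq W^{-1/2}$; as soon as $\alpha>1/4$ (which occurs for all sufficiently small $\gamma$, since $\hpdeg\dtwo/2>1/4$) this term alone exceeds $W^{-2\alpha}=\mu^{-2\alpha}$. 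The integral over any convex $\Omega$ containing $u_{(1,\dots,1)}$ inherits the same lower bound, so the dyadic analysis you sketch cannot rescue it. Note that $n=(1,\dots,1)$ lies in $A$, not $B$, so the genuine $B$-sum does not see this term; the loss happens precisely when you drop the $B$-constraint. The constraint $(u_n)_\jone\gtrsim|u_n|^{\hpdeg/(1+\hpdeg)}$ you mention is not the relevant one (the offending point satisfies it); what matters is the $B$-constraint $|u_n|_\infty\gtrsim|w|_\infty$, whose region in $u$-space is the complement of a cube and hence not convex --- there is no choice of convex $\Omega$ in Lemma~\ref{lem:sum_integral} that encodes it.

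The paper circumvents this by \emph{not} applying Lemma~\ref{lem:sum_integral} to all $\done$ coordinates simultaneously. It further partitions the $B$-sum according to a subset $J\subseteq\{1,\dots,\done\}$ (namely those $\jone$ with $(u_n)_\jone\leq 2|w_\jone|$) and an index $\jone_0$ realising $|n|_\infty$. For $\jone\notin J$ the cruder bound $\Phi_{w_\jone}(u_\jone)\lesssim u_\jone^{-1}$ together with \eqref{eq:trans_est1} replaces the integral by an explicit sum in $n_\jone$; Lemma~\ref{lem:sum_integral} is applied only to the $J$-coordinates, with $(n_\jone)_{\jone\notin J}$ held fixed, over the convex box $\prod_{\jone\in J}(c/2,4|w_\jone|)$, whose integral is uniformly bounded by a scaling argument. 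The $B$-constraint enters through the case distinction on $\jone_0$: either $\jone_0\notin J$, and then the outer sum ranges over $n_{\jone_0}\gtrsim|w|$, supplying the factor $\mu^{-2\alpha}$ from $n_{\jone_0}^{-2\alpha-1}$; or $\jone_0\in J$, and then $n_{\jone_0}\simeq|w|$ is pinned, with the same effect. Your decomposition into residue classes modulo $\uK$ and use of \eqref{eq:gap} are exactly what the paper does --- but only on the $J$-block.
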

\begin{proof}
In view of \eqref{eq:inv_eigen},
the estimate to be proved is equivalent to
\begin{equation}\label{eq:sumtocontrol}
\max\{1,\lambda^{1/2} |x|\}^{\varepsilon} \sum_{n \in \Npos^\done} |n|^{-\varepsilon} \, \lambda^{-\done/2} \, |\psi_{n,\FScale_n(\lambda)}(x)|^2 \lesssim_\varepsilon 1,
\end{equation}
where we have set $\varepsilon = \dtwo\hpdeg-2\gamma\in (0,\dtwo\hpdeg]$.

We consider first the part of the sum in \eqref{eq:sumtocontrol} where $|x|_\infty \geq \kappa |\tilde x_{n,\lambda}|_\infty$. By \eqref{eq:trans_est2}, this condition is equivalent to $\lambda^{1/2}|x|\gtrsim |n|$. In particular, it is empty unless $\lambda^{1/2}|x| \gtrsim 1$. Therefore, using \eqref{eq:ef_exp_joint} and the positivity of $\varepsilon$, this part of the sum is controlled by
\[
(\lambda^{1/2} |x|)^{\varepsilon} \sum_{\substack{n \in \Npos^\done\\ \lambda^{1/2}|x|\gtrsim |n|}}  (\lambda^{1/2} |x|)^{-N}
 \lesssim_\varepsilon (\lambda^{1/2} |x|)^{\varepsilon+\done-N} \lesssim_\varepsilon 1,
\]
by choosing $N\geq \done+\varepsilon$.

For the remaining part of the sum, we use a different estimate. 
Namely, from \eqref{eq:ef_trans} it follows that
\[
\lambda^{-1/2} |\psi^\jone_{n_\jone,\FScale_n(\lambda)}(x_\jone)|^2 
\lesssim
\Phi_{\lambda^{1/2} x_\jone}((\lambda^{1/2} \tilde x_{n,\lambda})_\jone),
\]
where
\[
\Phi_t(\vu) = \vu^{-1/2} (1+||t|-\vu|)^{-1/2}
\]
for all $t \in \RR$ and $\vu \in \Rpos$. So we are reduced to proving that
\begin{equation}\label{eq:remainingsum}
\max\{1,\lambda^{1/2} |x|\}^\varepsilon \sum_{\substack{n \in \Npos^\done \\ |x|_\infty \leq \kappa |\tilde x_{n,\lambda}|_\infty}} |n|^{-\varepsilon} 
\prod_{\jone=1}^\done \Phi_{\lambda^{1/2} x_\jone}((\lambda^{1/2} \tilde x_{n,\lambda})_\jone)\lesssim_\varepsilon 1
\end{equation}

It should be noted that, for all $c \in \Rpos$, the derivative $\Phi'_t$ of $\Phi_t$ satisfies
\begin{equation}\label{eq:gradient_estimate}
|\Phi_t'(\vu)| \lesssim_c \Phi_t(\vu)
\end{equation}
uniformly in $\vu \in [c,\infty)$ and $t \in \RR$. Moreover, by \eqref{eq:trans_est1}, there exists $c \in \Rpos$ so that
\[
\lambda^{1/2} (\tilde x_{n,\lambda})_\jone \in [c,\infty)
\]
for all $\lambda \in \Rpos$, $n \in \Npos^\done$ and $j \in \{1,\dots,\done\}$.

For all $J \subseteq \{1,\dots,\done\}$, define
\[
N^x_\lambda(J) = \{n \in \Npos^\done \tc (\tilde x_{n,\lambda})_\jone \leq 2 |x_\jone| \text{ if $\jone \in J$, } (\tilde x_{n,\lambda})_\jone > 2 |x_\jone| \text{ if $j \notin J$}\};
\]
in addition, for all $\jone_0 \in \{1,\dots,\done\}$, define
\[
M(\jone_0) = \{ n \in \Npos^\done \tc |n|_\infty = n_{\jone_0} > \max\{ n_\jone \tc \jone < \jone_0\}\};
\]
then we can split the sum in \eqref{eq:remainingsum} as follows:
\[
\sum_{\substack{n \in \Npos^\done \\ |x|_\infty \leq \kappa |\tilde x_{n,\lambda}|_\infty}} = \sum_{\substack{\jone_0 \in \{1,\dots,\done\} \\J \subseteq \{1,\dots,\done\} }} \sum_{\substack{n \in N_\lambda^x(J) \cap M(\jone_0) \\ |x|_\infty \leq \kappa |\tilde x_{n,\lambda}|_\infty}}.
\]
This splitting is convenient in that it allows us to identify the largest component $n_{\jone_0}$ of the multiindex $n$, as well as to distinguish between the components $(\tilde x_{n,\lambda})_\jone$ of the ``transition vector'' according to whether they stay near ($j\in J$) or far ($j \notin J$) the corresponding $|x_\jone|$.

Let now $J \subseteq \{1,\dots,\done\}$ and $\jone_0 \in \{1,\dots,\done\}$ be given, and set $J^c = \{1,\dots,\done\} \setminus J$. Note that
\[
\Phi_{t}(\vu) \lesssim \vu^{-1}
\]
for all $\vu \in [2|t|,\infty)$, uniformly in $t \in \RR$.
Then, by \eqref{eq:trans_est1},
\begin{equation}\label{eq:remaining_sum_explicit}
\begin{split}
&\max\{1,\lambda^{1/2} |x|\}^\varepsilon \sum_{\substack{n \in N_\lambda^x(J) \cap M(\jone_0) \\ |x|_\infty \leq \kappa |\tilde x_{n,\lambda}|_\infty}} |n|^{-\varepsilon} 
\prod_{\jone=1}^\done \Phi_{\lambda^{1/2} x_\jone}((\lambda^{1/2} \tilde x_{n,\lambda})_\jone)\\
&\lesssim_\varepsilon \max\{1,\lambda^{1/2} |x|\}^\varepsilon \sum_{\substack{n \in N_\lambda^x(J) \cap M(\jone_0) \\ |x|_\infty \leq \kappa |\tilde x_{n,\lambda}|_\infty}} n_{\jone_0}^{-\varepsilon} \prod_{\jone \in J^c} \frac{n_\jone^{-1/(1+\hpdeg)}}{n_{\jone_0}^{\hpdeg/(1+\hpdeg)}} 
\prod_{\jone \in J} \Phi_{\lambda^{1/2} x_\jone}((\lambda^{1/2} \tilde x_{n,\lambda})_\jone) .
\end{split}
\end{equation}

Suppose first that $\jone_0 \notin J$ and define $J' = J^c \setminus \{\jone_0\}$. Then the above quantity is controlled by
\begin{equation}\label{eq:sum_max_no_J}
\begin{split}
&\max\{1,\lambda^{1/2} |x|\}^\varepsilon \sum_{\substack{n_{\jone_0} \in \Npos \\ n_{\jone_0} \gtrsim \lambda^{1/2}|x|}} n_{\jone_0}^{-\varepsilon-1} \sum_{\substack{(n_\jone)_{\jone \in J'} \in \Npos^{J'} \\ n_\jone \leq n_{\jone_0} \,\forall \jone \in J'}} \prod_{\jone \in J'} \frac{n_\jone^{-1/(1+\hpdeg)}}{n_{\jone_0}^{\hpdeg/(1+\hpdeg)}} \\
 &\qquad\times \sum_{\substack{(n_\jone)_{\jone \in J} \in \Npos^J \\ (\lambda^{1/2} \tilde x_{n,\lambda})_\jone \leq 2|\lambda^{1/2} x_\jone|}}
\prod_{\jone \in J} \Phi_{\lambda^{1/2} x_\jone}((\lambda^{1/2} \tilde x_{n,\lambda})_\jone).
\end{split}
\end{equation}
In the above expression, the multiindex $n = (n_\jone)_{\jone\in\{1,\dots,\done\}}$ must be thought of as composed of three ``independent'' parts $n_{\jone_0}$, $(n_\jone)_{\jone \in J'}$ and $(n_\jone)_{\jone \in J}$, which serve as summation (multi)indices of three different sums.

In order to control the inner sum, for each fixed $n_{\jone_0} \in \Npos$ and $(n_\jone)_{\jone \in J'} \in \Npos^{J'}$ we apply Lemma \ref{lem:sum_integral} to the function $H = \bigotimes_{\jone \in J} \Phi_{\lambda^{1/2} x_\jone}$ and the set $P = \{((\lambda^{1/2} \tilde x_{n,\lambda})_\jone)_{\jone \in J}\}_{(n_\jone)_{\jone \in J} \in \Npos^J}$. Note that, if $\uK \in \Npos$ is the constant given by Lemma \ref{lem:transition_est} and we split $P = \bigcup_{m \in \{0,\dots,\uK-1\}^J} P_{m}$, where
\[
P_{m} = \{((\lambda^{1/2} \tilde x_{n,\lambda})_\jone)_{\jone \in J} \tc (n_\jone)_{\jone \in J} \in \Npos^J, \, n_\jone \equiv m_\jone \text{ (mod $\uK$) } \, \forall  \jone \in J \},
\]
then \eqref{eq:gap} implies that
\[
\inf_{\substack{\vu,\vu' \in P_m\\ \vu \neq \vu'}} |\vu-\vu'| \gtrsim 1 
\]
uniformly in $m$, $x$, $(n_\jone)_{\jone \in J^c}$ and $\lambda$. Hence, also in view of \eqref{eq:gradient_estimate}, the assumptions of Lemma \ref{lem:sum_integral} are satisfied if we take $\Omega = \prod_{\jone \in J} (c/2,4|\lambda^{1/2} x_\jone|)$, and 
\eqref{eq:sum_max_no_J} is majorized by
\[\begin{split}
&\max\{1,\lambda^{1/2} |x|\}^\varepsilon \sum_{\substack{n_{\jone_0} \in \Npos \\ n_{\jone_0} \gtrsim \lambda^{1/2}|x|}} n_{\jone_0}^{-\varepsilon-1} \sum_{\substack{(n_\jone)_{\jone \in J'} \in \Npos^{J'} \\ n_\jone \leq n_{\jone_0} \,\forall \jone \in J'}} \prod_{\jone \in J'} \frac{n_\jone^{-1/(1+\hpdeg)}}{n_{\jone_0}^{\hpdeg/(1+\hpdeg)}} \\
 &\qquad\times 
\int_{\prod_{\jone \in J} (0,4|\lambda^{1/2} x_\jone|)} \prod_{\jone \in J} \vu_\jone^{-1/2} |\vu_\jone - |\lambda^{1/2} x_\jone||^{-1/2} \,\prod_{\jone \in J} d\vu_\jone.
\end{split}\]
The last integral is easily seen (by rescaling) to be uniformly bounded in $\lambda$ and $x$. Similarly the inner sum in $(n_\jone)_{\jone \in J'}$ is uniformly bounded in $n_{\jone_0}$, since $\hpdeg > 0$. Finally the outer sum in $n_{\jone_0}$ converges and is majorized by $\max\{1,\lambda^{1/2} |x|\}^{-\varepsilon}$, since $\varepsilon > 0$, which makes the above quantity uniformly bounded in $\lambda$ and $x$ overall.

Suppose now instead that $\jone_0 \in J$. Then, by \eqref{eq:trans_est1} and \eqref{eq:trans_est2}, for all $n \in N_\lambda^x(J) \cap M(\jone_0)$ such that $|x|_\infty \leq \kappa |\tilde x_{n,\lambda}|_\infty$,
\[
\lambda^{1/2} |x| \lesssim \lambda^{1/2} |\tilde x_{n,\lambda}| \simeq |n|_\infty = n_{\jone_0} \simeq \lambda^{1/2} (\tilde x_{n,\lambda})_{\jone_0} \lesssim \lambda^{1/2} |x_{\jone_0}| \leq \lambda^{1/2} |x|,
\]
which shows that all these quantities are actually equivalent (and in particular $\lambda^{1/2} |x| \gtrsim 1$). Therefore in this case the right-hand side of \eqref{eq:remaining_sum_explicit} is controlled by
\[
 \sum_{\substack{(n_\jone)_{\jone \in J^c} \in \Npos^{J^c} \\ n_\jone \lesssim |\lambda^{1/2} x| \,\forall \jone \in J^c}} \prod_{\jone \in J^c} \frac{n_\jone^{-1/(1+\hpdeg)}}{|\lambda^{1/2} x|^{\hpdeg/(1+\hpdeg)}} 
\sum_{\substack{(n_\jone)_{\jone \in J} \in \Npos^J \\ (\lambda^{1/2} \tilde x_{n,\lambda})_\jone \leq 2|\lambda^{1/2} x_\jone|}}
\prod_{\jone \in J} \Phi_{\lambda^{1/2} x_\jone}((\lambda^{1/2} \tilde x_{n,\lambda})_\jone).
\]
Analogously as before, we can apply Lemma \ref{lem:sum_integral} to majorize the inner sum; so we obtain that
 the above quantity is controlled by
\[\begin{split}
& \sum_{\substack{(n_\jone)_{\jone \in J^c} \in \Npos^{J^c} \\ n_\jone \lesssim |\lambda^{1/2} x| \,\forall \jone \in J^c}} \prod_{\jone \in J^c} \frac{n_\jone^{-1/(1+\hpdeg)}}{|\lambda^{1/2} x|^{\hpdeg/(1+\hpdeg)}} \\
 &\qquad\times 
\int_{\prod_{\jone \in J} (0,4|\lambda^{1/2} x_\jone|)} \prod_{\jone \in J} \vu_\jone^{-1/2} |\vu_\jone - |\lambda^{1/2} x_\jone||^{-1/2} \,\prod_{\jone \in J} d\vu_\jone
\end{split}\]
and, similarly as before, both the inner integral and the outer sum are uniformly bounded in $\lambda$ and $x$, because $\hpdeg > 0$.
\end{proof}

Define, for all $R \in \Rpos$, the weight $\weight_R : \RR^{\done+\dtwo} \times \RR^{\done+\dtwo} \to \Rnon$ by
\[
\weight_R(z,z') = \frac{|x|}{\max\{R^{-1},|x'|\}}.
\]
for all $z=(x,y)$ and $z'=(x',y')$ in $\RR^{\done+\dtwo}$.
Combining the above pointwise estimate with Propositions \ref{prp:doubling_gaussian} and \ref{prp:weighted1}
finally yields the weighted Plancherel estimates.

\begin{cor}
Let $\gamma \in [0,\dtwo\hpdeg/2)$. For all $F : \RR \to \CC$ and $z' = (x',y') \in \RR^{\done+\dtwo}$,
\[
\| W^\gamma \Kern_{F(\opL)}(\cdot,z')\|_{L^2(\RR^{\done+\dtwo})}^2 
\lesssim_\gamma \int_0^\infty |F(\lambda)|^2 \, \lambda^{(\done+\dtwo)/2} \min\{\lambda^{1/2},|x'|^{-1}\}^{\dtwo\hpdeg-2\gamma} \, \frac{d\lambda}{\lambda}.
\]
In particular, for all $R \in \Rpos$, if $\supp F \subseteq [R^2/4,4R^2]$, then
\begin{equation}\label{eq:weighted2_final}
\Vol(z', R^{-1}) \, \| \weight_R(\cdot,z')^\gamma \Kern_{F(\opL)}(\cdot,z')\|_{L^2(\RR^{\done+\dtwo})}^2 
\lesssim_\gamma \| F(R^2 \cdot) \|_2^2.
\end{equation}
\end{cor}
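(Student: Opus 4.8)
The plan is to obtain the corollary by combining three facts already established: the weighted Plancherel identity \eqref{eq:weighted1} of Proposition \ref{prp:weighted1}, the pointwise bound for the ``Plancherel density'' in Proposition \ref{keytoweighted-prp}, and the explicit volume formula of Proposition \ref{prp:doubling_gaussian}(ii). No new analytic ingredient is required: the argument is a matter of substituting one estimate into another and keeping track of the exponents.

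First I would specialize Proposition \ref{keytoweighted-prp} to the point $x=x'$ and rearrange it. Since $\max\{\lambda^{-1/2},|x'|\}^{-1}=\min\{\lambda^{1/2},|x'|^{-1}\}$ and $\dtwo\hpdeg-2\gamma\geq 0$ (as $\gamma<\dtwo\hpdeg/2$), it asserts that for every $\lambda\in\Rpos$
\[
\sum_{n \in \Npos^\done} \frac{\lambda^{\gamma/\hpdeg+1}}{\FScale_n(\lambda)^{\gamma/\hpdeg+1-\dtwo/2}}\, |\psi_{n,\FScale_n(\lambda)}(x')|^2\, \FScale_n'(\lambda)
\lesssim_\gamma \lambda^{(\done+\dtwo)/2}\, \min\{\lambda^{1/2},|x'|^{-1}\}^{\dtwo\hpdeg-2\gamma}.
\]
Plugging this bound for the inner sum into \eqref{eq:weighted1} immediately gives the first displayed inequality of the corollary, valid for all bounded Borel $F$.

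For the second inequality I would invoke the support hypothesis $\supp F\subseteq[R^2/4,4R^2]$, which forces $\lambda\simeq R^2$, hence $\lambda^{1/2}\simeq R$, on the support of $F$; therefore $\lambda^{(\done+\dtwo)/2}\simeq R^{\done+\dtwo}$, $\min\{\lambda^{1/2},|x'|^{-1}\}^{\dtwo\hpdeg-2\gamma}\simeq \max\{R^{-1},|x'|\}^{-(\dtwo\hpdeg-2\gamma)}$, and $\int_0^\infty |F(\lambda)|^2\,\lambda^{-1}\,d\lambda\simeq \|F(R^2\cdot)\|_2^2$ (rescale $\lambda=R^2\mu$ and use that $\mu^{-1}\simeq 1$ on the support). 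The first inequality then yields
\[
\|\Mult^\gamma \Kern_{F(\opL)}(\cdot,z')\|_{L^2(\RR^{\done+\dtwo})}^2 \lesssim_\gamma \frac{R^{\done+\dtwo}}{\max\{R^{-1},|x'|\}^{\dtwo\hpdeg-2\gamma}}\,\|F(R^2\cdot)\|_2^2.
\]
Since $\weight_R(z,z')^\gamma=\max\{R^{-1},|x'|\}^{-\gamma}\,|x|^\gamma$, dividing by $\max\{R^{-1},|x'|\}^{2\gamma}$ turns $\Mult^\gamma$ into $\weight_R(\cdot,z')^\gamma$, and multiplying by $\Vol(z',R^{-1})\simeq R^{-(\done+\dtwo)}\max\{R^{-1},|x'|\}^{\hpdeg\dtwo}$ makes all powers of $R$ cancel and leaves the exponent $\hpdeg\dtwo-2\gamma-(\dtwo\hpdeg-2\gamma)=0$ of $\max\{R^{-1},|x'|\}$, giving exactly \eqref{eq:weighted2_final}. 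The computation presents no genuine obstacle; the one point to watch is precisely this cancellation of weight exponents — the power $\hpdeg\dtwo$ coming from the volume of the $\dist$-ball must compensate exactly the combined weight produced by $\weight_R^\gamma$ and by the density estimate, which is, of course, the reason the exponent $\dtwo\hpdeg$ occurs in Proposition \ref{keytoweighted-prp}.
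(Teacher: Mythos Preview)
Your proof is correct and is exactly the combination the paper has in mind: the corollary is stated right after the sentence ``Combining the above pointwise estimate with Propositions \ref{prp:doubling_gaussian} and \ref{prp:weighted1} finally yields the weighted Plancherel estimates,'' and you have simply spelled out that combination. The exponent bookkeeping you flag as the delicate point is precisely the one that makes the argument work, and you have carried it out correctly.
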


\subsection{Proof of Theorem \ref{thm:main}}\label{ss:main}
We are finally able to prove our main result. We need some properties of the weight $\weight_R$. Recall that the homogeneous dimension of the underlying metric measure space is $Q = \done+(1+\hpdeg)\dtwo$.

\begin{lem}
Suppose that $\gamma \in [0,\min\{\done,\hpdeg\dtwo\}/2)$ and $\beta > Q/2-\gamma$. Then
\begin{equation}\label{eq:weight_integral}
\int_{\RR^{\done+\dtwo}} (1+\weight_R(z,z'))^{-2\gamma} (1+R\dist(z,z'))^{-2\beta} \,dz \lesssim_{\beta,\gamma} \Vol(z',R^{-1}).
\end{equation}
for all $R \in \Rpos$ and $z' \in \RR^{\done+\dtwo}$.
Moreover
\begin{equation}\label{eq:weight_dist_estimate}
\weight_R(z,z') \lesssim 1+R\dist(z,z')
\end{equation}
for all $R \in \Rpos$ and $z,z' \in \RR^{\done+\dtwo}$.
\end{lem}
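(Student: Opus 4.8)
The estimate \eqref{eq:weight_dist_estimate} is the elementary one: writing $z=(x,y)$ and $z'=(x',y')$, from $|x|\le|x'|+|x-x'|\le\max\{R^{-1},|x'|\}+|x-x'|$ and dividing by $\max\{R^{-1},|x'|\}\ge R^{-1}$ we get $\weight_R(z,z')\le 1+R|x-x'|$, and since $\dist(z,z')\gtrsim|x-x'|$ by Proposition \ref{prp:doubling_gaussian}(i), this is $\lesssim 1+R\dist(z,z')$. For \eqref{eq:weight_integral} the plan is a dyadic decomposition in the distance. Fix $z'$, let $B_k$ be the $\dist$-ball of centre $z'$ and radius $2^k R^{-1}$ for $k\ge0$, and $B_{-1}=\emptyset$; since $(1+R\dist(z,z'))^{-2\beta}\simeq 2^{-2\beta k}$ on $B_k\setminus B_{k-1}$, it suffices to prove $\sum_{k\ge0}2^{-2\beta k}J_k\lesssim\Vol(z',R^{-1})$, where $J_k=\int_{B_k}(1+\weight_R(z,z'))^{-2\gamma}\,dz$.

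To bound $J_k$ I would set $r=2^kR^{-1}$ and $a=\max\{R^{-1},|x'|\}$ and distinguish two cases. If $r<|x'|$, then on $B_k$ one has $|x-x'|\lesssim r<|x'|$, whence $\max\{r,|x'|\}=|x'|=a$, so the volume formula of Proposition \ref{prp:doubling_gaussian}(ii) gives $J_k\le\Vol(z',r)=2^{k(\done+\dtwo)}\Vol(z',R^{-1})$. If instead $r\ge|x'|$ (so $a\le r$), I would use that $|x|\lesssim|x'|+r\lesssim r$ on $B_k$ and decompose the $x$-range of $B_k$ into $\{|x|\lesssim a\}$ and the dyadic shells $\{|x|\simeq2^ja\}$ with $1\le j$ and $2^ja\lesssim r$: invoking the explicit expression for $\dist$ in Proposition \ref{prp:doubling_gaussian}(i), on each such shell the admissible $x$ fill a set of measure $\lesssim(2^ja)^\done$, while — crucially because $2^ja\lesssim r$, so the second entry of the $\min$ in that expression never forces $|y-y'|$ beyond $\simeq r^{1+\hpdeg}$ — the admissible $y$ fill a set of measure $\lesssim r^{(1+\hpdeg)\dtwo}$, and the weight there is $\simeq2^{-2j\gamma}$; the shell thus contributes $\lesssim2^{j(\done-2\gamma)}a^\done r^{(1+\hpdeg)\dtwo}$, and since $\gamma<\done/2$ the sum over $j$ converges geometrically and is dominated by its top term $j\simeq\log_2(r/a)$, giving $J_k\lesssim(r/a)^{\done-2\gamma}a^\done r^{(1+\hpdeg)\dtwo}=r^{Q-2\gamma}a^{2\gamma}$, where $Q=\done+(1+\hpdeg)\dtwo$. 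Comparing this with $\Vol(z',R^{-1})=(R^{-1})^{\done+\dtwo}a^{\hpdeg\dtwo}$ and using $Q-\done-\dtwo=\hpdeg\dtwo$, $a\ge R^{-1}$ and $2\gamma<\hpdeg\dtwo$ yields $J_k\lesssim2^{k(Q-2\gamma)}\Vol(z',R^{-1})$.

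Finally, inserting these two bounds into $\sum_{k\ge0}2^{-2\beta k}J_k$ and summing the two resulting geometric series in $k$ completes the argument: the series with ratio $2^{\done+\dtwo-2\beta}$ (from the indices $k$ with $2^kR^{-1}<|x'|$) and the one with ratio $2^{Q-2\gamma-2\beta}$ (from the indices with $2^kR^{-1}\ge|x'|$) both converge, since $2\beta>Q-2\gamma$ by hypothesis and $Q-2\gamma>Q-\hpdeg\dtwo=\done+\dtwo$ because $2\gamma<\hpdeg\dtwo$. The only genuinely delicate point is the shell-by-shell volume estimate for $J_k$ in the case $r\ge|x'|$, namely verifying that enlarging $|x|$ up to size $\simeq r$ does not enlarge the $y$-extent of $B_k$ past $\simeq r^{1+\hpdeg}$ in each coordinate; this is exactly what the inequality $2^ja\lesssim r$ secures through the $\min$ in Proposition \ref{prp:doubling_gaussian}(i), and it is there that the full strength of the hypothesis $\gamma<\min\{\done,\hpdeg\dtwo\}/2$ (both the bound $\gamma<\done/2$, for the geometric sum, and $\gamma<\hpdeg\dtwo/2$, for the comparison with $\Vol(z',R^{-1})$) comes into play.
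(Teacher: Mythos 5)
Your argument is correct. Note that the paper itself gives no proof of this lemma beyond the remark that it is ``analogous to \cite[Lemma 12]{martini_grushin_2012} and \cite[Lemma 4.2]{chen_sharp_2013}'', so you have in effect supplied the omitted details; your route (dyadic decomposition in $\dist(z,z')$ combined with a dyadic decomposition in $|x|$, using the explicit formulas for $\dist$ and $\Vol$ from Proposition \ref{prp:doubling_gaussian}) is exactly the standard argument behind those cited lemmas. All the key points check out: the two convergence conditions $2\beta>\done+\dtwo$ and $2\beta>Q-2\gamma$ both follow from the hypotheses $\beta>Q/2-\gamma$ and $2\gamma<\hpdeg\dtwo$, and the shell-by-shell bound on the $y$-extent is correctly justified by $|x|+|x'|\lesssim r$ in the regime $r\geq|x'|$.
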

\begin{proof}
Analogous to \cite[Lemma 12]{martini_grushin_2012} and \cite[Lemma 4.2]{chen_sharp_2013}.
\end{proof}

Recall that $D = \max\{\done+\dtwo,(1+\hpdeg)\dtwo\}$. By standard techniques (see, e.g., \cite[Section 5]{martini_grushin_2012} and references therein), the proof of Theorem \ref{thm:main} reduces to the following weighted $L^1$ estimate for integral kernels $\Kern_{F(\opL)}$ corresponding to compactly supported multipliers $F$.

\begin{prp}
Let $R \in \Rpos$ and $\beta,s \in \Rnon$ with $s > \beta+D/2$. Then, for all bounded Borel functions $F : \RR \to \CC$ supported in $[R^2/4,R^2]$,
\[
\esssup_{z' \in \RR^{\done+\dtwo}} \int_{\RR^{\done+\dtwo}} (1+R\dist(z,z'))^\beta \, |\Kern_{F(L)}(z,z')| \,dz \lesssim_{\beta,s} \|F(R^2 \cdot)\|_{\sobolev{2}{s}}.
\]
\end{prp}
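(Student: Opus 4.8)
The plan is to follow the well-trodden path from the weighted Plancherel estimate \eqref{eq:weighted2_final} to a weighted $L^1$ bound for a compactly supported multiplier, the extra $(1+R\dist(z,z'))^\beta$ decay being produced, as is customary (cf.\ \cite{martini_grushin_2012} and references therein), by the finite propagation speed of the wave propagator $\cos(t\sqrt{\opL})$ (valid for $\opL$ by Proposition \ref{prp:doubling_gaussian} and the results of \cite{robinson_analysis_2008}). Throughout I would fix $\gamma \in [0,\min\{\done,\hpdeg\dtwo\}/2)$ and $\beta' \in (Q/2-\gamma,\infty)$ with $\beta+\beta' < s$; this is possible precisely because $s > \beta+D/2$ and $Q/2-\min\{\done,\hpdeg\dtwo\}/2 = D/2$, so $\beta'$ may be taken as close to $D/2$ as desired. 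Note $D\ge 2$, hence $s > 1$.

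Write $g(\lambda) = F(R^2\lambda^2)$, an even function supported in $\{1/2 \le |\lambda| \le 1\}$ with $\|g\|_{\sobolev{2}{s}} \simeq_s \|F(R^2\cdot)\|_{\sobolev{2}{s}}$ and $F(\opL) = g(R^{-1}\sqrt{\opL})$. First I would split $g = \sum_{l \ge 0} g_l$ by a Littlewood--Paley partition of unity in the Fourier variable, with $\Four g_l$ supported where $|\tau|\simeq 2^l$ (and $|\tau|\lesssim 1$ for $l=0$); since $s>0$ one has $\|g_l\|_2 \lesssim 2^{-ls}\|g\|_{\sobolev{2}{s}}$. Expressing $g_l(R^{-1}\sqrt{\opL})$ through the wave propagator and invoking finite propagation speed shows that $\Kern_{g_l(R^{-1}\sqrt{\opL})}(\cdot,z')$ is supported in $\{z : R\dist(z,z') \lesssim 2^l\}$. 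To be able to apply \eqref{eq:weighted2_final}, which demands spectral localization, I would reinstate a cutoff $\psi(R^{-2}\opL)$ with $\psi \in C^\infty_c$ equal to $1$ on $[1/4,1]$, so that $F(\opL) = \sum_l F_l(\opL)$ with $F_l(\mu) = g_l(R^{-1}\sqrt{\mu})\,\psi(\mu/R^2)$ supported in a fixed interval $[c_0^{-1}R^2, c_0 R^2]$ and $\|F_l(R^2\cdot)\|_2 \lesssim 2^{-ls}\|g\|_{\sobolev{2}{s}}$. Since the kernel of $\psi(R^{-2}\opL)$ is, by the Gaussian bounds of Proposition \ref{prp:doubling_gaussian}, dominated by $\Vol(z',R^{-1})^{-1}(1+R\dist(z,z'))^{-M}$ for every $M$, composing with it only blurs the spatial support of $g_l(R^{-1}\sqrt{\opL})$ on the scale $R^{-1}$: one gets $|\Kern_{F_l(\opL)}(z,z')| \lesssim_M \Vol(z',R^{-1})^{-1}(1+R\dist(z,z'))^{-M}$ whenever $R\dist(z,z') > C 2^l$, and this far region contributes a rapidly convergent series bounded by $\|g\|_{\sobolev{2}{s}}$.

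On the near region $R\dist(z,z') \le C 2^l$ I would use Cauchy--Schwarz with the weight $(1+\weight_R(\cdot,z'))^\gamma$, bounding $\int_{R\dist \le C2^l}(1+R\dist(z,z'))^\beta |\Kern_{F_l(\opL)}(z,z')|\,dz$ by the product of $\big(\int_{R\dist \le C2^l}(1+R\dist)^{2\beta}(1+\weight_R)^{-2\gamma}\,dz\big)^{1/2}$ and $\big(\int (1+\weight_R)^{2\gamma}|\Kern_{F_l(\opL)}(\cdot,z')|^2\,dz\big)^{1/2}$. The first factor is $\lesssim 2^{l(\beta+\beta')}\Vol(z',R^{-1})^{1/2}$, using $(1+R\dist)^{2\beta} \lesssim 2^{2l(\beta+\beta')}(1+R\dist)^{-2\beta'}$ on the region together with \eqref{eq:weight_integral}; the second is $\lesssim \Vol(z',R^{-1})^{-1/2}\|F_l(R^2\cdot)\|_2 \lesssim \Vol(z',R^{-1})^{-1/2}2^{-ls}\|g\|_{\sobolev{2}{s}}$, using $(1+\weight_R)^{2\gamma} \lesssim 1+\weight_R^{2\gamma}$ and \eqref{eq:weighted2_final} with exponents $0$ and $\gamma$ (the slightly enlarged spectral support of $F_l$ being immaterial, the proof of the corollary being insensitive to the dilation constant). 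Thus the $l$-th term is $\lesssim 2^{l(\beta+\beta'-s)}\|g\|_{\sobolev{2}{s}}$; summing over $l \ge 0$, which converges since $\beta+\beta' < s$, and recalling $\|g\|_{\sobolev{2}{s}} \simeq_s \|F(R^2\cdot)\|_{\sobolev{2}{s}}$, yields the assertion.

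I expect the main obstacle to be not any single estimate but the interplay between the two decompositions: the finite-propagation-speed splitting produces pieces whose kernels are essentially ball-supported but whose spectral symbols are spread out, whereas \eqref{eq:weighted2_final} requires the opposite, and the two are reconciled only through the auxiliary cutoff $\psi(R^{-2}\opL)$, whose off-diagonal decay (hence the Gaussian bounds of Proposition \ref{prp:doubling_gaussian}) must be invoked to control the loss of spatial localization. Once this is in place, the remaining delicate point is purely arithmetic: the Sobolev gain $2^{-ls}$ must beat the combined loss $2^{l(\beta+\beta')}$ coming from the volume/distance integral \eqref{eq:weight_integral}, with $\beta'$ driven down to $D/2$ by choosing $\gamma$ close to $\min\{\done,\hpdeg\dtwo\}/2$; this is exactly where the hypothesis $s > \beta + D/2$ is used.
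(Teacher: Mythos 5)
Your proposal is correct in substance and uses exactly the same ingredients as the paper --- the weighted Plancherel bound \eqref{eq:weighted2_final}, the integral estimate \eqref{eq:weight_integral}, Cauchy--Schwarz, and the numerology $Q/2-\min\{\done,\hpdeg\dtwo\}/2=D/2$ --- but it packages them differently. The paper does not redo the finite-propagation-speed machinery: it quotes as a black box the standard weighted $L^2$ estimate $\vvvert \Kern_{F(\opL)}\vvvert_{2,\beta,0,R^{-1}}\lesssim_{\beta,s}\|F(R^2\cdot)\|_{\sobolev{\infty}{s}}$ from \cite{hebisch_functional_1995,duong_plancherel-type_2002,martini_crsphere}, upgrades it to a $\sobolev{2}{s}$ bound with the extra weight $(1+\weight_R)^\gamma$ via \eqref{eq:weight_dist_estimate} and Sobolev embedding, interpolates (in the Mauceri--Meda/Duong--Ouhabaz--Sikora sense) against \eqref{eq:weighted2_final} to trade the full Sobolev regularity for the $\weight_R$ weight, and only then applies a single Cauchy--Schwarz with \eqref{eq:weight_integral}. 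Your scale-by-scale dyadic argument is essentially the proof of that interpolation lemma and of the cited black box carried out by hand, so the two routes are the same proof at different levels of abstraction; yours is more self-contained, the paper's is shorter and defers the genuinely technical part to the literature.

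That technical part is also the one place where your write-up is too thin: the far-region bound for $\Kern_{F_l(\opL)}$ with $F_l=g_l(R^{-1}\sqrt{\cdot\,})\,\psi(\cdot/R^2)$. As stated, the claimed estimate $|\Kern_{F_l(\opL)}(z,z')|\lesssim_M \Vol(z',R^{-1})^{-1}(1+R\dist(z,z'))^{-M}$ cannot be right, since the kernel is linear in $g_l$ and the bound carries no norm of $g_l$; without a factor such as $\|g_l\|_\infty\lesssim 2^{l(1/2-s)}\|g\|_{\sobolev{2}{s}}$ the sum over $l$ of the far contributions would not converge. Moreover, proving the corrected bound for the composition $g_l(R^{-1}\sqrt{\opL})\circ\psi(R^{-2}\opL)$ requires more than ``blurring of supports'': one needs, e.g., on-diagonal bounds for spectral projectors of $\opL$ (available from the Gaussian heat kernel bounds of Proposition \ref{prp:doubling_gaussian}) to control $\|\Kern_{g_l(R^{-1}\sqrt{\opL})}(z,\cdot)\|_{L^2}$, since $g_l$ itself is not compactly supported. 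This is exactly the content of the results the paper cites, so it is a gap of justification rather than of mathematics, but it should be either worked out or referenced explicitly. The remaining steps (the choice of $\gamma$ and $\beta'$, the near-region Cauchy--Schwarz, the insensitivity of \eqref{eq:weighted2_final} to the precise dilate of the spectral support, and the summation under $s>\beta+D/2$) are all correct.
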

\begin{proof}
It is convenient to introduce, for all $p \in [1,\infty]$ and $\beta,\gamma \in \Rnon$ and $r \in \Rpos$, the following weighted $L^p$ norm for integral kernels $K : \RR^{\done+\dtwo} \times \RR^{\done+\dtwo} \to \CC$:
\begin{multline*}
\vvvert K \vvvert_{p,\beta,\gamma,r} \\
= \esssup_{z' \in \RR^{\done+\dtwo}} \Vol(z',r)^{1/p'} \| (1+\dist(\cdot,z')/r)^\beta (1+\weight_{r^{-1}}(\cdot,z'))^\gamma K(\cdot,z') \|_{L^p(\RR^{\done+\dtwo})}.
\end{multline*}
Here $p' = p/(p-1)$ denotes the conjugate exponent to $p$.

By Proposition \ref{prp:doubling_gaussian}, $\RR^{\done+\dtwo}$ with the Lebesgue measure and the distance $\dist$ is a doubling space, and $\opL$ satisfies Gaussian-type heat kernel bounds, hence standard multiplier results (see, e.g., \cite{hebisch_functional_1995}, \cite{duong_plancherel-type_2002} or \cite[Theorem 6.1]{martini_crsphere}) apply to $\opL$ and yield the following weigthed $L^2$ estimate:
\[
\vvvert \Kern_{F(\opL)} \vvvert_{2,\beta,0,R^{-1}} \lesssim_{\beta,s} \| F(R^2 \cdot) \|_{\sobolev{\infty}{s}}
\]
for all $R \in \Rpos$, $F : \RR \to \CC$ with $\supp F \subseteq [-4R^2,4R^2]$ and $s > \beta \geq 0$; together with \eqref{eq:weight_dist_estimate} and Sobolev embedding, this implies that
\[
\vvvert \Kern_{F(\opL)} \vvvert_{2,\beta,\gamma,R^{-1}} \lesssim_{\beta,\gamma,s} \| F(R^2 \cdot) \|_{\sobolev{2}{s}}
\]
for all $R \in \Rpos$, $F : \RR \to \CC$ with $\supp F \subseteq [-4R^2,4R^2]$, $\beta,\gamma \in \Rnon$ and $s > \beta +\gamma+1/2$.

Note that \eqref{eq:weighted2_final} can be rewritten as
\[
\vvvert \Kern_{F(\opL)} \vvvert_{2,0,\gamma,R^{-1}} \lesssim_\gamma \| F(R^2 \cdot) \|_2
\]
for all $R \in \Rpos$, $F : \RR \to \CC$ with $\supp F \subseteq [R^2/4,4R^2]$ and $\gamma \in [0,\dtwo\hpdeg/2)$. Interpolation of the last two estimates (cf.\ \cite[proof of Lemma 1.2]{mauceri_vectorvalued_1990} or \cite[proof of Lemma 4.3]{duong_plancherel-type_2002}) yields
\[
\vvvert \Kern_{F(\opL)} \vvvert_{2,\beta,\gamma,R^{-1}} \lesssim_{\beta,\gamma,s} \| F(R^2 \cdot) \|_{\sobolev{2}{s}}
\]
for all $R \in \Rpos$, $F : \RR \to \CC$ with $\supp F \subseteq [R^2/4,4R^2]$, $\gamma \in [0,\dtwo\hpdeg/2)$ and $s > \beta \geq 0$. This estimate, together with \eqref{eq:weight_integral} and the Cauchy--Schwarz inequality implies
\[
\vvvert \Kern_{F(\opL)} \vvvert_{1,\beta,0,R^{-1}} \lesssim_s \| F(R^2 \cdot) \|_{\sobolev{2}{s}}
\]
for all $R \in \Rpos$, $F : \RR \to \CC$ with $\supp F \subseteq [R^2/4,4R^2]$, $\beta \in \Rnon$ and $s > \beta + (Q-\min\{\done,\dtwo\hpdeg\})/2 = \beta+D/2$.
\end{proof}

\section*{Appendix. Proof of Theorem \ref{titchmarsh-thm}}

Theorem $3.5$ of \cite{berezin-shubin} states that the eigenfunction $\psi_n^V$ associated to the eigenvalue $E_n^V$ has exactly $n-1$ zeros. Our task is thus reduced to counting the zeros of any nontrivial $L^2$ solution of $u''(x)=(V(x)-E_n^V)u(x)$ on $(0,+\infty)$ 
(and the analogous problem on $(-\infty,0)$). A standard argument in Sturm--Liouville theory implies that such a function $u$ \emph{cannot have zeros outside the classical region $(0,x_n^{V,+})$}: indeed, if $x_1 \geq x_n^{V,+}$ is any such zero, without loss of generality $u'(x_1)>0$ and $u(x_2), u'(x_2)>0$ if $x_2$ is slightly to the right of $x_1$; using $V(x)-E_n^V>0$ on $[x_2,+\infty)$ one can easily conclude that $u$ diverges at $+\infty$, contradicting square-integrability. Thus, it is enough to estimate
\[
Z \defeq \text{number of zeros of $u$ on $(0,x_n^{V,+})$}.
\] 
In order to do that, we introduce the additional point $y_n^{V,+}$ defined as the unique solution of $\frac{V'(y)}{(E_n^V-V(y))^{3/2}}=\frac{1}{\pi}$ in $(0,x_n^{V,+})$. By Sturm's comparison theorem \cite[Section 5.2]{titchmarsh}, the number of zeros of $u$ on $[y_n^{V,+},x_n^{V,+})$ is bounded by the maximal number of zeros of a nontrivial solution of $v''(x)=(V(y_n^{V,+})-E_n^V)v(x)$ on the same interval. Since
\begin{equation}\label{titchmarsh-diff}
x_n^{V,+}-y_n^{V,+}\leq \frac{1}{V'(y_n^{V,+})}\int_{y_n^{V,+}}^{x_n^{V,+}}V' = \frac{E_n^V-V(y_n^{V,+})}{V'(y_n^{V,+})} = \frac{\pi}{\sqrt{E_n^V-V(y_n^{V,+})}},
\end{equation}
such a solution $v$ can have at most one zero in $[y_n^{V,+},x_n^{V,+})$. Thus $\left|Z-Z'\right|\leq 1$, where
\[
Z' \defeq \text{number of zeros of $u$ on $(0,y_n^{V,+})$}.
\]

Now define $Q(x) \defeq \sqrt{E_n^V-V(x)}$ and $\rho(x) \defeq \sqrt{u'(x)^2 + Q(x)^2u(x)^2}$ for all $x\in(0,x_n^{V,+})$, and let $\theta:(0,x_n^{V,+})\rightarrow\RR$ be a $C^1$ function such that
\begin{equation}\label{titchmarsh-theta}
e^{i\theta} = \frac{u'+ iQu}{\rho}.
\end{equation}
The fact that the phase function $\theta$ is defined up to integer multiples of $2\pi$ has no consequences in what follows. Differentiating \eqref{titchmarsh-theta} and taking real parts yields
\begin{equation}\label{titchmarsh-theta'}
\theta' = Q + \frac{Q'uu'}{\rho^2}.
\end{equation}
By combining \eqref{titchmarsh-theta} and \eqref{titchmarsh-theta'}, we see that $u(x)=0$ if and only if $\theta(x)\in \pi\ZZ$, and in those points $\theta'(x)>0$. This immediately implies that
\[
\left|Z'- \frac{1}{\pi}\int_0^{y_n^{V,+}}\theta'\right|\leq1.
\]
Using \eqref{titchmarsh-theta'} again, we find
\[
\left|Z'- \frac{1}{\pi}\int_0^{x_n^{V,+}}Q\right|\leq1 + \frac{1}{\pi}\int_{y_n^{V,+}}^{x_n^{V,+}}Q + \left|\frac{1}{\pi}\int_0^{y_n^{V,+}}\frac{Q'uu'}{\rho^2}\right|.
\]
Estimate \eqref{titchmarsh-diff} gives $\frac{1}{\pi}\int_{y_n^{V,+}}^{x_n^{V,+}}Q\leq 1$. By squaring and taking imaginary parts of both sides of \eqref{titchmarsh-theta}, we obtain the identity 
\begin{equation}\label{titchmarsh-im}
\frac{Q'uu'}{\rho^2}=\frac{\sin(2\theta)Q'}{2Q}.
\end{equation}
Putting our estimates together gives
\begin{equation}\label{titchmarsh-intermediate}
\left|Z- \frac{1}{\pi}\int_0^{x_n^{V,+}}Q\right|\leq  3 + \left|\frac{1}{\pi}\int_0^{y_n^{V,+}}\frac{\sin(2\theta)Q'}{2Q}\right|.
\end{equation}
To bound the last integral, we combine \eqref{titchmarsh-theta'} and \eqref{titchmarsh-im} to get
\[
\int_0^{y_n^{V,+}}\frac{\sin(2\theta)Q'}{2Q} = \int_0^{y_n^{V,+}}\frac{\sin(2\theta)Q'}{2Q^2}\theta' - \int_0^{y_n^{V,+}}\frac{\sin(2\theta)^2|Q'|^2}{4Q^3}.
\]
We treat the two integrals separately. We have
\[
\int_0^{y_n^{V,+}}\frac{\sin(2\theta)Q'}{2Q^2}\theta'  = \frac{1}{4}\int_0^{y_n^{V,+}}(\cos(2\theta))'\left(\frac{1}{Q}\right)',
\]
where $\left(\frac{1}{Q}\right)'=\frac{V'}{2(E_n^V-V)^{3/2}}$ is monotone and takes values in $(0,\frac{1}{2\pi})$ on the interval of integration. By the second mean value theorem for Riemann--Stieltjes integrals, there exists $\bar{x}\in (0,y_n^V)$ such that
\[
\frac{1}{4}\left|\int_0^{y_n^{V,+}}(\cos(2\theta))'\left(\frac{1}{Q}\right)'\right|=\frac{1}{4}\left|\frac{1}{2\pi}\int_{\bar{x}}^{y_n^{V,+}}(\cos(2\theta))'\right|\leq \frac{1}{4\pi}.
\]
The second integral is bounded as follows:
\[\begin{split}
0
&\leq \int_0^{y_n^{V,+}}\frac{\sin(2\theta)^2|Q'|^2}{4Q^3}\leq \int_0^{y_n^{V,+}}\frac{|Q'|^2}{4Q^3}=\frac{1}{16}\int_0^{y_n^{V,+}}\frac{V'}{(E_n^V-V)^{\frac{5}{2}}}V'\\
&= \frac{1}{24}\int_0^{y_n^{V,+}}((E_n^V-V)^{-{3/2}})' \, V'\\
&= -\frac{1}{24}\int_0^{y_n^{V,+}}(E_n^V-V)^{-{3/2}} \, V'' + (E_n^V-V(y_n^{V,+}))^{-{3/2}} \,V'(y_n^{V,+})\\
&\leq \frac{1}{\pi},
\end{split}\]
where we used an integration by parts. Altogether, we proved
\[
\frac{1}{\pi}\left|\int_0^{y_n^{V,+}}\frac{\sin(2\theta)Q'}{2Q}\right|\leq \frac{5}{4\pi^2}.
\]
Recalling the definition of $Z$ and \eqref{titchmarsh-intermediate}, we have
\[
\left|\text{number of zeros of $\psi_n^V$}-\frac{1}{\pi}\int_{-x_n^{V,+}}^{x_n^{V,+}}\sqrt{E_n^V-V}\right|\leq 2\left(3 + \frac{5}{4\pi^2}\right)+1, 
\]
as we wanted (the summand $1$ comes from the possible zero at $x=0$).

%\bibliographystyle{amsabbrvalphanomr}
%\bibliography{grushin}

\providecommand{\bysame}{\leavevmode\hbox to3em{\hrulefill}\thinspace}

\end{document}